\title{Random walk on sphere packings and Delaunay triangulations in arbitrary dimension}
\author{Ahmed Bou-Rabee}
\address{
	Ahmed Bou-Rabee \\
	Department of Mathematics \\
	University of Pennsylvania \\
	David Rittenhouse Laboratory \\
	209 South 33rd Street \\
	Philadelphia, PA 19104 \\
	Email: ahmedmb@gmail.com
}
\author{Ewain Gwynne}
\address{
	Ewain Gwynne \\
	Department of Mathematics \\
	University of Chicago \\
	Eckhart Hall \\
	5734 S University Ave \\
	Chicago IL, 60637 \\
	Email: ewain@uchicago.edu
}
\subjclass[2020]{Primary 60G50; Secondary 60K37, 65M08}
\numberwithin{equation}{section}
\DeclareMathOperator{\R}{\mathbb{R}}
\DeclareMathOperator{\Z}{\mathbb{Z}}
\DeclareMathOperator{\N}{\mathbb{N}}
\DeclareMathOperator{\diam}{{\mathop{diam}}}
\DeclareMathOperator{\conv}{\mathop{\bf conv}}
\newcommand{\eg}{{\it e.g.}}
\newcommand{\ie}{{\it i.e.}}
\newtheorem{theorem}{Theorem}[section]
\newtheorem{prop}[theorem]{Proposition}
\newtheorem{lemma}[theorem]{Lemma}
\newtheorem{remark}[theorem]{Remark}
\newtheorem{problem}[theorem]{Problem}
\newtheorem{theoremA}{Theorem}[section]
\newtheorem{assumptionD}{Assumption}
\renewcommand*{\P}{\mathbb P}
\renewcommand{\div}{\mathop{\bf div}}
\newcommand*{\Zd}{\ensuremath{\mathbb{Z}^d}}
\newcommand*{\Rd}{\ensuremath{\mathbb{R}^d}}
\newcommand{\G}{\mathcal{G}}
\newcommand{\V}{\mathcal{V}}
\newcommand{\E}{\mathcal{E}}
\newcommand{\eps}{\varepsilon}
\newcommand{\CLap}{\boldsymbol{\Delta}}
\newcommand{\Cnabla}{\boldsymbol{\nabla}}
\newcommand{\projone}{\boldsymbol{\mathrm{proj}^{(1)}}}
\newcommand{\projj}{\boldsymbol{\mathrm{proj}^{(j)}}}
\newcommand{\aDelta}{\hyperref[eq:geometric-laplacian]{{\Delta^{\mathcal{G}}_{\cond}}}}
\newcommand{\cond}{\hyperref[eq:geometric-laplacian]{\mathbf{a}}}
\newcommand{\anabla}{\hyperref[eq:anabla]{{\nabla^{\mathcal{G}}_{\cond}}}}
\newcommand{\D}{\hyperref[eq:dir.energy]{\mathcal{D}}}
\newcommand{\GCU}{\hyperref[eq:graph-closure]{\widehat{U}}}
\newcommand{\Qe}{\hyperref[eq:qeset]{Q_e}}
\newcommand{\Lyone}{\hyperref[eq:line-project]{L_y^{(1)}}}
\newcommand{\Lyj}{\hyperref[eq:line-project]{L_y^{(j)}}}
\DeclareMathOperator{\vol}{\mathbf{vol}}
\DeclareMathOperator{\dvol}{\mathbf{d vol}}
\newcommand{\B}{\mathbb{B}}
\DeclareMathOperator{\hm}{\mathop{hm}}
\DeclareMathOperator{\hmn}{\mathop{hm^{n}}}
\DeclareMathOperator{\dPr}{\mathop{d^{Pr}}}
\newcommand{\GVor}{\mathcal{G}^{\mathrm{Vor}}}
\newcommand{\VVor}{\mathcal{V}^{\mathrm{Vor}}}
\newcommand{\EVor}{\mathcal{E}^{\mathrm{Vor}}}
\begin{document}

\begin{abstract}
We prove that random walks on a family of tilings of~$d$-dimensional Euclidean space, with a canonical choice of conductances, converge to Brownian motion modulo time parameterization. 
This class of tilings includes Delaunay triangulations (the dual of Voronoi tesselations)
and sphere packings. 
Our regularity assumptions are deterministic and mild. For example, our results apply to Delaunay triangulations with vertices sampled from a $d$-dimensional Gaussian multiplicative chaos measure. 
As part of our proof, we establish the uniform convergence of certain finite volume schemes for the Laplace equation, with quantitative bounds on the rate of convergence. In the special case of two dimensions, we give a new, short proof of the main result of~\cite{gurel2020dirichlet}.
\end{abstract}
\maketitle 

\vspace{-0.6in}

\tableofcontents

\section{Introduction}
\label{sec:intro}

\begin{figure}
	\includegraphics[width=0.79\textwidth]{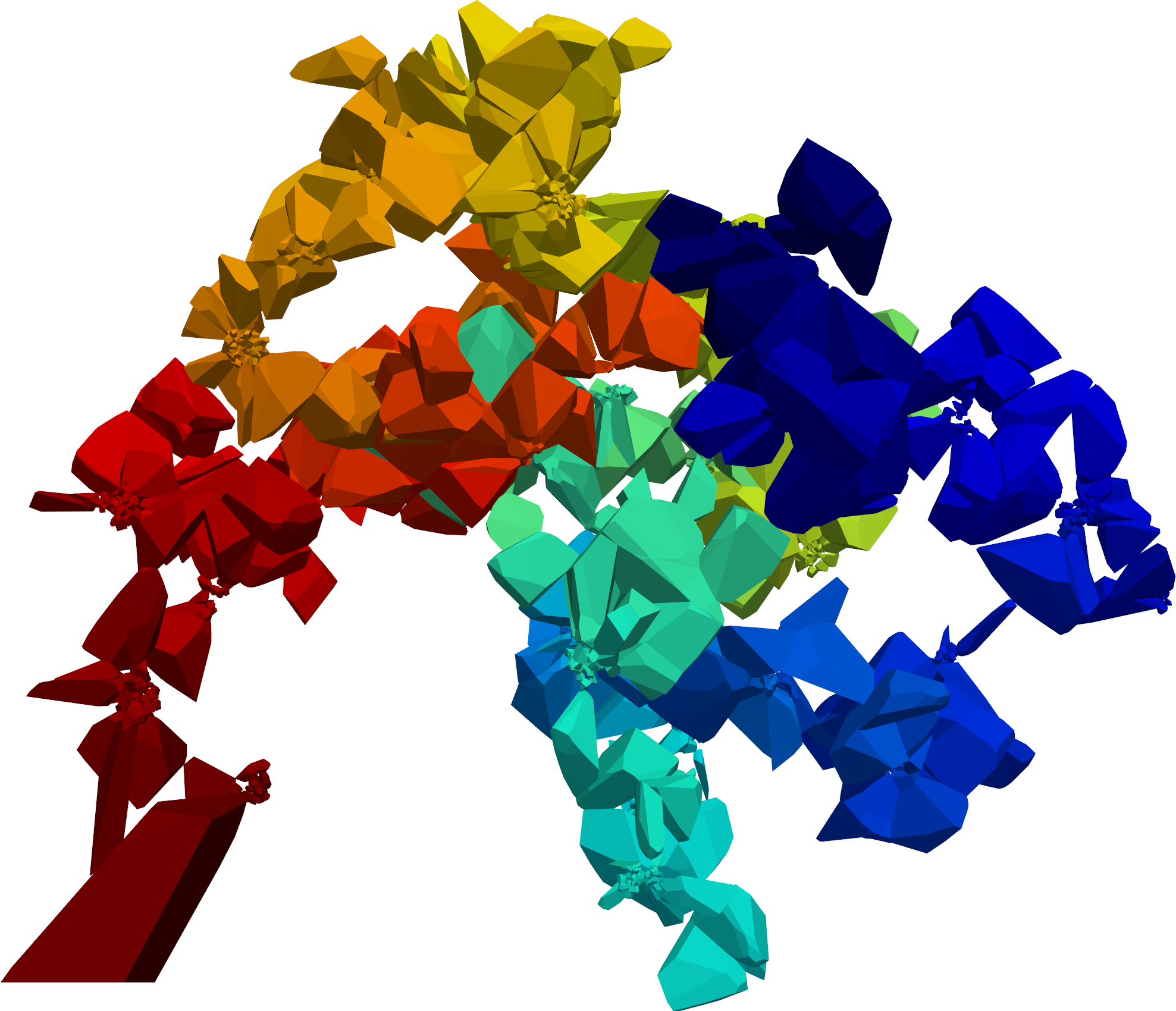}
	\caption{The trace of random walk on the Voronoi tessalation of $2^{19}$ points sampled from a Gaussian multiplicative chaos measure on a cube with parameter~$\gamma =1$. The random walk is stopped upon exiting the cube and the cells of the tessalation are colored according to the order in which they are visited.}
\end{figure}

In this paper, we consider a class of graphs embedded in $\Rd$, called \emph{orthogonal tilings}, which come equipped with a canonical choice of conductances on their edges (defined in Section~\ref{subsec:main-results}). This class includes tangency graphs of sphere packings as well as $d$-dimensional Delaunay triangulations (the dual graphs of Voronoi tesselations). We show that under nearly minimal regularity assumptions, random walks on these graphs converge to Brownian motion modulo time change, equivalently, discrete harmonic functions converge to their continuum counterparts (Theorems~\ref{theorem:convergence-of-rw-trace} and~\ref{theorem:convergence-of-the-dirichlet-problem}). Several motivations drive this work, and these are discussed in detail in Sections~\ref{subsec:prior} and~\ref{subsec:universality}; to summarize:
\begin{itemize}
\item Orthogonal tilings in~$d=2$ have been studied in part to establish universality of statistical mechanics models, see, for example, the ICM survey,~\cite{smirnovicm}. Our work may be viewed as the first step in this program in~$d \geq 3$. 
\item We obtain a higher-dimensional analog of the results of~\cite{gurel2020dirichlet}, which gives the convergence of discrete harmonic functions on planar orthodiagonal lattices to their continuum counterparts  (see also~\cite{MR1692623},~\cite{chelkaksmirnov}, \cite{skopenkov2013boundary}, and~\cite{werness2015discrete} for earlier results). In the special case when $d=2$, we also obtain a short new proof of the main result of~\cite{gurel2020dirichlet}. 
\item Theorem~\ref{theorem:convergence-of-the-dirichlet-problem} can be viewed as a uniform convergence statement for a class of finite volume approximation schemes for the Dirichlet problem. Such approximation schemes are widely used in numerical analysis, see, for example the textbook~\cite{eymard2000finite}. For the class of approximation schemes considered in this paper, convergence was previously only known in $L^2$, which is not sufficient to get convergence 
of the associated random walk. 
\item Theorems~\ref{theorem:convergence-of-rw-trace} and~\ref{theorem:convergence-of-the-dirichlet-problem} apply to highly inhomogeneous tilings, including certain discretizations of higher-dimensional analogs of Liouville quantum gravity (LQG). The statement that random walk on such discretizations converges to Brownian motion modulo time change can be viewed as a higher-dimensional analog of scaling limit results for random walk on discretizations of LQG in two dimensions~\cite{GMS-Tutte, GMSInvariance}, although the setup and proofs in the present paper are quite different.  
\item Our results provide some evidence that sphere packings and Delaunay triangulations are discrete analogs of conformally flat Riemannian metrics, which suggests that such graphs could possibly play a role analogous to planar maps in higher-dimensional random conformal geometry.  
\end{itemize}

\subsection{Main results}
\label{subsec:main-results}

Let~$\mathcal{D}$ be a subset of~$\Rd$, cover~$\mathcal{D}$ by closed, convex polytopes~$\{P_v\}$ with disjoint interiors, and associate each polytope~$P_v$ with a vertex~$v$ in its interior.  Denote the set of such vertices by~$\V$ and declare that $v,w \in \V$ are joined by an edge if $P_v \cap P_w$ is a $(d-1)$-dimensional facet. Assume that each compact subset of~$\mathcal{D}$ intersects finitely many polytopes. We call the resulting graph $\G = (\V,\E)$ a \emph{tiling graph} of~$\mathcal{D}$.  An~\emph{orthogonal} tiling graph satisfies the additional property that for each edge~$(w,v) \in \E$ the vector~$(w-v)$ is orthogonal to the hyperplane containing~$P_w \cap P_v$. 
As we observe in Section~\ref{subsec:natural-examples} below, examples of orthogonal tilings include Voronoi tessellations and tangency graphs of sphere packings.

In this paper we investigate the large scale behavior of random walk on orthogonal tilings. The random walk~$\{X_t\}_{t \geq 0}$ we consider is discrete time and time homogeneous with transition probabilities~$p$ determined by the geometry of the graph: for an edge $(w,v) \in \E$, we set 
\begin{equation} \label{eq:conductance} 
p(w,v) := \frac{\cond(w,v)}{\cond(w)},   \,  \, \,    \text{where}  \,  \, \,   
\cond(w,v) :=  \frac{\vol_{d-1}(P_w \cap P_v)}{\|w - v\| } \,  \, \,  \, \text{and} \,  \, \,  \, \cond(w) := \sum_{(w,v) \in \E} \cond(w, v) .
\end{equation}
Here, $\vol_{d-1}$ denotes $d-1$-dimensional Lebesgue measure. See Figure~\ref{fig:polytope-neighbors}.
\begin{figure}
	\includegraphics[width=0.4\textwidth]{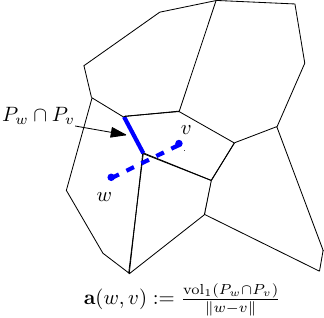}
	\caption{A polytope~$P_v$ and some of its neighbors in an orthogonal tiling. The edge between~$v$ and~$w$ (blue dots) is a blue dotted line and the facet~$P_w \cap P_v$ is a thick blue line. The conductance~$\cond(w,v)$ is the length of the facet divided by the Euclidean distance~$\|w-v\|$.} \label{fig:polytope-neighbors}
\end{figure}

 These probabilities ``tilt" the random walk so that its Euclidean position is a discrete-time martingale, see Lemma~\ref{lemma:linear-lie-in-kernel} and~\cite[Proposition 6.4]{MR4425348}. In fact, simple random walks on orthogonal tilings do not necessarily converge to Brownian motion modulo time change, even under very strong regularity assumptions --- see Theorem~\ref{theorem:counter-example} below.

The main result of this paper gives the convergence of $\{X_t\}_{t\geq 0}$ to Brownian motion modulo time parametrization. 
We recall the definition of the topology on curves modulo time parametrization from~\cite[Section 2]{AB1999}.
Let~$T_1,T_2 > 0$ and let~$\phi_1: [0,T_1] \to \Rd$ and~$\phi_2:[0,T_2] \to \Rd$ be two continuous 
curves. We define 
\begin{equation}
	\label{eq:cmp-topology}
	d(\phi_1, \phi_2) := \inf_{\psi} \sup_{t \in [0,T_1]} | \phi_1(t) - \phi_2(\psi(t))| \, , 
\end{equation}
where the infimum is taken over all increasing homeomorphisms~$\psi:[0,T_1] \to [0,T_2]$.

\begin{theoremA}[Convergence of random walk] \label{theorem:convergence-of-rw-trace}
	Let~$U \subset \Rd$ be a bounded, Lipschitz \footnote{We need to assume the domain is Lipschitz because we need that Brownian motion started near the boundary exits the domain close to its starting point. Without this condition, the random walk could exit the domain much earlier than the Brownian motion.} domain such that~$\overline{U}\subset \mathcal{D}$ and let~$\{\G_n\} = \{(\V_n, \E_n)\}$ 	be a sequence of orthogonal tilings of~$\mathcal{D}$. For~$z \in \mathcal{D}$, let~$z^{n}$ be the nearest
	vertex in~$\G_n$ to~$z$ with ties broken in lexicographical ordering. Assume that  
	\begin{equation}
		\label{eq:smallness}
	 \lim_{n \to \infty} \left( \eps_n + \sup_{z \in \mathcal{D}} \|z^{n} - z\| \right) = 0  , \quad \text{where} \quad
	 \eps_n := \sup_{v \in \V_n} \diam(P_v)  
	\end{equation}
	and at least one of the following three conditions is satisfied:
	\begin{enumerate}[label = (\Roman*)]
		\item planarity,~$d = 2$; or \label{item:planarity}
		\item the volume of the smallest tile in~$\G_n$ is at least~$exp( -o(\eps_n^{-1}))$; or \label{item:vol}
		\item there exists $\alpha> 0$ such that the diameter of each tile~$P_v$ in~$\G_n$ is at most \\ $O(\max_{(v,w) \in \E_n} \|w-v\|^{\alpha} \vol_{d-1}(P_w \cap P_v)^{\alpha})$, where the max is over all edges incident to $v$. 
		  \label{item:diam}
	\end{enumerate}
	Then, for each~$z \in U$, as~$n \to \infty$, the linearly interpolated random walk~$\{X_t\}$
	started at~$z^{n}$ and stopped upon exiting~$U$ 
	converges in law to standard Brownian motion started at~$z$ and stopped upon exiting~$U$ with respect to the metric on curves viewed modulo time parametrization~\eqref{eq:cmp-topology}. Moreover, the convergence is uniform over all choices of $z$.
\end{theoremA}

The hypothesis~\eqref{eq:smallness} is close to necessary for the conclusion of Theorem~\ref{theorem:convergence-of-rw-trace} to hold. 
The hypotheses~\ref{item:vol} and~\ref{item:diam} are quite mild and are true for essentially any models one might be interested in, even discretizations of rough, fractal geometries (see Proposition~\ref{prop:min-max-edge-length}). However, we do not know whether it is necessary to have one of these two hypotheses for $d\geq 3$ (see Problem~\ref{problem:hypotheses}).

Hypothesis~\ref{item:vol} can be replaced by a slightly weaker hypothesis concerning the diameters of the connected components of the set of small tiles of $\G_n$, see Theorem~\ref{theorem:generaldim}. This weakened hypothesis is similar to~\cite[Assumption 1.2]{CLM}. We emphasize that in Hypothesis~\ref{item:diam}, we allow any $\alpha > 0$. For $\alpha \not=1/d$, the ratio $\|w-v\|^{\alpha} \vol_{d-1}(P_w \cap P_v)^{\alpha} / \mathrm{diam}(P_v)$ is not scale invariant, so the bound we require is much weaker than a uniform control over the geometry of the tiling at all scales. 
 \smallskip

The macroscopic features of a discrete time Markov process are closely tied to the large scale behavior of its generator, see, \eg, \cite[Theorem 19.25]{MR4226142}. The generator of~$X_t$ is
\begin{equation}
	\label{eq:geometric-laplacian}
	\aDelta h(v) = \sum_{(w,v) \in \E}  \cond(w,v)( h(w) - h(v))
	\quad \mbox{where}  \quad 	\cond(w,v) :=  \frac{\vol_{d-1}(P_w \cap P_v)}{\|w - v\| } , \, 
\end{equation}
as in~\eqref{eq:conductance}.
Theorem~\ref{theorem:convergence-of-rw-trace} is roughly equivalent to the statement that~$\aDelta$-harmonic functions are approximated uniformly, at large scales, by continuum harmonic functions. 

In what follows, we write~$\V_n[U]$ and~$\partial \V_n[U]$ for the set of vertices of $\G_n$ in the interior of~$U$ and those which share an edge with a vertex in the interior, respectively. 
\begin{theoremA}[Convergence of the Dirichlet problem] \label{theorem:convergence-of-the-dirichlet-problem}
	Let~$U \subset \mathcal{D}$ be a bounded domain and let~$h_C$ be a function which is harmonic on~$U$. Assume that at least one of the following 
	two conditions hold:  
	\begin{enumerate}[label = (\alph*)]
		\item $h_C$ is harmonic in a neighborhood of~$\overline{U}$; or \label{item:cont-neighborhood}
		\item $h_C$ is continuous on~$\overline{U}$ and $U$ is Lipschitz and~$\overline{U} \subset \mathcal{D}$. \label{item:hypb}
	\end{enumerate}
	 If~$\G_n$ satisfies~\eqref{eq:smallness} and one of \ref{item:planarity}, \ref{item:vol}, or \ref{item:diam} in Theorem~\ref{theorem:convergence-of-rw-trace}, 
	then the discrete harmonic extension~$h^{n}_D$ of~$h_C$ to~$\V_n[U]$, defined by, \footnote{In the case~$h_C$ is not defined at a boundary vertex~$v$, pick the nearest point in the cell~$P_v$
		for which it is defined.}
	\[	
	\left\{	
	\begin{aligned}
		&  \aDelta h^{n}_D = 0 & \mbox{in} & \ \V_n[U] \\
		& h^{n}_D = h_C  & \mbox{on} &  \ \partial \V_n[U]    \, ,
	\end{aligned}
	\right.
	\]
	converges uniformly to~$h_C$,

	\begin{equation}
		\label{eq:convergence-of-d-general}
	\lim_{n \to \infty} \sup_{z \in \V_n[U]} |h^{n}_D(z) - h_C(z)| = 0  \, .
	\end{equation}
\end{theoremA}	
\noindent In fact, we obtain a quantitative bound on the rate of convergence under Assumption~\ref{item:cont-neighborhood} in~\eqref{eq:convergence-of-d-general}, see Theorems~\ref{theorem:2d},~\ref{theorem:generaldim}, and~\ref{theorem:generaldim-decay} below.

\subsection{Examples}
\label{subsec:natural-examples}

\subsubsection{Sphere packings}

A \emph{sphere packing} is a collection of~$d$-dimensional spheres in~$\R^d$ with disjoint interiors. The spheres are not required to have the same radii. A \emph{sphere-packed graph}, $\G = (\V,\E)$, is the tangency graph of a sphere packing. Each vertex~$v \in \V$ is associated to a sphere~$S_v$ with~$v$ as its centerpoint, and for each edge~$(v, w) \in \E$ there is a unique 
hyperplane~$R_{v,w}$ which is tangent to both~$S_v$ and~$S_w$ at the unique point of $S_v\cap S_w$. A vertex~$v$ is \emph{covered} if the intersection of the hyperplanes~$\{R_{v,w}\}_{(v,w) \in \E}$ is a bounded $d$-dimensional polytope $P_v$ containing $v$. A sphere-packed graph is~\emph{covering} if every vertex is covered. A sphere packing~\emph{covers} a domain~$U$ 
if the spheres in the packing cover~$U$ and every vertex whose corresponding sphere intersects~$U$ is covered. 
It is immediate from the definitions that a locally finite sphere packed graph which covers a domain is an orthogonal tiling of that domain.

\subsubsection{Voronoi tesselations and Delaunay triangulations}

The~\emph{Voronoi tesselation} of a locally finite set of points~$S \subset \Rd$, is the partition of~$\Rd$
into cells~$\{\mathcal{C}_s\}_{s \in S}$ such that every point in the tile~$\mathcal{C}_s$ is closer
to~$s$ than any other point~$s' \in S$, 
\[
\mathcal{C}_s = \{y \in \Rd: \|s - y\| \leq \|s' - y\| \quad \forall s' \in S  \} \, . 
\]
Equivalently, see, \eg ,\cite[Section 1]{mollerLectures}, the cell~$\mathcal{C}_s$ is the intersection, 
over all~$s'\in S$ with~$s' \neq s$ of the closed halfspace 
containing~$s$ and bounded by the bisecting hyperplane of~$s$ and~$s'$, 
\begin{equation}
	\label{eq:half-space-definition-of-cells}
	\mathcal{C}_s = \bigcap_{s' \in S : s' \neq s} \{ y \in \Rd:  (y - \frac{1}{2}(s + s')) \cdot (s-s') \geq 0 \} \, . 
\end{equation}
Consequently, cells are closed~$d$-dimensional convex sets; and for any two neighbors~$s,s'$, the
vector~$(s-s')$ is orthogonal to the~$(d-1)$-dimensional facet~$\mathcal{C}_s \cap \mathcal{C}_{s'}$.
If the cells are finite we denote the orthogonal tiling induced by the Voronoi tesselation of a point set~$S \subset \Rd$ by~
\begin{equation}
	\label{eq:voronoi-tiling-graph}
	\GVor(S) = (\VVor(S), \EVor(S)) \, .
\end{equation}
This is the~\emph{Delaunay triangulation} of the Voronoi tesselation.

Let $U\subset \Rd$ be a bounded domain. Let~$\mu$ be a Radon measure defined in a neighborhood of $\overline U$
which satisfies, for some exponents~$\beta^{\pm} > 0$, for all sufficiently small~$r \in (0,1)$, the bound 
\begin{equation}
	\label{eq:holder-assumption}
	r^{\beta^{+}} \leq \mu(B_r(z)) \leq r^{\beta^{-}} \,  \ \quad \forall z \in U , 
\end{equation}
	where~$B_r(s)$ is the Euclidean ball of radius~$r$ centered at~$s \in \Rd$. 
The following proposition implies that Hypothesis~\ref{item:vol} is satisfied  by~$\GVor(\Lambda_m)$,
where~$\Lambda_m$ is a Poisson point process of intensity~$m \mu$, for large~$m$ almost surely.
\begin{prop}
	\label{prop:min-max-edge-length}
	Let~$U$ be a bounded domain.
	With probability one, for all sufficiently large~$m$, the cells of the Voronoi tesselation of~$\Lambda_m$
	which intersect~$U$ satisfy, 
	\begin{equation}
		\label{eq:cells-are-contained-in-balls}
	B_{m^{-8/\beta^-}}(s) \subset \mathcal{C}_s \subset B_{m^{-1/(3\beta^+)}}(s) \qquad \forall s \in \Lambda_m  \mbox{ with~$\mathcal{C}_s \cap U \neq \emptyset$} \, .
	\end{equation}
	In particular,the conclusions of Theorem~\ref{theorem:convergence-of-the-dirichlet-problem} and, if~$U$ is Lipschitz, Theorem~\ref{theorem:convergence-of-rw-trace}, hold for the graphs $\GVor(\Lambda_m)$ as $m\to\infty$.
\end{prop}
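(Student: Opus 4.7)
My plan for Proposition~\ref{prop:min-max-edge-length} is to establish the two inclusions in~\eqref{eq:cells-are-contained-in-balls} via separate Poisson concentration estimates and Borel--Cantelli, and then observe that the radii chosen force both the smallness condition~\eqref{eq:smallness} and Hypothesis~\ref{item:vol}, so that Theorems~\ref{theorem:convergence-of-rw-trace} and~\ref{theorem:convergence-of-the-dirichlet-problem} apply directly to $\GVor(\Lambda_m)$ for $m$ large.

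For the outer inclusion $\mathcal{C}_s \subset B_{m^{-1/(3\beta^+)}}(s)$, I would fix a bounded open $V$ with $\overline U \subset V$ on which $\mu$ satisfies~\eqref{eq:holder-assumption}, set $r_m := m^{-1/(3\beta^+)}$, and cover $V$ by $O(r_m^{-d})$ Euclidean balls of radius $r_m/2$. By~\eqref{eq:holder-assumption} each such ball has $\mu$-mass at least $(r_m/2)^{\beta^+}$, so the probability it contains no point of $\Lambda_m$ is at most $\exp(-2^{-\beta^+} m^{2/3})$. The resulting polynomial union bound is summable in $m$, so Borel--Cantelli ensures that almost surely for all sufficiently large $m$, every $z \in V$ lies within $r_m$ of $\Lambda_m$. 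The key identity $\|s - y\| = \mathrm{dist}(y, \Lambda_m)$ for $y \in \mathcal{C}_s$ then gives $\|s - y\| \leq r_m$ for every $y \in \mathcal{C}_s \cap V$. A convexity argument completes the job: if $\mathcal{C}_s \cap U \neq \emptyset$, then $s$ lies within $r_m$ of $U$, and no $y \in \mathcal{C}_s$ can escape $V$, because the segment from $s$ to such a $y$ would meet $\partial V$ at a point where the coverage bound contradicts the fixed gap $\mathrm{dist}(U, \partial V) > 2 r_m$.

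For the inner inclusion $B_{m^{-8/\beta^-}}(s) \subset \mathcal{C}_s$, it suffices to rule out ordered pairs of distinct points $s, s' \in \Lambda_m$ with $s \in V$ and $\|s - s'\| \leq \rho_m := 2 m^{-8/\beta^-}$, since by the half-space description~\eqref{eq:half-space-definition-of-cells} this forces $B_{\rho_m/2}(s) \subset \mathcal{C}_s$. Using the upper bound in~\eqref{eq:holder-assumption} together with the Campbell--Mecke formula, the expected number of such pairs is at most $m^2 \int_V \mu(B_{\rho_m}(x)) \, d\mu(x) \leq m^2 \mu(V) \rho_m^{\beta^-} = O(m^{-6})$, which is summable. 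Markov's inequality and Borel--Cantelli exclude such pairs for all large $m$ almost surely, and the outer bound ensures that every $s$ with $\mathcal{C}_s \cap U \neq \emptyset$ lies in $V$, giving the inner inclusion.

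To deduce the main theorems, I would take the underlying domain $\mathcal{D}$ in their hypotheses to be an open set with $\overline U \subset \mathcal{D}$ and $\overline{\mathcal{D}} \subset V$. The outer bound then gives $\eps_m \leq 2 r_m \to 0$ and the nearest-vertex part of~\eqref{eq:smallness}, while the inner bound forces every relevant tile to contain a Euclidean ball of radius $m^{-8/\beta^-}$, and so have volume bounded below by $c\, m^{-8d/\beta^-}$, which is polynomial in $m$ and hence much larger than $\exp(-o(\eps_m^{-1})) = \exp(-o(m^{1/(3\beta^+)}))$, verifying Hypothesis~\ref{item:vol}. The most delicate step is the localization in the outer bound: the covering estimate only controls $V$, but a priori a cell meeting $U$ could protrude arbitrarily far; the convexity-plus-boundary argument resolves this provided $V$ is chosen with $\mathrm{dist}(U, \partial V)$ a fixed positive constant, so that eventually $r_m$ is smaller than this gap.
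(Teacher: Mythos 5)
Your proposal is correct in substance and reaches the same quantitative conclusion, but it takes a genuinely different route from the paper's. The paper first proves a deterministic geometric lemma (Lemma~\ref{lemma:geometric-constraint}, which in turn rests on the dual-polytope estimate in Lemma~\ref{lemma:dual-polytope-bound}): if every cube of side $2\delta k$ centered at a $k$-grid point near $U$ contains a point of $S$ then cells near $U$ are $O(k)$-small, and if every cube of side $4r$ at an $r$-grid point contains at most one point then cells contain a ball of radius $r$. It then verifies those grid conditions for $\Lambda_m$ with $k=m^{-1/(2\beta^+)}$, $r=m^{-8/\beta^-}$; the upper bound (no two points in a small cube) is handled by partitioning the exponent interval $[\beta^-,\beta^+]$ into levels $R_j$ according to the $\mu$-mass of each box and union-bounding level by level. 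You instead argue directly: for the outer inclusion you cover a fixed neighborhood $V\supset\overline U$ by balls of radius $r_m/2$ and use the identity $\|s-y\|=\mathrm{dist}(y,\Lambda_m)$ for $y\in\mathcal{C}_s$ together with convexity of the cell to keep it inside $V$; for the inner inclusion you compute the expected number of close pairs via the Mecke (second-moment) formula and finish with Markov and Borel--Cantelli. Your Mecke computation replaces the paper's dyadic partition into the sets $R_j$ with a one-line integral bound, and your covering-plus-convexity argument replaces the dual-polytope machinery with the elementary Voronoi identity; both are cleaner, at the cost of invoking the Mecke formula rather than using only Poisson tail bounds. The one place where you should be slightly more careful is purely cosmetic: the Hölder hypothesis~\eqref{eq:holder-assumption} is stated for $z\in U$, so to evaluate $\mu(B_r(z))$ at $z$ slightly outside $U$ (your covering balls and the Mecke integral both need this) you should fix $V$ small enough that the hypothesis extends to $V$; the paper implicitly does the same by working on the shrinking neighborhood $U'=B_{4m^{-1/(4\beta^+)}}(U)$. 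Also close the loop in the convexity step by covering $\overline V$ rather than $V$ so that the contradiction point on $\partial V$ is covered by the estimate. Finally, your verification of Hypothesis~\ref{item:vol} and~\eqref{eq:smallness} matches the paper's: the minimum cell volume $\asymp m^{-8d/\beta^-}$ is polynomial in $\eps_m^{-1}\gtrsim m^{1/(3\beta^+)}$ and so vastly exceeds $\exp(-o(\eps_m^{-1}))$.
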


\begin{remark} \label{remark:gmc}
	Let $\Phi$ be a log-correlated Gaussian field on a bounded domain $U \subset \Rd$ and let $\gamma \in (0,\sqrt{2d})$. Let $\mu = ``e^{\gamma \Phi(x)} \,dx_1\dots dx_d"$ be the associated Gaussian multiplicative chaos (GMC) measure. It follows from standard estimates for GMC that there exists $\beta^\pm = \beta^\pm(d,\gamma)> 0$ such that a.s.~\eqref{eq:holder-assumption} holds for $\mu$ for all sufficiently small $r\in (0,1)$ (how small is random). For example, this can be deduced from~\cite[Theorem 3.26]{berestycki2024gaussian} and a union bound argument. Hence, Proposition~\ref{prop:min-max-edge-length} applies to GMC measures on domains in $\Rd$.
\end{remark}

\subsubsection{Orthodiagonal maps}
\label{subsection:orthodiagonal}
\begin{figure}
\includegraphics[width=0.3\textwidth]{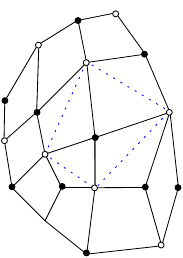}
\caption{An orthodiagonal map. The vertices in~$\V^{\bullet}$ ($\V^{\circ}$) are black (white) and the edges~$\E$ are black lines.
The polytope corresponding to the center vertex is outlined in dotted blue.
 } 
\label{fig:orthogonal-diagonal}
\end{figure}

An \emph{orthodiagonal} map is a finite connected planar graph in which each edge is a straight line segment, each inner face is a quadrilateral with orthogonal diagonals; and the boundary of the outer face is a simple closed curve. An orthodiagonal map 
can be expressed as~$\G = ([\V^{\bullet}, \V^{\circ}], \E)$, where~$\V^{\bullet}, \V^{\circ}$ is a bipartition of the vertices of the graph. 
The vertices~$\V^{\bullet}$ induce a graph~$\G^{\bullet} = (\V^{\bullet}, \E^{\bullet})$ where there is an edge between two vertices~$v_1, v_2 \in \V^{\bullet}$, if they lie on the same inner face of~$\G$. The vertices~$\V^{\circ}$ similarly induce a graph~$\G^{\circ} = (\V^{\circ}, \E^{\circ})$. Isoradial graphs, discussed at the beginning of Section~\ref{subsec:universality} below, can be represented as an orthodiagonal map, see~\cite[Section 2.2]{gurel2020dirichlet}.

The conductance of an edge~$(v_1, v_2)$ corresponding to the inner face of a quadrilateral $(v_1, w_1, v_2, w_2)$ is given by 
\[
\mathbf{a}(v_1, v_2) := \frac{ \|w_1 - w_2\|}{\|v_1 -v_2\|} \, . 
\]
It was shown in~\cite{gurel2020dirichlet} that functions~$f: \V^{\bullet} \to \R$ which are~$\mathbf{a}$-discrete harmonic, that is, which satisfy,
\[
\sum_{(v, w) \in \E^{\bullet}} \mathbf{a}(v, w) (f(v) - f(w)) = 0  \, , 
\]
are close to continuous harmonic functions (in the sense of Theorem~\ref{theorem:convergence-of-the-dirichlet-problem}). 

When every quadrilateral in~$\G$ is convex,~$\G^{\bullet}$ can be represented as an orthogonal tiling. Each vertex~$v \in \V^{\bullet}$
which is not on the outer face of~$\G$ has a polytope~$P_v$ with edges given by the opposite diagonals of the quadrilaterals
containing~$v$ --- that is for each internal face~$(v, w, v', w')$ of~$\G$ with~$v' \in V^{\bullet}$ and~$w,w' \in V^{\circ}$  the edge~$(w, w')$ --- see Figure~\ref{fig:orthogonal-diagonal}. It is straightforward to see for each~$v, v' \in \V^{\bullet}$ that~$\mathbf{a}(v, v')$ coincides
with the conductance defined in~\eqref{eq:geometric-laplacian} and that convexity of each quadrilateral ensures~$v$ is contained in the interior of~$P_v$ and that~$P_v$ is a simple, convex polygon.


We describe how to adapt our proof to the case where non-convex quadrilaterals are allowed in Remark~\ref{remark:orthodiagonalmaps} below.

\subsection{Prior work and proof idea}
\label{subsec:prior}

The equation~\eqref{eq:geometric-laplacian} is a~\emph{finite volume scheme} on~$\G_n$ and it is well known that the convergence in Theorem~\ref{theorem:convergence-of-the-dirichlet-problem} holds (with no additional assumptions on~$\G_n$) in~$L^2$, see, \eg, \cite[Theorem 9.3]{eymard2000finite}. Briefly, a finite volume scheme is a numerical method for solving divergence-form elliptic partial differential equations. The domain is partitioned into ``control volumes'' and integrating the equation by parts
on each volume leads to a finite set of constraints. In the case of the Laplacian on orthogonal tilings this gives the operator~$\aDelta$. 

 Our main contribution can be thought of as an~$L^2$ to $L^{\infty}$ estimate for the operator~$\aDelta$ under very weak regularity assumptions on the graph.  As discussed above, these weak assumptions allow the result to apply to highly inhomogeneous random graphs arising from approximations of random fractal geometries. 

$L^2$ to $L^\infty$ estimates are established in previous work when~$\aDelta$ is \emph{uniformly elliptic}, that is, when~$\cond$ is bounded uniformly from above and below, see for example~\cite[Proposition 5.3]{delmotte} for an interior~$L^2$ to $L^{\infty}$ estimate for discrete harmonic functions. The main difficulty in proving Theorem~\ref{theorem:convergence-of-the-dirichlet-problem} is that, in general, the operator~$\aDelta$ is not uniformly elliptic~\emph{at any scale}. 

We prove~\eqref{eq:convergence-of-d-general} by an iteration of an~$L^2$ bound (Proposition~\ref{prop:energy-bound}) and a Poincar\'e type inequality (Lemma~\ref{lemma:ptype-inequality}). The proof, although distinctly different, is reminiscent of the Campanato-type large-scale regularity iteration originating in~\cite{armstrongsmart} and exposited in~\cite[Chapter 4]{AKMBook}. The idea there, which has become a cornerstone of the theory of elliptic homogenization, is that if the solution 
of a heterogeneous equation can be approximated by a harmonic function across a wide range of scales, it should be as regular as harmonic functions.

Here our ``harmonic approximation'', Proposition~\ref{prop:energy-bound}, has an error 
that depends on the continuum harmonic function all the way up to the boundary. Since our graphs are possibly very irregular, there is no immediate way to ``mollify away'' this error. This prevents us from applying the iteration in~\cite{armstrongsmart}. 

Instead, we fix the continuum harmonic function and iterate the discrete approximation. We first show, by the~$L^2$ estimate and a Poincar\'e-type inequality, that the discrete harmonic function,~$h_D$, is pointwise close to the continuum one,~$h_C$, on a dense set of columns (Proposition~\ref{prop:energy-poincare-bound}). The remaining ``bad'' set where~$|h_D - h_C|$ is large has small measure. By Proposition~\ref{prop:energy-poincare-bound} again, the discrete harmonic extension of~$h_C$ to the bad set is close to~$h_C$ on a set of even larger measure. Repeating this, using one of assumptions \ref{item:vol} or \ref{item:diam} in Theorem~\ref{theorem:convergence-of-rw-trace} to close the loop, yields the pointwise bound. 

When~$d=2$, the case which has received the most attention in previous literature, closeness on a dense set of columns and the maximum principle immediately imply the pointwise bound everywhere, yielding a short proof of Theorem~\ref{theorem:convergence-of-the-dirichlet-problem}. This two dimensional result slightly improves~\cite[Theorem 1.1]{gurel2020dirichlet} in that we do not require the domains to be simply connected.

Earlier work in dimension two, for instance~\cite{MR1692623},~\cite{chelkaksmirnov}, \\ \cite{skopenkov2013boundary}, and~\cite{werness2015discrete}, imposed various assumptions on the graph which ensure that the operator~$\aDelta$ is uniformly elliptic. A similar result for finite volume schemes of a convection diffusion equation in dimensions two and three was also proved (again under assumptions implying uniform ellipticity) in~\cite[Corollary 1]{MR1863279}.

\subsection{Universality and random geometry} 
\label{subsec:universality}
A special case of orthogonal tilings in two dimensions are \emph{isoradial graphs}, graphs which have an embedding into~$\R^2$ so that every face~$F$ of the graph lies on a circle of radius 1 with center in the interior of~$F$.  The term was coined in~\cite{kenyon2002} and a theory of discrete analytic functions on these graphs was developed in~\cite{duffin} and later, independently in~\cite{mercat}. 

Convergence of the Dirichlet problem on isoradial graphs,~\cite{chelkaksmirnov}, was used in a companion paper,~\cite{MR2957303}, to prove
convergence of the critical Ising model, thus establishing~\emph{universality} for the model. 
In an ICM survey~\cite{smirnovicm} asked if these results generalize to other two-dimensional graphs and higher dimensions; in particular, our work addresses~\cite[Questions 1 and 12]{smirnovicm}. This question was also highlighted again in~\cite[Problem 5.9]{skopenkov2013boundary}. 
Several other models have been studied on isoradial graphs including critical dimers, loop-erased random walk, spanning trees, the random-cluster model, and bond percolation; see the survey~\cite{MR3372861} and the papers~\cite{MR3252428, MR3201923,MR3621833, MR3858924}. An interesting next step would be to consider these models for~$d \geq 3$
on orthogonal tilings, see Problem~\ref{problem:lerw} below.

This paper can also be seen as a contribution towards understanding higher-dimensional analogues of discrete conformal geometry and~\emph{Liouville quantum gravity} (LQG). Roughly speaking, LQG is a theory of random geometry in two dimensions which describes the scaling limit of discrete random surfaces, such as uniform triangulations of the sphere, see the surveys~\cite{GwynneSurvey, MR4680280} and the book~\cite{berestycki2024gaussian}. 

There has been recent interest in establishing a corresponding theory in~$d \geq 3$, see, for example,~\cite{schiavo2021conformally, cercle2022, ding2023tightness}. These works study a notion of continuum random geometry in higher dimensions, which is described by a random Riemannian metric tensor of the form $g = e^{\gamma \Phi} g_0$, where $\gamma \in (0,\sqrt{2d})$ is a parameter, $\Phi$ is a log-correlated Gaussian field on a $d$-dimensional manifold (or a minor variant thereof), and $g_0$ is a fixed smooth background metric. If we take $g_0$ to be the flat metric, then (at least at a heuristic level) the random metric tensor $g$ is \emph{conformally flat}, meaning that it is a scalar function times the flat metric. It remains open to determine the correct discrete analog of this continuum theory (analogous to random planar maps in~$d=2$). See, \eg,~\cite[Section 1.2]{ding2023tightness}. The results of this paper contribute to this problem in two respects.

A natural approach to finding a discrete analog of LQG in $d$ dimensions is to search for a class of graphs embedded in $\mathbb R^d$ which satisfy a discrete analog of ``conformal flatness". In~\cite{curienbenjamini} sphere packings were studied partly to understand higher dimensional quantum gravity and it was asked,~\cite[Question 2]{curienbenjamini}, if being representable as the tangency graph of a sphere packing is a discrete analog of being ``conformally flat''. Theorem~\ref{theorem:convergence-of-rw-trace} may be viewed as positive evidence for this, since it implies that the scaling limit of random walk on sphere packings is Brownian motion modulo time change, \ie, in the scaling limit there is no directional bias depending on where the walk is in space.

 Gaussian multiplicative chaos (GMC),~\cite{kahane1985chaos,rhodes2014gaussian,berestycki2024gaussian}, provides the analogue of the LQG area measure in~$d \geq 3$. Voronoi tessellations with centers sampled according to a Poisson point process with respect to GMC (trivially) converge to GMC in the large sample size limit. Thus, Proposition~\ref{prop:min-max-edge-length} and Remark~\ref{remark:gmc} give the convergence to Brownian motion (modulo time change) for random walk on certain embedded random graphs which approximate higher-dimensional analogs of LQG. The first papers to establish this sort of result in two dimensions were~\cite{ GMS-Tutte, GMSInvariance} which prove the convergence of random walk on \emph{mated-CRT maps}, a family of random planar maps known to converge to Liouville quantum gravity. The setting and proof of this result, however, are quite different from the present paper. The proof uses both planarity and the randomness of the environment (whereas here we have a fixed, deterministic environment in arbitrary dimension). Moreover, there are no conductances on the edges of the mated-CRT map, so the random walk is not a martingale (although the proofs show that it is approximately a martingale at large scales). See also Problem~\ref{problem:higherdim-invariance}.

\subsection{Open questions}
We mention some possible directions for future research motivated by our work. 
\begin{problem} \label{problem:lerw}
	What can be said about other statistical mechanics models, \eg, loop-erased random walk, uniform spanning tree, percolation, Ising model, dimers, discrete Gaussian free field, etc., on orthogonal tilings for $d\geq 3$?
\end{problem}

The paper~\cite{GMSInvariance} proves that random walk converges to Brownian motion modulo time change in certain random planar environments which are not stationary with respect to translations, but are instead only ``translation invariant modulo scaling". It is plausible that this result could be extended to dimension $d\geq 3$ if one assumes that the edge conductances scale by $\Lambda^{d-2}$ when one scales space by $\Lambda$, see~\cite[Section 1.5]{GMSInvariance}. It appears that the main obstacle to proving such an extension is transferring from an $L^2$ bound for discrete harmonic functions to an $L^\infty$ bound (this corresponds to Lemma 2.19 in \cite{GMSInvariance}, the proof of which strongly uses planarity).

\begin{problem} \label{problem:higherdim-invariance}
Can the techniques of this paper be used to prove a higher dimensional analogue of \cite{GMSInvariance}?
\end{problem}

A consequence of Theorem~\ref{theorem:convergence-of-rw-trace} is that random walk on Voronoi tessellations with centers sampled from a $d$-dimensional Gaussian multiplicative chaos measure converges to Brownian motion modulo time change. 
The paper~\cite{GB-LBM} proves that random walk on mated-CRT maps converges in law with respect to the uniform topology to \emph{Liouville Brownian motion}, the natural LQG time change of Brownian motion. It would be interesting to establish an analog of this result for the Voronoi tessellations considered in this paper.  

\begin{problem} \label{problem:convergence-to-lqg}
Determine the scaling limit of random walk on Voronoi tessellations with centers sampled from $d$-dimensional GMC viewed as a parametrized path which spends one unit of time in each cell.
\end{problem}

Instead of giving the walk its natural parametrization, we could alternatively seek a time parametrization which makes it converge to standard Brownian motion. To be consistent with Brownian scaling, we need to parametrize so that if we scale space by $\Lambda$, the amount of time spent in each polytope scales like $\Lambda^{2 }$. 

\begin{problem}
	Consider random walk on an orthogonal tiling graph parameterized so that the amount of time
	it spends at each polytope is proportional to the volume of the polytope to the power $2/d$.
	Does this random walk converge to standard Brownian motion? 
\end{problem}

\begin{problem} \label{problem:hypotheses}
For $d\geq 3$, either show that the conclusion Theorem~\ref{theorem:convergence-of-rw-trace} is true without assumption \ref{item:vol} or \ref{item:diam}; or give an explicit counterexample to show that at least one of these assumptions is necessary. 
\end{problem}

\subsection{Paper outline} 
We start in Section~\ref{sec:prelims} with discrete PDE estimates, geometric bounds for orthogonal tilings, and the aforementioned~$L^2$ estimate for~$\aDelta$-harmonic functions. All of the results contained in the section are standard but included for the reader's convenience. In Section~\ref{sec:iteration} we prove Theorem~\ref{theorem:convergence-of-the-dirichlet-problem} under Hypothesis~\ref{item:cont-neighborhood} and at least one of Assumptions~\ref{item:planarity},~\ref{item:vol}, or~\ref{item:diam}. 
Then, in Section~\ref{sec:rw-convergence} we use this to prove Theorem~\ref{theorem:convergence-of-rw-trace} and subsequently deduce Theorem~\ref{theorem:convergence-of-the-dirichlet-problem} under Hypothesis~\ref{item:hypb}. In fact, there we prove a ``black box'' statement asserting, roughly, that whenever one has convergence of the Dirichlet problem, one has convergence of the associated random walk. In Section~\ref{sec:verifying-assumptions} we prove Proposition~\ref{prop:min-max-edge-length}, which verifies that our results apply to graphs derived from Gaussian multiplicative chaos measures. 
Finally, in Section~\ref{sec:counter-example} we give an example of a sphere packing in~$d \geq 3$ for which simple random walk (with unit conductance)
does not converge to Brownian motion modulo time parameterization.

\subsection*{Conventions and notation}
\begin{itemize}
	\item For a set~$U \subset \Rd$, we denote the~$d$-dimensional Lebesgue measure by $\vol_{d}(U)$
	and we denote its Euclidean diameter by~$\diam(U)$.
	\item For a point~$v \in \Rd$,~$\|v\|$ is the Euclidean norm.
	\item For a point~$v \in \Rd$ and~$r > 0$, we write~$B_r(v)$ for the Euclidean ball of radius~$r$ centered at~$v$. When~$v=0$, we write~$B_r$. 
	\item Unless explicitly mentioned, we henceforth fix a set~$\mathcal{D}$ and an orthogonal tiling~$\G$ of~$\mathcal{D}$ and denote the set of edges and vertices by~$\E$ and~$\V$ respectively. 
	We also denote, for~$e = (w,v) \in \E$, 
	\[
	H_e := P_w \cap P_v \, . 
	\]
	\item In an abuse of notation, we identify vertices~$v \in \V$
	with their location in~$\Rd$ and the edge~$(w,v) \in \E$ with 
	the straight line between~$w$ and~$v$. We also denote the length of the edge~$e = (w,v) \in \E$, by~$\|e\| := \|w - v\|$.

	\item 
	The induced subgraph of a domain~$U \subset \Rd$ is~$\G[U]$. It is defined as the set of vertices contained in the interior of~$U$ 
	together with the edges connecting pairs of vertices in that subset --- see Figure~\ref{fig:induced-subgraph}. The edges and vertices of an induced subgraph are~$\E[U]$ and~$\V[U]$ respectively. 
	
	\item The set of edges connecting pairs of vertices in a set~$A$ is denoted by~$\E[A]$. 
	The set of edges in~$\E$ with at least one vertex in~$A$ is~$\E[\overline{A}]$ and~$\partial \E[A] = \E[\overline{A}] \setminus \E[A]$.
	In an abuse of notation, we consider~$\E$ to be a set of both directed and undirected edges.

	\item 
	The sets~$\partial U$,~$U^{\circ}$, and~$\overline U$ denote the boundary, interior, and closure of~$U$, a subset of~$\Rd$. 
	\item For a collection of vertices~$A \subset \Rd$, we write~$\partial A$ for the set of vertices in~$\V \setminus A$ which share an edge with a vertex in~$A$
	and~$\overline A = A \cup \partial A$.

	\item  
	For a domain~$U \subset \Rd$ we define the set
	\begin{equation}
		\label{eq:graph-closure}
		\GCU :=  \bigcup_{v \in \overline{\V[U]}} P_v  \, , 
	\end{equation}
	as the set of polytopes with vertices in or adjacent to~$U$.

	\item A \emph{vector field}~$\theta$ is a function~$\theta: \E[A] \to \R$ which is antisymmetric:
	~$\theta(w,v) = - \theta(v,w)$ for every~$(w,v) \in \E[A]$.

\end{itemize}

\begin{figure}
	\includegraphics[width=0.5\textwidth]{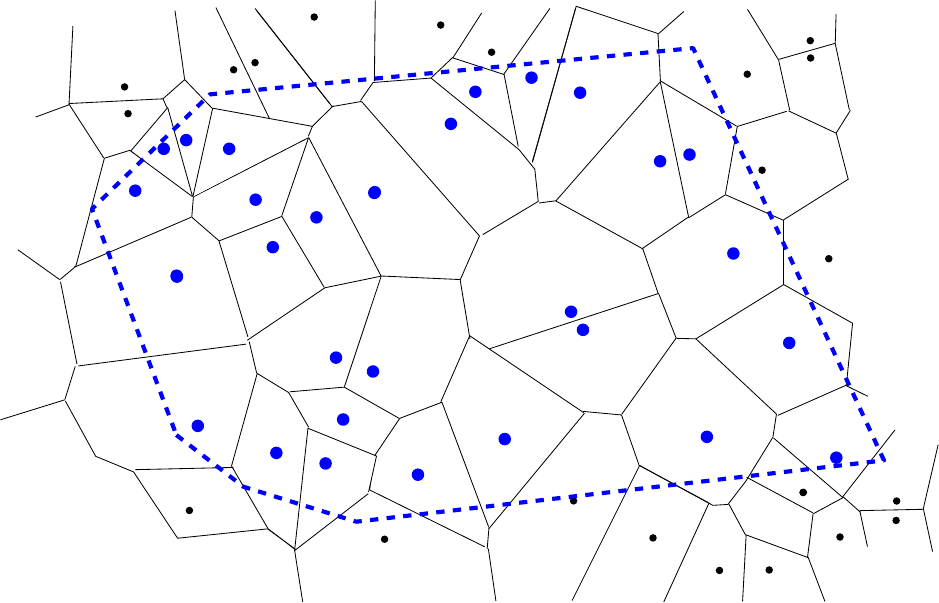}
	\caption{The domain~$U$ is outlined in dashed blue and the vertices of the induced subgraph~$\G[U]$ are blue dots.}
	\label{fig:induced-subgraph}
\end{figure}

\subsection*{Acknowledgments}
Thank you to Gioacchino Antonelli, Scott Armstrong, Bill Cooperman, and Marianna Russkikh for useful discussions and suggestions.
Thank you to Leonardo Bonanno, Jeanne Boursier, Martina Neuman, and the anonymous referee for helpful comments on an earlier
version of this article. 
A.B. was partially supported by NSF grant DMS-2202940. E.G. was partially supported by NSF grant DMS-2245832.

\section{Preliminaries}
\label{sec:prelims}
Here we collect, for completeness, some standard preliminary results that are essentially contained in, \eg, the textbooks~\cite{eymard2000finite, Telcs2006, droniou2018gradient}.

\subsection{Discrete PDE}
We write~$\CLap$ and~$\Cnabla$ for the continuum Laplacian and gradient on~$\Rd$. For a function~$f : \V \to \R$ the vector fields
~$\nabla^{\mathcal{G}} f$ and~$\anabla f$ are defined by,  for a directed edge~$e = (w,v)$, 
\begin{equation}
	\nabla^{\mathcal{G}} f(e) = f(w) - f(v)	
\end{equation}
and
\begin{equation}
	\label{eq:anabla}
\anabla f(e) = \cond(e)(f(w) - f(v)) \, . 
\end{equation}
 The~\emph{divergence} of a vector field~$\theta:\E \to \R$ is the scalar function 
\begin{equation}
	\label{eq:discrete-divergence}
(\div \theta)(v) := \sum_{ (w,v) \in \E } \theta(e) \, , 
\end{equation}
where the sum is over all the directed edges whose endpoint is~$v$. Observe that we can write 
\[
\aDelta h(v) = (\div \anabla h)(v) \, .  
\]

We first recall a version of the discrete divergence theorem. This holds for general graphs
and is well known, but we include a (short) proof. 
\begin{lemma} \label{lemma:discrete-divergence-theorem}
		Let~$A \subset \V$ be a finite set of vertices, and let~$f: \overline{A} \to \R$ be a function which is
	zero on~$\partial A$. Then, for any vector field~$\theta: \E[\overline{A}] \to \R$ we have that 
	\begin{equation}
		\label{eq:discrete-divergence-theorem}
	\sum_{v \in A} (\div \theta)(v) f(v) = - \frac{1}{2} \sum_{e \in \E[\overline{A}]} \theta(e) \nabla^{\mathcal{G}} f(e) \, . 
\end{equation}
\end{lemma}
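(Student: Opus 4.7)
The plan is to use summation by parts, the standard discrete analog of integration by parts or Green's first identity. The key idea is that the left-hand side naturally unfolds into a sum over directed edges $(w,v)$ with $v \in A$, and this double sum can be reorganized by grouping contributions edge-by-edge rather than vertex-by-vertex. The antisymmetry of $\theta$ and the vanishing of $f$ on $\partial A$ together produce the $\nabla^{\mathcal{G}} f$ factor on the right.

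Concretely, I would first expand using the definition of $\div$ and swap the order of summation:
$$\sum_{v \in A} (\div \theta)(v) f(v) = \sum_{v \in A} \sum_{w : (w,v) \in \E} \theta(w,v) f(v),$$
noting that every directed edge $(w,v)$ appearing in this sum lies in $\E[\overline{A}]$ because $v \in A$. Next, I would group the contributions by the underlying undirected edge $\{w,v\} \in \E[\overline{A}]$. When both endpoints lie in $A$, both orientations $(w,v)$ and $(v,w)$ appear, and by antisymmetry of $\theta$ their combined contribution is $\theta(w,v)f(v) + \theta(v,w)f(w) = \theta(w,v)(f(v) - f(w)) = -\theta(w,v)\nabla^{\mathcal{G}} f(w,v)$. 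When exactly one endpoint, say $w$, lies in $\partial A$, only the orientation with head $v \in A$ appears, but the boundary condition $f(w) = 0$ supplies the missing term for free and yields the same expression.

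Summing these contributions over the underlying undirected edges of $\E[\overline{A}]$ gives the identity without the factor of $\tfrac{1}{2}$. The factor $\tfrac{1}{2}$ in the statement reflects the convention of viewing $\E[\overline{A}]$ as a set of \emph{directed} edges: since both $\theta$ and $\nabla^{\mathcal{G}} f$ are antisymmetric, the product $\theta(e)\nabla^{\mathcal{G}} f(e)$ is invariant under reversing the orientation of $e$, so each undirected edge is counted twice when one sums over orientations.

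There is no real obstacle here; the argument is pure bookkeeping enabled by the finiteness of $A$. The one subtlety worth flagging is the sign convention in $(\div \theta)(v) = \sum_{(w,v) \in \E} \theta(w,v)$: because the divergence evaluates $\theta$ on edges whose \emph{endpoint} is $v$, the paired contributions produce $f(v) - f(w)$, which equals $-\nabla^{\mathcal{G}} f(w,v)$ and accounts for the overall minus sign on the right-hand side.
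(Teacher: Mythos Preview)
Your proposal is correct and is essentially the same summation-by-parts argument as the paper's proof: both use antisymmetry of $\theta$ to pair the two orientations of each edge and the boundary condition $f\equiv 0$ on $\partial A$ to handle edges with one endpoint outside $A$. The only cosmetic difference is that the paper starts from the right-hand side and collapses it to $-2\sum_{v\in A} f(v)(\div\theta)(v)$, whereas you start from the left and expand.
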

\begin{proof}
	Each neighboring pair of vertices~$\{w,v\}$ appears twice in the sum on the right in~\eqref{eq:discrete-divergence-theorem}, once as~$\theta(w,v) (f(w) - f(v))$
	and again as~$\theta(v,w) (f(v)-f(w)) = \theta(w,v) (f(w)-f(v))$, by antisymmetry of~$\theta$. Consequently, since~$f \equiv 0$ on~$\partial A$, 
	\[
		\sum_{e \in \E[\overline{A}]} \theta(e) \nabla^{\mathcal{G}} f(e) =  \sum_{v \in A} -2 f(v) \sum_{(w,v) \in \E} \theta(w,v)  
		= \sum_{v \in A} -2 f(v) (\div \theta)(v) \, , 
	\]
	which completes the proof.
\end{proof}

The next lemma states that linear functions lie in the kernel of~$\aDelta$. In particular, like the usual Laplacian, the operator~$\aDelta$ has no drift.
This is not used anywhere in the paper, but is included to give some justification for the choice of conductances in~$\aDelta$.
A similar statement in the special case of sphere packings appears as~\cite[Proposition 6.4]{MR4425348}.
\begin{lemma} \label{lemma:linear-lie-in-kernel}
	For every~$p \in \Rd$ and ~$c \in \R$, denoting~$\ell_p(x) = p \cdot x + c$, we have that $\aDelta \ell_p = 0$. 
\end{lemma}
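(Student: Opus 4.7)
My plan is a one-line divergence theorem argument. Fix a vertex $v\in\V$ and use the linearity of $\ell_p$ to write
\begin{equation*}
\aDelta \ell_p(v) \;=\; \sum_{(w,v)\in\E} \frac{\vol_{d-1}(P_w\cap P_v)}{\|w-v\|}\, p\cdot(w-v)
\;=\; \sum_{(w,v)\in\E} \vol_{d-1}(P_w\cap P_v)\, p\cdot n_{v,w},
\end{equation*}
where $n_{v,w}:=(w-v)/\|w-v\|$. The constant $c$ drops out because it cancels in every difference $\ell_p(w)-\ell_p(v)$.

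The next step is to identify $n_{v,w}$ with the outward unit normal to $P_v$ along the facet $P_w\cap P_v$. By the definition of an orthogonal tiling, the vector $w-v$ is orthogonal to the hyperplane containing that facet; since $v$ lies in the interior of $P_v$ and $w$ lies on the opposite side of that hyperplane (in $P_w$), $n_{v,w}$ must point outward from $P_v$. Hence $\vol_{d-1}(P_w\cap P_v)\, p\cdot n_{v,w}$ equals the flux of the constant vector field $x\mapsto p$ through the facet $P_w\cap P_v$.

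I would then apply the classical divergence theorem to $P_v$ with this constant vector field, using that every facet of $P_v$ is of the form $P_w\cap P_v$ for some neighbor $w$ (this is where we use that $v$ is an interior vertex of the tiling, so that $P_v$ has no ``free'' boundary):
\begin{equation*}
\aDelta \ell_p(v) \;=\; \int_{\partial P_v} p\cdot n\, dS \;=\; \int_{P_v} \div(p)\, dx \;=\; 0.
\end{equation*}

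The only subtlety I foresee is the bookkeeping in the last display: I want to be sure the sum over edges $(w,v)\in\E$ exhausts all facets of $P_v$ with no double counting. This is immediate from the definition of $\E$ together with the fact that the polytopes $\{P_w\}$ cover $\mathcal{D}$ with disjoint interiors, so that each $(d-1)$-dimensional facet of $P_v$ in $\mathcal{D}^\circ$ is shared with exactly one other polytope. Away from this, everything else is a formal manipulation.
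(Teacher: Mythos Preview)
Your proof is correct and is essentially the same as the paper's: both compute $\aDelta\ell_p(v)$ as $\sum_{(w,v)}\vol_{d-1}(P_w\cap P_v)\,p\cdot n_{v,w}$, identify $n_{v,w}$ with the outward unit normal via the orthogonality condition, and then invoke the divergence theorem on $P_v$ for the constant vector field $p$ (the paper phrases this as $\int_{P_v}\CLap\ell_p=0$, which is the same thing). Your additional remarks on the outward orientation of $n_{v,w}$ and on the facets of $P_v$ being exhausted by the neighbors are points the paper leaves implicit.
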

\begin{proof}
	Let~$v \in \V$ and observe that by definition, 
	\begin{align*}
		\aDelta \ell_p(v) = \sum_{(w,v) \in \E} \frac{\vol_{d-1}(P_w \cap P_v)}{\|w - v\|} (\ell_p(w) - \ell_p(v))
		= 
				\sum_{(w,v) \in \E} \vol_{d-1}(P_w \cap P_v) \left( p \cdot \frac{(w-v)}{\|w - v\|} \right)\, . 
	\end{align*}
	By the (continuum) divergence theorem we have
	\[
	0 = \int_{P_v} \CLap \ell_p  = \int_{\partial P_v} \Cnabla \ell_p \cdot \nu
	= \int_{\partial P_v} p \cdot \nu 
	   \, , 
	\]
	where~$\nu$ denotes the outward pointing normal vector field to~$P_v$. By the definition of an orthogonal tiling, whenever $(w,v) \in \E$ we have $\nu = \frac{w-v}{\|w-v\|}$ on $P_v \cap P_w$. Therefore,
	\[
	\int_{\partial P_v} p \cdot \nu  = 	\sum_{(w,v) \in \E} \vol_{d-1}(P_w \cap P_v) \left( p \cdot \frac{(w-v)}{\|w - v\|} \right)\,.  
	\]
	Combining the previous three displays completes the proof. 
\end{proof}

We next recall the dual variational principle for the discrete Dirichlet problem. A continuum version of this appears in, for example, \cite[Section 8.7, Exercise 13]{evans2022partial}. Also see~\cite[Proposition 4.9]{gurel2020dirichlet}. This is only used in the proof of Proposition~\ref{prop:energy-bound} below.
 
\begin{lemma} \label{lemma:dual-variational-principle}
	Let~$A \subset \V$ be a finite set of vertices, and let~$f: \overline{A} \to \R$ be a function which is
	zero on~$\partial A$. Then, for any vector field~$\theta: \E[\overline{A}] \to \R$ with~$\div \theta = \div (\anabla f)$ on~$A$ we have
	\[
			\sum_{e \in \E[\overline{A}]}  \cond(e)^{-1} |\anabla f(e)|^2 \leq \sum_{e \in \E[\overline{A}]} \cond(e)^{-1}  \theta(e)^2       \, . 
	\]
\end{lemma}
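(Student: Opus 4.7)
The strategy is the classical Hilbert-space orthogonality argument behind energy minimization: rewrite $\theta$ as $\anabla f$ plus a divergence-free correction, and show the cross term vanishes via the discrete divergence theorem from Lemma~\ref{lemma:discrete-divergence-theorem}.

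Concretely, I would introduce $\eta := \theta - \anabla f$, which is a vector field on $\E[\overline{A}]$ satisfying
\[
(\div \eta)(v) = (\div \theta)(v) - (\div \anabla f)(v) = 0 \quad \text{for every } v \in A,
\]
by the hypothesis on $\theta$. Think of $\sum_e c(e)^{-1}(\cdot)(\cdot)$ as an inner product on antisymmetric functions on $\E[\overline{A}]$; then the goal is to show that $\anabla f$ and $\eta$ are orthogonal with respect to this inner product, since in that case
\[
\sum_{e} c(e)^{-1} \theta(e)^2 = \sum_{e} c(e)^{-1} \anabla f(e)^2 + 2\sum_{e} c(e)^{-1} \anabla f(e)\, \eta(e) + \sum_{e} c(e)^{-1} \eta(e)^2,
\]
and dropping the nonnegative $\eta$-norm term yields exactly the claimed inequality.

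The orthogonality is where the divergence theorem enters. The definition~\eqref{eq:anabla} of $\anabla$ gives $c(e)^{-1} \anabla f(e) = \nabla^{\mathcal{G}} f(e)$, so the cross term simplifies to $\sum_{e \in \E[\overline{A}]} \nabla^{\mathcal{G}} f(e)\, \eta(e)$. Since $f \equiv 0$ on $\partial A$, Lemma~\ref{lemma:discrete-divergence-theorem} applied to $\eta$ then gives
\[
\sum_{e \in \E[\overline{A}]} \nabla^{\mathcal{G}} f(e)\, \eta(e) = -2 \sum_{v \in A} (\div \eta)(v) f(v) = 0,
\]
using that $\div \eta \equiv 0$ on $A$. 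This establishes the orthogonality and hence the lemma.

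There is no real obstacle here; the only thing to double-check is bookkeeping between directed and undirected edges, which is handled by the conventions already fixed in Lemma~\ref{lemma:discrete-divergence-theorem} (each undirected edge corresponds to two antisymmetric values, and the factor of $\tfrac{1}{2}$ in~\eqref{eq:discrete-divergence-theorem} accounts for this). The argument uses nothing specific to orthogonal tilings: it is simply the assertion that the $c^{-1}$-weighted orthogonal complement of the space of divergence-free vector fields on $A$ consists of gradients, with the Dirichlet boundary condition $f|_{\partial A} = 0$ supplying the vanishing boundary terms.
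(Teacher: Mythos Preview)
Your proof is correct and shares the same core with the paper's argument: both hinge on showing that the cross term $\sum_e (\theta - \anabla f)(e)\,\nabla^{\mathcal{G}} f(e)$ vanishes via Lemma~\ref{lemma:discrete-divergence-theorem} and the hypothesis $\div(\theta - \anabla f) = 0$ on $A$. The only cosmetic difference is that the paper packages this as showing $\phi(t) := \sum_e c(e)^{-1}[\anabla f(e) + t(\theta(e) - \anabla f(e))]^2$ has positive derivative on $(0,1)$, whereas your direct expansion of the square is slightly cleaner.
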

\begin{proof}
	Define~$\theta_* := \anabla f$ and let~$\theta: \E[\overline{A}] \to \R$ be a vector field with~$\div(\theta) = \div(\theta_*)$ on~$A$
	and~$\theta \neq \theta_*$. 
	For~$t \in [0,1]$ define
	\[
	\phi(t) := \sum_{e \in \E[\overline{A}]} \cond(e)^{-1} [ \theta_*(e) + t(\theta(e) - \theta_*(e))]^2 \, ,
	\]
	and observe that it suffices to show that~$\phi$ is strictly increasing on the interval~$[0,1]$. We compute, for~$t \in (0,1)$, 
	\begin{align*}
		\phi'(t) &= 2 \sum_{e \in \E[\overline{A}]} \cond(e)^{-1}  (\theta(e) - \theta_*(e)) [ \theta_*(e) + t(\theta(e) - \theta_*(e))]  \\
		&= 2 \sum_{e \in \E[\overline{A}]}  (\theta(e) - \theta_*(e)) \nabla^{\mathcal{G}} f(e) + 2 t  \sum_{e \in \E[\overline{A}]} \cond(e)^{-1} (\theta(e) - \theta_*(e))^2  \\
		&> 2 \sum_{e \in \E[\overline{A}]}  (\theta(e) - \theta_*(e)) \nabla^{\mathcal{G}} f(e) \qquad \mbox{(since ~$\theta_* \neq \theta$)} \\
		&=  -4\sum_{v \in A}  (\div (\theta - \theta_*))(v) f(v) \qquad \mbox{(Lemma~\ref{lemma:discrete-divergence-theorem})} \\
		&= 0  \, , 
	\end{align*}
	where in the last step we used the assumption~$\div(\theta) = \div(\theta_*)$ on~$A$. 
\end{proof}

\subsection{Geometry}
\begin{figure}
	\includegraphics[width=0.25\textwidth]{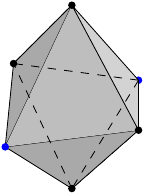}
	\caption{A dual polytope~$\Qe$ in three dimensions; the two vertices~$v$ and~$w$ are in blue and the extremal points of~$H_e$ are in black.}
	\label{fig:threed-qe}
\end{figure}

\begin{figure}
	\includegraphics[width=0.25\textwidth]{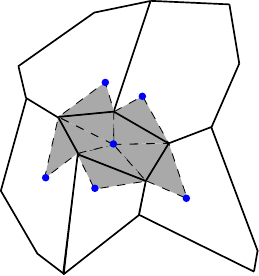}
	\caption{A polytope and its neighbors are outlined in thick black and the vertices are blue dots. For each neighbor, the corresponding~$\Qe$ sets are outlined in dashed lines and filled in dark gray.}
	\label{fig:twod-qe}
\end{figure}

We consider the \emph{dual polytope} associated with the edge~$e = (w,v) \in \E$, 
\begin{equation}
	\label{eq:qeset}
\Qe := \conv( w, P_w \cap P_v) \cup \conv(v, P_w \cap P_v)   \, . 
\end{equation}
See Figures~\ref{fig:threed-qe} and~\ref{fig:twod-qe}.
This set appears in our arguments below and we will need to compute its volume. 
The following elementary identity may be deduced from~\cite[Lemma B.2]{droniou2018gradient} but we include a proof for convenience. 

	\begin{figure}
	\includegraphics[width=0.25\textwidth]{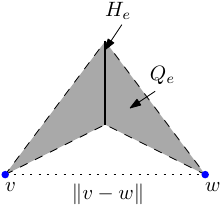}
	\caption{Visual aid to computing the volume of~$\Qe$ in the proof of Lemma~\ref{lemma:geometry-bound}. The points~$v,w$ are in blue, the set~$\Qe$ is in light gray outlined by dashed lines, the facet~$H_e$ is a thick black line, and the line between~$v$ and~$w$ is dotted black.}
	\label{fig:non-gabriel-integrate}
\end{figure}
\begin{lemma} \label{lemma:geometry-bound}
	For each edge~$e = (w,v) \in \E$ we have that 
	\[
	 \|w - v\| \vol_{d-1}(H_e)  = d  \vol_d(\Qe) \, . 
	\]
\end{lemma}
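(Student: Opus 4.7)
The plan is to recognize $\Qe$ as the union of two $d$-dimensional pyramids (cones over $H_e$) with apexes $w$ and $v$ respectively, apply the elementary formula for the volume of a pyramid, and then exploit the orthogonality hypothesis to identify $\|w-v\|$ with the sum of the two heights.

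First I would set $H$ to be the affine hyperplane containing the facet $H_e = P_w \cap P_v$, with unit normal $n$, and let $h_w := \mathrm{dist}(w, H)$, $h_v := \mathrm{dist}(v, H)$. Since $H_e$ is a $(d-1)$-dimensional convex set lying in $H$, the standard formula for the volume of a cone in $\R^d$ gives
\[
\vol_d(\conv(w, H_e)) = \tfrac{1}{d}\, h_w \, \vol_{d-1}(H_e), \qquad \vol_d(\conv(v, H_e)) = \tfrac{1}{d}\, h_v \, \vol_{d-1}(H_e).
\]
Because $w$ and $v$ lie in the interiors of $P_w$ and $P_v$ respectively, and these polytopes have disjoint interiors meeting in the facet $H_e \subset H$, the points $w$ and $v$ are on opposite sides of the hyperplane $H$. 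Consequently the two cones $\conv(w, H_e)$ and $\conv(v, H_e)$ have disjoint interiors, and therefore
\[
\vol_d(\Qe) = \tfrac{1}{d}\bigl(h_w + h_v\bigr)\vol_{d-1}(H_e).
\]

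The key step is to show $h_w + h_v = \|w - v\|$, which is precisely where the orthogonality hypothesis enters. By definition of an orthogonal tiling, $w - v$ is orthogonal to $H$, i.e., $w - v$ is a scalar multiple of $n$, so $\|w - v\| = |n \cdot (w - v)| = |n\cdot w - n \cdot v|$. Writing $H = \{x : n\cdot x = c\}$, we have $h_w = |n\cdot w - c|$ and $h_v = |n\cdot v - c|$; since $w$ and $v$ lie on opposite sides of $H$, the quantities $n\cdot w - c$ and $n\cdot v - c$ have opposite signs, so $|n\cdot w - n\cdot v| = h_w + h_v$. Substituting into the previous display and multiplying through by $d$ yields the claimed identity $\|w-v\|\, \vol_{d-1}(H_e) = d\,\vol_d(\Qe)$.

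I do not anticipate any real obstacle: the only subtlety is verifying that $w$ and $v$ sit strictly on opposite sides of $H$ (which follows since $P_w$ and $P_v$ have disjoint interiors and meet in the $(d-1)$-dimensional facet $H_e$), and that the two cones can be glued without overlap (which follows from the same fact). Everything else is the elementary cone-volume formula applied to the affine hyperplane supporting $H_e$.
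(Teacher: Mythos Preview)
Your proof is correct and takes essentially the same approach as the paper: both decompose $\Qe$ into the two cones $\conv(w,H_e)$ and $\conv(v,H_e)$ and use the pyramid volume formula $\tfrac{1}{d}(\text{base})(\text{height})$. The only cosmetic difference is that the paper first applies an orthogonal transformation to put $H_e$ in the hyperplane $\{x_1=0\}$ with $w=(w_1,0,\dots,0)$, $v=(-v_1,0,\dots,0)$ and then rederives the cone volume via an explicit Jacobian computation, whereas you invoke the cone formula directly and verify $h_w+h_v=\|w-v\|$ from the orthogonality hypothesis; the content is the same.
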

\begin{proof}
	By an orthogonal transformation, we may assume that the facet~$H_e$ lies on the hyperplane~$x_1 = 0$
	and that~$w = (w_1, 0, \ldots, 0)$ and~$v = (-v_1, 0, \ldots, 0)$ for~$v_1, w_1 > 0$. In particular, we have
	\[
	 \conv(w, H_e) = \{ \theta x + (1-\theta) w \quad  \mbox{such that~$x \in H_e$ and~$\theta \in [0,1]$}\} \, . 
	\]
	See Figure~\ref{fig:non-gabriel-integrate}.

	Consequently, the volume may be computed via a change of variables~$\phi: \Rd \to \Rd$ where~$\phi(\theta, x) = (1-\theta) x + \theta w_1$ which has Jacobian determinant~$\theta^{d-1} w_1$ as follows: 
	\[
	\int_{ \conv(w, H_e)} \dvol_{d} = \int_{ \phi([0,1] \times H_e)}  \dvol_{d} =  \int_{0}^{1} \int_{H_e} \theta^{d-1} w_1 \dvol_{d-1} d \theta
	= \frac{1}{d} w_1 \vol_{d-1}(H_e) \, . 
	\]
	Similarly, we have that 
	\[
	\vol_{d}(\conv(v, H_e)) = \frac{1}{d} v_1 \vol_{d-1}(H_e) \, . 
	\]
	The previous two displays yield the claim. 
\end{proof}

We will also need to compare the~$d$-dimensional volumes of sets of polytopes which intersect~$U$.
\begin{lemma} \label{lemma:easy-global-qe-bounds}
	 Recall the definition of $\GCU$ for $U\subset \mathcal{D}$ from~\eqref{eq:graph-closure}. For every bounded set~$U \subset \mathcal{D}$
	\begin{equation}
		\label{eq:easy-bounds-geometric-sets}
	   U \subset \bigcup_{e \in \E[\overline{U}]} \Qe \subset \GCU \, . 
\end{equation}
\end{lemma}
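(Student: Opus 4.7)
The plan is to prove the two inclusions in~\eqref{eq:easy-bounds-geometric-sets} separately, both as direct consequences of convexity together with the definitions of $\Qe$ and $\GCU$ in~\eqref{eq:qeset} and~\eqref{eq:graph-closure}. The central geometric input is a pyramidal decomposition of each polytope into cones over its facets.

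For the rightmost inclusion, I would first show $\Qe \subset P_v \cup P_w$ for every edge $e = (v,w) \in \E$. By~\eqref{eq:qeset}, since $H_e = P_v \cap P_w \subset P_v$ and $v \in P_v$, convexity of $P_v$ gives $\conv(v, H_e) \subset P_v$, and symmetrically $\conv(w, H_e) \subset P_w$. Whenever $e \in \E[\overline{U}]$, at least one of $v, w$ lies in $\V[U]$ and the other, being adjacent to it, lies in $\V[U] \cup \partial \V[U] = \overline{\V[U]}$. Hence both polytopes are contained in $\bigcup_{u \in \overline{\V[U]}} P_u = \GCU$, yielding $\Qe \subset \GCU$.

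For the leftmost inclusion, the key step is the pyramidal decomposition
\begin{equation*}
P_v = \bigcup_{(v, w) \in \E} \conv(v, H_{vw}), \qquad v \in \V,
\end{equation*}
which follows from convexity and the fact that $v$ lies in the interior of $P_v$: for any $x \in P_v \setminus \{v\}$ the ray from $v$ through $x$ exits $P_v$ at some $y \in \partial P_v = \bigcup_{(v,w) \in \E} H_{vw}$, so $y \in H_{vw}$ for some $w$ and $x \in \conv(v, y) \subset \conv(v, H_{vw}) \subset \Qe$. Since the polytopes tile $\mathcal{D}$ and $U \subset \mathcal{D}$, every $x \in U$ belongs to some $P_v$ and hence to some $\Qe$ with $e$ incident to a vertex $v$ whose polytope meets $U$.

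The one remaining point is to certify $e \in \E[\overline{U}]$, i.e., that at least one endpoint of $e$ is in $\V[U]$. In the typical case $v \in U^\circ$ this is immediate. The main obstacle is the pathological case where $v \notin U^\circ$ even though $x \in P_v \cap U$; here I expect to exploit the pyramidal decomposition more carefully, choosing the cone $\conv(v, H_e)$ so that the opposite neighbor $w$ across $H_e$ sits on the side of the facet closer to the interior of $U$ and therefore lies in $\V[U]$, putting $v \in \partial \V[U]$ and $e \in \E[\overline{U}]$. Once this is settled, the inclusion $\conv(v, H_e) \subset \Qe$ finishes the argument. Aside from this single geometric verification, both inclusions are essentially definition-chasing.
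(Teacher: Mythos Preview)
Your approach is the paper's: the single observation that convexity gives the pyramidal decomposition $P_v = \bigcup_{(v,w)\in\E} \conv(v,H_{vw})$, hence $P_v \subset \bigcup_{e \ni v} \Qe$. The paper's proof is one sentence to exactly this effect and does not even spell out the right inclusion separately.

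The worry you flag about the left inclusion is genuine, but your proposed patch cannot work, and in fact nothing can without an extra hypothesis. Take $U$ to be a small open ball contained in the interior of a single polytope $P_v$ but not containing the vertex $v$. Then $\V[U]=\emptyset$, hence $\E[\overline U]=\emptyset$ and $\bigcup_{e\in\E[\overline U]}\Qe=\emptyset$, yet $U\ne\emptyset$: the left inclusion is literally false as stated. Even when $\V[U]\ne\emptyset$, your idea of ``choosing the cone $\conv(v,H_e)$ so that $w\in\V[U]$'' fails because the cone containing $x$ is dictated by where the ray from $v$ through $x$ exits $P_v$; you do not get to choose it. The paper's one-line proof glosses over this entirely, and in applications it is harmless: the lemma is invoked only in the proof of Theorem~\ref{theorem:2d}, to pass from $\sum_{e\in\E[\overline U]} d\,\vol_d(\Qe)$ to a multiple of $\vol_d(\GCU)$, and that uses only the right inclusion (together with the fact that the pyramids $\conv(v,H_e)$ for fixed $v$ have disjoint interiors inside $P_v$). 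Your argument for the right inclusion is complete and correct.
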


\begin{proof}

The bound~\eqref{eq:easy-bounds-geometric-sets} follows immediately from the fact, by convexity, that dual polytopes cover polytopes,~$P_v \subset \bigcup_{e=(w,v) \in \E} \Qe$, see Figure~\ref{fig:twod-qe}. 
\end{proof}

\subsection{Convergence of the energy} 
Given a subset of vertices~$A$ with~$\overline{A} \subset \V$ and a function~$f: \overline{A} \to \R$, the~\emph{Dirichlet energy}
of~$f$ is defined as 
 \begin{equation} \label{eq:dir.energy}
	\D(f, A) :=  \sum_{e \in \E[\overline{A}]} \cond(e)^{-1} |\anabla f(e)|^2  \, . 
\end{equation} 
When~$U \subset \Rd$, we write
\[
\D(f, U) := \D(f, \V[U]) \,. 
\]
This definition of energy may also be thought of as a discrete Sobolev norm adapted to the geometry of the orthogonal tiling graph, see, \eg,~\cite[Definition 9.3]{eymard2000finite}.

We show the energy of the difference of a continuum harmonic function and its discrete harmonic extension is small. This is well known in the finite volume literature, see for example,~\cite[Theorem 9.3]{eymard2000finite}.
\begin{prop}
	\label{prop:energy-bound}
	Let~$U$ be a bounded domain with~$\overline{U} \subset \mathcal{D}$ and suppose that~$h_C \in C^{2}(\GCU)$ satisfies~$\CLap h_C = 0$ on~$\GCU$ and let~$h_D: \overline{\V[U]} \to \R$ solve 
	\[
	\left\{
	\begin{aligned}
		&  \aDelta h_D = 0 & \mbox{in} & \ \V[U] \,,
		\\
		& h_D = h_C & \mbox{on} & \  \partial \V[U] \, . 
	\end{aligned}
	\right.
	\]
	Then, with
	\[
	\eps := \sup_{v \in \overline{\V[U]}} \diam(P_v)
	\quad \mbox{and} \quad M := \sup_{\GCU} |\Cnabla^2 h_C| 
	\]
	we have the bound
	\begin{equation}
		\label{eq:dirichlet-energy-convergence-bound}
	\D(h_D - h_C, U) \leq 9  M^2 \eps^2 \sum_{e \in \E[\overline{U}]} d \vol_{d}(\Qe)   \, .
	\end{equation}
\end{prop}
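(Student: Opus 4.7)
The plan is to apply the dual variational principle (Lemma~\ref{lemma:dual-variational-principle}) to $f := h_D - h_C$. By construction $f$ vanishes on $\partial \V[U]$, and $\div(\anabla f) = \aDelta h_D - \aDelta h_C = -\aDelta h_C$ on $\V[U]$. The lemma then reduces the proof to exhibiting a competitor vector field $\theta \colon \E[\overline{U}] \to \R$ with $\div \theta = -\aDelta h_C$ on $\V[U]$ whose energy $\sum_e \cond(e)^{-1} \theta(e)^2$ matches the right-hand side of~\eqref{eq:dirichlet-energy-convergence-bound}.

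A natural choice is the consistency error of the finite-volume scheme. For an edge $e = (w,v)$, set $\eta_e := (w-v)/\|w-v\|$; by the orthogonal tiling property this is a unit vector normal to $H_e$ agreeing with the outward unit normal to $P_v$ along $H_e$. I define
\begin{equation*}
	\theta(e) := \int_{H_e} \Cnabla h_C(x) \cdot \eta_e \, dH_e(x) - \anabla h_C(e),
\end{equation*}
which is antisymmetric since swapping $w,v$ reverses the sign of both terms. To verify that $\div \theta = -\aDelta h_C$ on $\V[U]$, I would apply the continuum divergence theorem on each polytope $P_v$ with $v \in \V[U]$: harmonicity of $h_C$ and the decomposition $\partial P_v = \bigcup_{(w,v) \in \E} H_e$ yield $\sum_{(w,v)} \int_{H_e} \Cnabla h_C \cdot \eta_e \, dH_e = 0$, so the first term of $\theta$ contributes nothing to $\div \theta(v)$, leaving $-\aDelta h_C(v)$.

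The main technical step is the pointwise bound $|\theta(e)| \leq C M \eps \vol_{d-1}(H_e)$ for a universal constant $C$. Rewriting
\begin{equation*}
	\theta(e) = \int_{H_e} \left[\Cnabla h_C(x) \cdot \eta_e - \frac{h_C(w) - h_C(v)}{\|w-v\|} \right] dH_e(x),
\end{equation*}
I would use the orthogonality $(w - v) = \|w-v\| \eta_e$ to reduce the problem to the $\eta_e$-component of $\Cnabla h_C$. Expressing $h_C(w) - h_C(v)$ as an integral of $\Cnabla h_C \cdot \eta_e$ along a path from $v$ to $w$ (the perpendicular straight segment, or a bent path through a point of $H_e$ that remains inside $P_v \cup P_w \subset \GCU$ by convexity of the polytopes), the bracketed integrand becomes the difference of $\Cnabla h_C \cdot \eta_e$ at two points within Euclidean distance $O(\eps)$ of each other (since $\diam(P_v), \diam(P_w), \|w-v\|$ are all $O(\eps)$); the Lipschitz bound $\|D \Cnabla h_C\|_\infty \leq M$ then gives the claimed estimate.

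Given this bound, combine with $\cond(e)^{-1} = \|w-v\|/\vol_{d-1}(H_e)$ and Lemma~\ref{lemma:geometry-bound} to obtain
\begin{equation*}
	\cond(e)^{-1} \theta(e)^2 \leq C^2 M^2 \eps^2 \|w-v\| \vol_{d-1}(H_e) = C^2 M^2 \eps^2 \cdot d \vol_d(\Qe);
\end{equation*}
summing over $e \in \E[\overline{U}]$ and invoking Lemma~\ref{lemma:dual-variational-principle} completes the proof. The main obstacle is the linear-in-$\eps$ bound on $|\theta(e)|$: a naive Taylor expansion about a single point of $H_e$ only gives $|\theta(e)| \leq M \eps^2 \cond(e)$, which is insufficient when $\|w-v\| \ll \eps$. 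Overcoming this requires genuinely exploiting the perpendicularity $(w-v) \perp H_e$ that is characteristic of orthogonal tilings.
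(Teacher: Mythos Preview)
Your proposal is correct and follows essentially the same route as the paper: the same competitor field $\theta(e)=\int_{H_e}\Cnabla h_C\cdot\eta_e-\anabla h_C(e)$, the same use of the continuum divergence theorem to show the flux term is divergence-free, and the same application of Lemma~\ref{lemma:dual-variational-principle} followed by Lemma~\ref{lemma:geometry-bound}.

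The only remark concerns your closing ``main obstacle''. The paper dispatches the linear-in-$\eps$ bound more simply than your path-integral suggestion: Taylor-expand $h_C$ about the \emph{endpoint} $w$ (rather than an arbitrary point of $H_e$). This gives $h_C(w)-h_C(v)=\Cnabla h_C(w)\cdot(w-v)+O(M\|w-v\|^2)$, so after dividing by $\|w-v\|$ the remainder is $O(M\|w-v\|)\le O(M\eps)$ and never sees $\eps^2/\|w-v\|$. Then $\Cnabla h_C(y)\cdot\eta_e-\Cnabla h_C(w)\cdot\eta_e=O(M\|y-w\|)=O(M\eps)$ by the mean value inequality, yielding the integrand bound $3M\eps$. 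Your bent-path argument also works and has the minor virtue of manifestly staying inside $P_v\cup P_w\subset\GCU$, but no genuine obstacle remains once one expands about an endpoint.
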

\begin{proof}
	Consider the vector field~$\tilde \theta: \E[\overline{U}] \to \R$ defined by 
	\[
	\tilde \theta(w,v) = \int_{P_w \cap P_v} \frac{\Cnabla h_C(y) \cdot (w-v)}{\|w - v\|} \dvol_{d-1}(y)  \qquad \forall (w,v) \in \E[\overline{U}] \, ,
	\]
	and observe that, by the divergence theorem, for every~$v \in \V[U]$ 
	\begin{align*}
	(\div \tilde \theta)(v) &= \sum_{(w,v) \in \E} \tilde \theta(w,v) \\
	&= \sum_{(w,v) \in \E} \int_{P_w \cap P_v} \frac{\Cnabla h_C(y) \cdot (w-v)}{\|w - v\|}  \dvol_{d-1}(y)
	= \int_{P_v}\CLap h_C(x) \dvol_{d}(x)= 0 \, . 
	\end{align*}
	Define the vector field~$\theta: \E[\overline{U}] \to \R$ by
	\[
	\theta(w,v) := \tilde \theta(w,v) - \anabla h_C(w,v)  \quad \forall (w,v) \in \E[\overline{U}] \, . 
	\]
	Let~$f := h_D - h_C$ and observe that by the above two displays and the fact that~$h_D$ is discrete harmonic, we have that on~$\V[U]$, 
	\[
	\div(\theta) = \div(- \anabla h_C) = \div(\anabla h_D - \anabla h_C) =  \div(\anabla f) \, . 
	\]
	Consequently, since~$f$ is zero on~$\partial \V[U]$, we have by Lemma~\ref{lemma:dual-variational-principle} that
	\begin{equation}
		\label{eq:bound-on-d-by-flow}
	\D(f, U) \leq \sum_{e \in \E[\overline{U}]} \cond(e)^{-1} \theta(e)^2 \, , 
\end{equation}
	and so it suffices to bound the sum on the right. By definition of~$\anabla f$ we have 
	\begin{align*}
	\theta(w,v)  
	&= \int_{P_w \cap P_v} \frac{\Cnabla h_C(y) \cdot (w-v) - (h_C(w) - h_C(v))}{\|w - v\|} \dvol_{d-1}(y) \, ,
	\end{align*}
	and so 
	\begin{align}
	&\cond(w,v)^{-1} \theta(w,v)^2  \notag \\
	&= \|w-v\| \vol_{d-1}(P_w \cap P_v) \times  \notag \\
	&\qquad  \left( \frac{1}{\vol_{d-1}(P_w \cap P_v)} \int_{P_w \cap P_v} \frac{\Cnabla h_C(y) \cdot (w-v) - (h_C(w) - h_C(v))}{\|w - v\|} \dvol_{d-1}(y) \right)^2  
	\label{eq:bound-on-conductance}
	\, . 
	\end{align}
	By Taylor's theorem 
	\[
	|h_C(w) - (h_C(v) + \Cnabla h_C(w) \cdot (w - v))| \leq \frac{M}{2} \|w-v\|^2 
	\]	
	and the mean value inequality, 
	\[
	|(\Cnabla h_C(y) - \Cnabla h_C(y')) \cdot (w-v) | \leq M \ \|y-y'\| \  \|w-v\|  \qquad \forall y,y' \in P_w \cup P_v \, . 
	\]
	The previous two displays and the triangle inequality yield 
	\begin{equation}
	\label{eq:bound-on-flux-difference}
	\sup_{y \in P_w \cap P_v} \frac{|\Cnabla h_C(y) \cdot (w-v) - (h_C(w) - h_C(v))|}{\|w - v\|} \leq 3 M \eps \, . 
	\end{equation}	
	Combining~\eqref{eq:bound-on-d-by-flow},~\eqref{eq:bound-on-conductance}, and~\eqref{eq:bound-on-flux-difference} with Lemma~\ref{lemma:geometry-bound} completes the proof.
\end{proof}

\section{Iterating to get a pointwise bound} 
\label{sec:iteration}
In this section we prove Theorem~\ref{theorem:convergence-of-the-dirichlet-problem} under Hypothesis~\ref{item:cont-neighborhood}. We start by converting the energy estimate (Proposition~\ref{prop:energy-bound}) into a pointwise bound on columns (Proposition~\ref{prop:energy-poincare-bound})
using a Poincar\'e-type bound (Lemma~\ref{lemma:ptype-inequality}).

We then observe that this leads, by planarity, to a short proof when~$d=2$ (Theorem~\ref{theorem:2d}).
In higher dimensions, we use Hypotheses~\ref{item:vol} or~\ref{item:diam}
together with an iteration to prove the claims in Theorems~\ref{theorem:generaldim} or~\ref{theorem:generaldim-decay} respectively.

\subsection{Pointwise bound on columns}
\begin{figure}
	\includegraphics[width=0.7\textwidth]{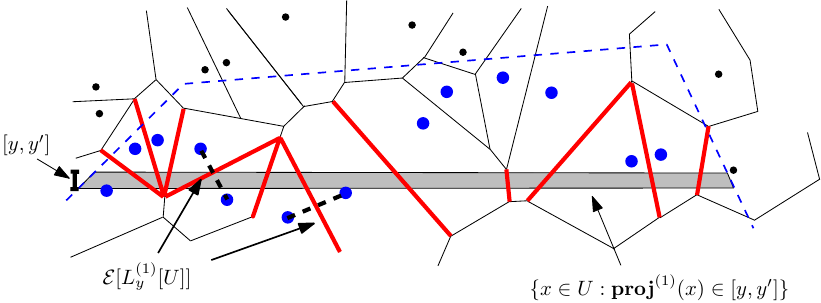}
	 \caption{Part of Figure~\ref{fig:induced-subgraph} is shown along with
		(red) facets which have projections intersecting a small (gray) interval~$[y,y']$. Two edges in~$\E[\Lyone[U]]$ are displayed.} 
			\label{fig:project-lines}
\end{figure}

We record a bound obtained from the proof of the Poincar\'e inequality. The discrete Poincar\'e inequality which this leads
to may be found in~\cite[Lemma 9.1]{eymard2000finite}.

For~$x \in \Rd$ and~$j \in \{1, \ldots, d\}$ we write~$\projj(x)  = (x_1, \ldots, x_{j-1}, x_{j+1}, \ldots, x_d)$ for the orthogonal projection of~$x$ onto~$\R^{d-1}$ obtained by removing the~$j$th coordinate. For a domain~$U \subset \Rd$,~$j \in \{1, \ldots, d\}$, and~$y \in \R^{d-1}$, we write
\begin{equation}
	\begin{aligned}
	\label{eq:line-project}
	\E[\Lyj[U]] &= \{ e \in \E[\overline{U}] : y \in \projj(H_e) \} \\
	\Lyj[U] &= \{v \in \V[U]: v \mbox{ is an endpoint of~$e$ for some~$e \in \E[\Lyj[U]]$} \} \, ,
	\end{aligned} 
\end{equation}
for the set of edges (resp. endpoints of edges) adjacent to~$U$ with projected facets which intersect~$y$ --- see Figure~\ref{fig:project-lines}. 
Note that there can be edges in~$\E[\Lyj[U]]$ whose projections do not contain $y$.

\begin{lemma}[Poincar\'e-type inequality]
	\label{lemma:ptype-inequality}
	For every bounded domain~$U$ with~$\overline{U} \subset \mathcal{D}$, coordinate~$j \in \{1, \ldots, d\}$, and function~$f: \V[U] \to \R$ which is zero on~$\partial \V[U]$ we have for every~$k > 0$
	\[
	\vol_{d-1}\left( y \in \projj(U): \max_{v \in \Lyj[U]} |f(v)| > k    \left( \D(f,U) \sum_{e \in \E[\overline{U}]} d   \vol_{d}(\Qe) \right)^{\! \!  \! \!\nicefrac12} \right) \leq k^{-1}  \, . 
	\]
\end{lemma}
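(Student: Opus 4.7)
The plan is to prove the stronger integrated bound
\[
\int_{\projj(U)} \max_{v \in \Lyj[U]} |f(v)| \, dy \;\leq\; \left(\D(f,U) \sum_{e \in \E[\overline U]} d \vol_d(\Qe)\right)^{\!1/2},
\]
from which the conclusion follows immediately by Markov's inequality applied to the nonnegative function $y \mapsto \max_{v \in \Lyj[U]}|f(v)|$.

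\emph{Step 1 (Telescope along lines).} Fix $j$ and let $y \in \projj(U)$ be outside the $\vol_{d-1}$-null set of parameters for which $L_y := \projj^{-1}(y)$ is tangent to some facet $H_e$. For $v \in \Lyj[U]$, the line $L_y$ meets the interior of $P_v$, and by walking along $L_y$ in the $+\vec e_j$ direction we enter a sequence of polytopes $P_v = P_{v_0}, P_{v_1}, \dots, P_{v_m}$ through facets $H_{e_0}, \dots, H_{e_{m-1}}$ until we first reach $v_m \in \partial \V[U]$, at which $f(v_m) = 0$. Each $e_i \in \E[\Lyj[U]]$ because $y \in \projj(H_{e_i})$. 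Telescoping,
\[
|f(v)| \;=\; \Bigl|\sum_{i=0}^{m-1} \nabla^{\mathcal{G}} f(e_i)\Bigr| \;\leq\; \sum_{e \in \E[\Lyj[U]]} |\nabla^{\mathcal{G}} f(e)|,
\]
so maximizing over $v$, $\max_{v \in \Lyj[U]} |f(v)| \leq \sum_{e \in \E[\Lyj[U]]} |\nabla^{\mathcal{G}} f(e)|$.

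\emph{Step 2 (Integrate, then apply Cauchy-Schwarz).} Integrating in $y$ and applying Fubini,
\[
\int_{\projj(U)} \max_{v \in \Lyj[U]} |f(v)| \, dy \;\leq\; \sum_{e \in \E[\overline U]} \vol_{d-1}(\projj(H_e))\, |\nabla^{\mathcal{G}} f(e)|.
\]
Cauchy-Schwarz with weights $\cond(e)$ bounds the right side by
\[
\left(\sum_{e \in \E[\overline U]} \cond(e)^{-1} \vol_{d-1}(\projj(H_e))^2\right)^{\!1/2} \left(\sum_{e \in \E[\overline U]} \cond(e) |\nabla^{\mathcal{G}} f(e)|^2\right)^{\!1/2}.
\]
Since $\anabla f(e) = \cond(e)\, \nabla^{\mathcal{G}} f(e)$, the second factor is exactly $\D(f,U)^{1/2}$.

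\emph{Step 3 (Geometric reduction).} Because the tiling is orthogonal, the unit normal $\nu_e$ to $H_e$ is parallel to $w-v$, and $\vol_{d-1}(\projj(H_e)) = \vol_{d-1}(H_e)|\nu_e \cdot \vec e_j| \leq \vol_{d-1}(H_e)$. Using $\cond(e) = \vol_{d-1}(H_e)/\|e\|$ and Lemma~\ref{lemma:geometry-bound},
\[
\cond(e)^{-1} \vol_{d-1}(\projj(H_e))^2 \;\leq\; \frac{\|e\|}{\vol_{d-1}(H_e)}\, \vol_{d-1}(H_e)^2 \;=\; \|e\|\vol_{d-1}(H_e) \;=\; d\,\vol_d(\Qe).
\]
Summing over $e \in \E[\overline U]$ controls the first factor by $\sum_e d \vol_d(\Qe)$, completing the proof of the integrated bound.

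The main technical point is Step 1: choosing a consistent telescoping path and verifying that excluding $y$ for which $L_y$ is tangent to a facet is harmless (the tangency parameters form a $\vol_{d-1}$-null set). If $L_y \cap \GCU$ has several connected components, one applies the argument to the component containing $P_v$; the line exits this component on either side by crossing a facet with a vertex in $\partial \V[U]$, where $f$ vanishes, so the telescoping terminates as required.
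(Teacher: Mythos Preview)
Your proof is correct and follows essentially the same route as the paper's: bound $\max_{v\in\Lyj[U]}|f(v)|$ by $\sum_{e\in\E[\Lyj[U]]}|\nabla^{\mathcal G}f(e)|$ via telescoping along the line $\projj^{-1}(y)$, integrate in $y$ and use Fubini, apply Cauchy--Schwarz with weights $\cond(e)$, invoke Lemma~\ref{lemma:geometry-bound} to recognize $\|e\|\vol_{d-1}(H_e)=d\vol_d(\Qe)$, and conclude by Markov/Chebyshev. The only cosmetic differences are that you make explicit the bound $\vol_{d-1}(\projj(H_e))\le\vol_{d-1}(H_e)$ (the paper absorbs this into the Fubini step) and the null set of tangential parameters, and you phrase the final inequality as an $L^1$ bound on $y\mapsto\max_v|f(v)|$ rather than on $y\mapsto\sum_e|\nabla^{\mathcal G}f(e)|$ before applying Markov.
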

\begin{proof}

	By interchanging summation and integration, 
	\[
		\int_{\projj(U)} \sum_{e \in \E[\Lyj[U]]} |\nabla^{\mathcal{G}} f(e)| \dvol_{d-1}(y)
		 \leq \sum_{e \in \E[\overline{U}]} |\nabla^{\mathcal{G}} f(e)| \vol_{d-1}(H_e)  \, . 
	\]
	Consequently, by the Cauchy–Schwarz inequality, 
	\begin{align*}
		&\int_{\projj(U)} \sum_{e \in \E[\Lyj[U]]} |\nabla^{\mathcal{G}} f(e)| \dvol_{d-1}(y) \\
		&\leq  \sqrt{\sum_{e \in \E[\overline{U}]} |\nabla^{\mathcal{G}} f(e)|^2 \cond(e)} \sqrt{ \sum_{e \in \E[\overline{U}]}    \cond^{-1}(e) \vol_{d-1}(H_e)^2	} \\
		&=  \sqrt{\sum_{e \in \E[\overline{U}]}  |\nabla^{\mathcal{G}} f(e)|^2 \cond(e)} \sqrt{ \sum_{e \in \E[\overline{U}]}   d \vol_{d}(\Qe)	} \, , 
	\end{align*}
	where in the last equality we used Lemma~\ref{lemma:geometry-bound}. This implies
	\begin{equation}
		\label{eq:bound-on-sums-of-gradients}
		\int_{\projj(U)} \sum_{e \in \E[\Lyj[U]]} |\nabla^{\mathcal{G}} f(e)| \dvol_{d-1}(y) \leq   \D(f,U)^{\nicefrac12}   \left( \sum_{e \in \E[\overline{U}]} d   \vol_{d}(\Qe)  \right)^{\! \!  \! \!\nicefrac12} \, . 
	\end{equation}
	Observe that for~$\vol_{d-1}$-almost every~$y \in \projj(U)$, since~$f$ is zero on the boundary, by summing the gradient,
	\[
	\sum_{e \in \E[\Lyj[U]]} |\nabla^{\mathcal{G}} f(e)| < \delta \implies \max_{v \in \Lyj[U]} |f(v)| < \delta  \qquad \forall \delta > 0 \, . 
	\]
	Here we are using the fact that for~$\vol_{d-1}$-almost every~$y \in \projj(U)$ the vertices in~$\Lyj[U]$ are connected via the edges in $\E[\Lyj[U]]$.
	
	By the above two displays and Chebyshev's inequality, we have that, for all~$\delta> 0$
	\[
	\vol_{d-1} \left( y \in \projj(U): \max_{v \in \Lyj[U]} |f(v)| > \delta \right) \leq \delta^{-1}  \D(f,U)^{\nicefrac12}   \left( \sum_{e \in \E[\overline{U}]} d   \vol_{d}(\Qe)  \right)^{\! \!  \! \!\nicefrac12}  \, . 
	\]
	This completes the proof. 
\end{proof}

By combining the above inequality together with Proposition~\ref{prop:energy-bound} we obtain a pointwise bound on the 
difference between the discrete and continuous harmonic functions on an arbitrarily dense set of columns. 
\begin{prop}[Closeness on columns]
	\label{prop:energy-poincare-bound}
	Let~$U$ be a bounded domain with~$\overline{U} \subset \mathcal{D}$ and suppose that~$h_C \in C^{2}(\GCU)$ satisfies~$\CLap h_C = 0$ on~$\GCU$ and let~$h_D: \overline{\V[U]} \to \R$ satisfy 
	\[
	\left\{
	\begin{aligned}
		&  \aDelta h_D = 0 & \mbox{in} & \ \V[U] \,,
		\\
		& h_D = h_C & \mbox{on} & \  \partial \V[U] \, . 
	\end{aligned}
	\right.
	\]
	Then, with
	\[
	\eps := \sup_{v \in \overline{\V[U]}} \diam(P_v)
	\quad \mbox{and} \quad M := \sup_{\GCU} |\Cnabla^2 h_C|
	\]
	we have for every~$k > 0$ and~$j \in \{1, \ldots, d\}$,
	\[
	\vol_{d-1} \left( y \in \projj(U): \max_{v \in \Lyj[U]} |h_D(v) - h_C(v)| > 3 k M \eps \sum_{e \in \E[\overline{U}]} d   \vol_{d}(\Qe)  \right) \leq k^{-1}  \, . 
	\]
\end{prop}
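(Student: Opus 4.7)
The plan is to combine the two ingredients already established in the section: the energy estimate in Proposition~\ref{prop:energy-bound} and the Poincar\'e-type measure bound in Lemma~\ref{lemma:ptype-inequality}. Both apply to the function $f := h_D - h_C$, which vanishes on $\partial \V[U]$ by the boundary condition imposed on $h_D$.

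First I would dispose of the edge case $M=0$: by Lemma~\ref{lemma:linear-lie-in-kernel}, any affine function on $\Rd$ is $\aDelta$-harmonic, and if $M = 0$ on the connected set $\GCU$ then $h_C$ is affine, so $h_D = h_C$ identically on $\overline{\V[U]}$ by uniqueness of the discrete Dirichlet problem; the claim is then trivial. Hence we may assume $M>0$, at which point Proposition~\ref{prop:energy-bound} applies with $M\vee 1 = M$ and gives
\[
\D(f,U) \leq 9 M^2 \eps^2 \sum_{e \in \E[\overline{U}]} d\,\vol_d(\Qe).
\]

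Next I would feed this bound into Lemma~\ref{lemma:ptype-inequality}, which applies because $f$ vanishes on $\V[\partial U]$. Writing $S := \sum_{e \in \E[\overline{U}]} d\,\vol_d(\Qe)$ for brevity, the lemma yields, for every $k>0$ and $j \in \{1,\dots,d\}$,
\[
\vol_{d-1}\!\left( y \in \projj(U) : \max_{v \in \Lyj[U]} |f(v)| > k \, (\D(f,U)\, S)^{1/2} \right) \leq k^{-1}.
\]
Substituting the energy bound and computing the square root gives $(\D(f,U)\,S)^{1/2} \leq 3M\eps \, S$, so the threshold on the left-hand side becomes $3kM\eps \, S$, which is exactly the quantity appearing in the statement.

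There is no genuine obstacle here — the proposition is designed as the direct marriage of the $L^2$-type energy estimate with the Poincar\'e-type transversal inequality, and the only bookkeeping is the $M$ versus $M\vee 1$ discrepancy noted above, which is handled by the affine edge case.
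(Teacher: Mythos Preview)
Your approach is exactly the paper's: the proof there is the single sentence ``immediate from Lemma~\ref{lemma:ptype-inequality} and Proposition~\ref{prop:energy-bound},'' and you have correctly identified the two ingredients and how they combine.

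One slip: after disposing of $M=0$ you write ``we may assume $M>0$, at which point Proposition~\ref{prop:energy-bound} applies with $M\vee 1 = M$.'' This is false for $0<M<1$. The fix is not your affine edge case but rather the observation that the \emph{proof} of Proposition~\ref{prop:energy-bound} (specifically the Taylor and mean-value estimates leading to~\eqref{eq:bound-on-flux-difference}) never uses the $\vee 1$, so the energy bound~\eqref{eq:dirichlet-energy-convergence-bound} already holds with $M=\sup_{\GCU}|D^2h_C|$; the $\vee 1$ in that proposition's statement is harmless slack. With that correction your argument goes through verbatim.
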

\begin{proof}
	This is immediate from Lemma~\ref{lemma:ptype-inequality} and Proposition~\ref{prop:energy-bound}. 
\end{proof}

%

\subsection{Two dimensions}
We give a short proof of a strengthened version of the main result in~\cite{gurel2020dirichlet}.
Roughly, when~$d=2$, by taking~$k$ large enough, Proposition~\ref{prop:energy-poincare-bound} guarantees that there is a fine grid of ``good lines'' upon which the discrete and continuous harmonic functions~$h_D$ and~$h_C$ are close. On every rectangle formed by the grid, by smoothness,~$h_C$ is essentially constant. Thus, by the maximum principle,~$h_D$ is close to~$h_C$ everywhere. 
\begin{figure}
	\includegraphics[width=0.5\textwidth]{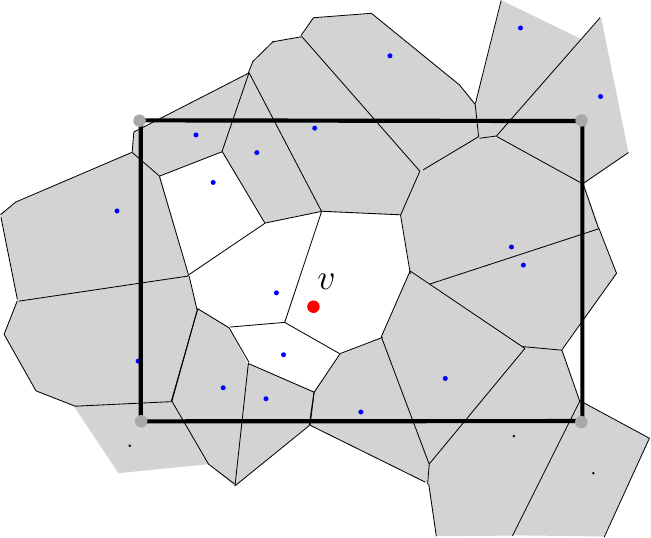}
	\caption{Illustration of the proof of Theorem~\ref{theorem:2d}. The square~$S$ is outlined in black, the cells for which the discrete harmonic function is close to the continuum one are in light gray. }	\label{fig:small-good-square}
\end{figure}

\begin{theorem}
	\label{theorem:2d}
	Let~$U$ be a bounded domain with~$\overline{U} \subset \mathcal{D} \subset \R^2$ and suppose~$h_C \in C^{2}(\GCU)$ satisfies~$\CLap h_C = 0$ on~$\GCU$ and let~$h_D: \overline{\V[U]} \to \R$ satisfy 
	\[
	\left\{
	\begin{aligned}
		&  \aDelta h_D = 0 & \mbox{in} & \ \V[U] \,,
		\\
		& h_D = h_C & \mbox{on} & \  \partial \V[U] \, . 
	\end{aligned}
	\right.
	\]
	Then, with
	\[
	\eps := \sup_{v \in \overline{\V[U]}} \diam(P_v) \quad L := \sup_{\GCU} |\Cnabla h_C| \quad \mbox{and} \quad M := \sup_{\GCU} |\Cnabla^2 h_C| \, , 
	\]
	we have the pointwise bound
	\[
	\sup_{v \in \V[U]} | h_D(v) - h_C(v) | \leq \inf_{k >0} \left( 6 k M \eps \vol_{d}(\GCU) + 2 (\sqrt{2} k^{-1} + \eps) L \right) \, .  
	\]
In particular, there exists~$C= C(U, M, L) <\infty$ so that
	\[
	\sup_{v \in \V[U]} | h_D(v) - h_C(v) |\leq  C \eps^{\nicefrac12}  \, . 
	\]
\end{theorem}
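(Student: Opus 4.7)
The plan is to leverage the columnwise bound from Proposition~\ref{prop:energy-poincare-bound} together with planarity and the discrete maximum principle. First I would simplify the sum $\sum_{e \in \E[\overline U]} d\,\vol_d(\Qe)$: since the cones $\conv(v,H_e)$ over edges $e$ incident to $v$ partition $P_v$ (by convexity of $P_v$ and the fact that $v$ is interior), summing over edges and then over their two endpoints gives $\sum_{e} \vol_d(\Qe)\leq \vol_d(\GCU)$, so the generic bound from Proposition~\ref{prop:energy-poincare-bound} specializes, with $d=2$, to: for each $j \in\{1,2\}$ and each $k>0$, there is a ``bad'' set $B_j\subset \projj(U)$ of Lebesgue measure at most $k^{-1}$ such that every $y\notin B_j$ satisfies $\max_{w\in\Lyj[U]}|h_D(w)-h_C(w)|\leq 6kM\eps\,\vol_d(\GCU)$.

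Fix a vertex $v\in\V[U]$. Because $B_1$ and $B_2$ each have measure at most $k^{-1}$, one can pick good values $x_0^-\leq v_1\leq x_0^+$ and $y_0^-\leq v_2\leq y_0^+$ with $x_0^+-x_0^-\leq k^{-1}$ and $y_0^+-y_0^-\leq k^{-1}$, so the rectangle $R=[x_0^-,x_0^+]\times[y_0^-,y_0^+]$ has diameter at most $\sqrt{2}\,k^{-1}$ and contains $v$. Now a case split: if $P_v$ touches one of the four sides of $\partial R$, then some facet incident to $v$ crosses a good line, so $v$ lies in $\Lyj[U]$ for that good $y$, and the columnwise bound applied to $v$ itself gives $|h_D(v)-h_C(v)|\leq 6kM\eps\,\vol_d(\GCU)$ directly. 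Otherwise $P_v\subset R$, which is the case where the maximum principle is needed.

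In the second case, let $A=\{w\in\V[U]:P_w\subset R\}$. Then $v\in A$ and $\aDelta h_D\equiv 0$ on $A$. For any $w\in\partial A$ I would verify: $w$ is adjacent in $\G$ to some $u\in A$, so $P_w$ and $P_u\subset R$ share a facet, and since $P_w\not\subset R$ the cell $P_w$ must cross one of the four good lines bounding $R$; this forces some facet of $P_w$ to cross that line, which puts $w\in\Lyj[U]$ for a good $y$. Hence the columnwise bound applies uniformly on $\partial A$. By the discrete maximum principle
\[
h_D(v)-h_C(v)\leq \max_{w\in\partial A}\bigl(h_D(w)-h_C(w)\bigr)+\max_{w\in\partial A}\bigl(h_C(w)-h_C(v)\bigr),
\]
with the symmetric lower bound. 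The first term is controlled by $6kM\eps\,\vol_d(\GCU)$. For the second, since $w$ lies within distance $\eps$ of $R$ and $v\in R$, one has $\|v-w\|\leq \sqrt{2}\,k^{-1}+\eps$; applying the Lipschitz bound from $L=\sup_{\GCU}|\Cnabla h_C|$ (the segment from $v$ to $w$ stays within $\GCU$ because $R$ is contained well inside it) yields $|h_C(w)-h_C(v)|\leq L(\sqrt{2}\,k^{-1}+\eps)$. Combining the two cases and absorbing the symmetric lower estimate into the factor of $2$ gives the stated $\inf_{k>0}$ bound. Finally, optimizing over $k$ by choosing $k\sim\eps^{-1/2}$ balances the terms $kM\eps\,\vol_d(\GCU)$ and $k^{-1}L$, yielding $\sup_{v}|h_D(v)-h_C(v)|\leq C\eps^{1/2}$.

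The main obstacle I expect is the geometric verification in the third paragraph that every $w\in\partial A$ lies on a good column; this uses planarity in an essential way (a topological/Jordan-curve style argument that a cell crossing $\partial R$ must have a facet crossing one of the four good lines) and would fail outright in higher dimensions, which is precisely why the iteration of Section~\ref{sec:iteration} is needed for $d\geq 3$. Everything else is a bookkeeping exercise in the discrete maximum principle and the Lipschitz continuity of $h_C$.
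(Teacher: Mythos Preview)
Your proposal is correct and follows essentially the same strategy as the paper's proof: combine the columnwise bound of Proposition~\ref{prop:energy-poincare-bound} with planarity and the discrete maximum principle. The only cosmetic difference is that the paper defines the rectangle $S$ as the product of the \emph{maximal bad} intervals in $Y^{(1)}, Y^{(2)}$ containing the coordinates of $v$ (so that vertices in $\partial\V[S\cap U]$ automatically meet a good column by maximality), whereas you pick four \emph{good} abscissae bounding $v$ and then do an explicit case split on whether $P_v\subset R$; these are dual descriptions of the same geometric picture, and your verification that every $w\in\partial A$ lies on a good column is exactly what the paper's ``by maximality'' clause encodes. Your preliminary observation that $\sum_e \vol_d(\Qe)\leq \vol_d(\GCU)$ (via the cones partitioning each $P_v$) is also what the paper uses when it cites~\eqref{eq:easy-bounds-geometric-sets}.
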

\begin{proof}
	 By Proposition~\ref{prop:energy-poincare-bound} we have, for each~$j \in \{1,2\}$ and every~$k > 0$, 
	\begin{equation}
		\label{eq:bound-on-measure-of-lines}
	\left| \underbrace{ \left\{y \in \projj(U): \max_{v \in \Lyj[U]} (h_D(v) - h_C(v)) > 6 k   M \eps \vol_{d}(\GCU) \right\} }_{=:Y^{(j)}} \right| \leq k^{-1}  \, , 
	\end{equation}
	where we also used~\eqref{eq:easy-bounds-geometric-sets}.
	Let~$v \in \V(U)$ be given and suppose~$h_D(v) - h_C(v) > 6 k M \eps \vol_{d}(\GCU)$. 
	
	We claim that the above display implies that there exists a rectangle~$S$, with~$v \in S$, such that
	\begin{equation}
		\label{eq:good-square-exists}
	\diam(S) \leq 2 \sqrt{2} k^{-1}  \quad \mbox{and} \max_{w \in \partial \V[S \cap U]  } (h_D(w) - h_C(w)) < 6 k  M \eps \vol_{d}(\GCU) \quad \forall k > 0 \, ,  
	\end{equation}
	see Figure~\ref{fig:small-good-square}. Indeed, let~$S = S^{(2)} \times S^{(1)}$ where $S^{(j)}$ is the largest interval in~${Y}^{(j)}$ containing~$v_i$. By~\eqref{eq:bound-on-measure-of-lines},~$\diam(S) \leq 2 \sqrt{2} k^{-1}$.  Also, by maximality of each interval, if~$w \in \partial \V[S \cap U] \setminus \partial \V[U]$, then~$(h_D(w) - h_C(w)) < 6 k  M \eps \vol_{d}(\GCU)$. If~$w \in \partial \V[U]$, then~$h_D(w) - h_C(w) = 0$. This establishes~\eqref{eq:good-square-exists}. 
	 
	 We also have by the mean value theorem that 
	\[
	\sup_{x, y \in \overline{\V[S \cap U]}} |h_C(x) - h_C(y) | \leq  (\diam(S) + 2 \eps) L \, .  
	\]
	Since~$v \in S$, by the maximum principle~$h_D(v) \leq h_D(w)$ for some vertex~$w \in \partial \V[S \cap U]$. Consequently, by the above display,~\eqref{eq:good-square-exists} and the triangle inequality we have, for all~$k > 0$, 
	\begin{align*}
	h_D(v) - h_C(v) &\leq h_D(w) - h_C(v)   \\
	&\leq h_D(w) - h_C(w) + 2 (\sqrt{2} k^{-1} + \eps) L \\
	&\leq 6 k M \eps \vol_{d}(\GCU) + 2 (\sqrt{2} k^{-1} + \eps) L \, . 
	\end{align*}
	A symmetric argument establishes the bound in the other direction. The ``in particular'' clause follows by choosing~$k = \eps^{-\nicefrac12}$. 
\end{proof}

\begin{figure}[!h]
\fbox{\includegraphics[width=0.25\textwidth]{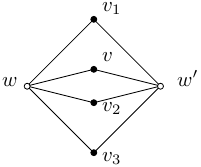}} \qquad
\fbox{\includegraphics[width=0.25\textwidth]{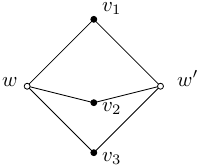}} \qquad
\fbox{\includegraphics[width=0.25\textwidth]{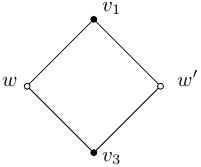}}
\caption{Series reduction of an orthodiagonal map; from left to right the vertices~$v$ and~$v_2$ in~$\V^{\bullet}$ of degree two are removed. The same labeling scheme as Figure~\ref{fig:orthogonal-diagonal} is used.}
\label{fig:series-reduction}
\end{figure}

\begin{figure}
\includegraphics[width=0.35\textwidth]{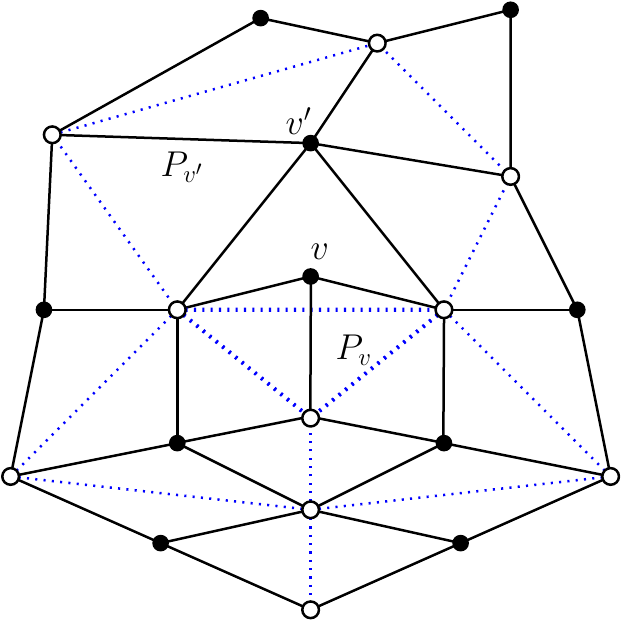}
\caption{A vertex~$v$ in an orthodiagonal map which is not contained in its corresponding polytope~$P_v$
but contained in a polytope of a neighbor~$P_{v'}$. The same labeling scheme as Figure~\ref{fig:orthogonal-diagonal} is used.}
\label{fig:concave-complication}
\end{figure}

\begin{remark}
\label{remark:orthodiagonalmaps}
Recall the definition of orthodiagonal map from Section~\ref{subsection:orthodiagonal}. In the case some quadrilaterals in~$\G$ are nonconvex, the construction there may fail 
to produce an orthogonal tiling. In this remark, we explain how our proof still applies to this case.

We first observe that we can use electric network laws to reduce to the case where all interior vertices in~$\G^{\bullet}$ and~$\G^{\circ}$ have degree at least three. An interior vertex in~$\G$ must be incident to at least two inner faces, so every interior vertex in~$\G^{\bullet}$ has degree at least two. An interior vertex~$v$ in~$\G^{\bullet}$ with exactly two neighbors~$v_1, v_2 \in \G^{\bullet}$ must share an edge in~$\E$ with exactly two vertices~$w, w' \in \V^{\circ}$. Thus, by the series law, any function~$f: \V^{\bullet} \to \R$ which is~$\mathbf{a}$-discrete harmonic must also be~$\mathbf{a}$-discrete harmonic on the graph where the vertex~$v$ and its two incident edges in~$\G$ is removed --- see Figure~\ref{fig:series-reduction}. Indeed, since~$v_1$ and~$v_2$ are neighbors of~$v$ and~$w-w'$ is orthogonal to~$v-v_1$ and~$v - v_2$, the vertex~$v$ lies on the line segment between~$v_1$ and~$v_2$; therefore
\[
\frac{\|v_1 - v_2\|}{\|w - w'\|} = \frac{\|v_1 - v\| + \|v - v_2\|}{\|w - w'\|}  = \frac{1}{\mathbf{a}(v_1, v)} + \frac{1}{\mathbf{a}(v_2, v)} \, . 
\]
Similarly, by the parallel law, we may reduce to the case there are no internal vertices in~$V^{\circ}$ of degree less than three. 
The result of Theorem~\ref{theorem:2d} on the reduced graph implies the result on the non-reduced graph. Indeed, arguing with the maximum principle for discrete harmonic functions and the mean-value property as in the proof of Theorem~\ref{theorem:2d}, if a discrete harmonic function is close to a continuum harmonic function on the reduced graph, then it must be close to the same continuum harmonic function on the deleted vertices.

Every vertex~$v \in \V^{\bullet}$ is now associated to a well-defined strictly convex polygon~$P_v$ with edges given by the opposite diagonals of the quadrilaterals
containing~$v$. The polygons~$\{P_v\}_{v \in \V^{\bullet}}$ have pairwise disjoint interiors. Every vertex~$v \in \V^{\bullet}$ 
is either contained in the interior of~$P_v$ or in the interior of~$P_{v'}$ where~$v' \in \V^{\bullet}$ is a neighbor of~$v$ --- see Figure~\ref{fig:concave-complication}.

We also define, for each edge~$e = (v,v') \in \mathcal{E}^{\bullet}$, the set~$Q_e$ to be the quadrilateral in~$\G$ with extremal vertices~$v$ and~$v'$. Note for convex quadrilaterals, this coincides with~$\Qe$ defined in~\eqref{eq:qeset}. 

With these two modified definitions, the proof of Theorem~\ref{theorem:2d} proceeds in the exact same way. 
\end{remark}

\subsection{All dimensions with smallness hypotheses}
We now iterate Proposition~\ref{prop:energy-poincare-bound} 
together with an assumption on tiny polytopes to go from smallness on columns to a pointwise bound.  The following bound on the measure of dual polytopes which correspond to an edge with large gradient will be used.

\begin{lemma}[Weighted gradient-energy estimate]
	\label{lemma:gradient-energy-estimate}
	For every bounded set of vertices~$A$ with~$\overline A \subset \V$ and function~$f: \overline{A} \to \R$  we have
	the bound
	\[
	 \sum_{e \in \E[\overline{A}]} d \vol_{d}(\Qe)  1\{ |\nabla^{\mathcal{G}} f(e)| > \delta\}  \leq \D(f, A) 	 \qquad \forall \delta \geq \sup_{e \in \E[\overline{A}]} \|e\| \, . 
	\]
\end{lemma}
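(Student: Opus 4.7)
The plan is to unpack both sides using the definitions of the Dirichlet energy, the conductance, and the volume identity from Lemma~\ref{lemma:geometry-bound}, and then observe that the hypothesis $\delta \geq \|e\|$ converts the pointwise gradient lower bound on each ``bad'' edge into a direct comparison between its contribution to the left-hand side and its contribution to $\D(f,A)$.

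More concretely, first I would rewrite the Dirichlet energy. Since $\anabla f(e) = \cond(e) \nabla^{\mathcal{G}} f(e)$ and $\cond(e) = \vol_{d-1}(H_e)/\|e\|$, we have
\[
\D(f,A) = \sum_{e \in \E[\overline{A}]} \cond(e)\, |\nabla^{\mathcal{G}} f(e)|^2 = \sum_{e \in \E[\overline{A}]} \frac{\vol_{d-1}(H_e)}{\|e\|}\, |\nabla^{\mathcal{G}} f(e)|^2.
\]
On the other hand, Lemma~\ref{lemma:geometry-bound} gives $d\,\vol_d(\Qe) = \|e\|\,\vol_{d-1}(H_e)$.

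Next, I would restrict attention to edges $e$ with $|\nabla^{\mathcal{G}} f(e)| > \delta$. For any such edge, since $\delta \geq \|e\|$ by hypothesis, we have $|\nabla^{\mathcal{G}} f(e)| > \|e\|$, and hence $|\nabla^{\mathcal{G}} f(e)|^2 > \|e\|^2$. Multiplying through by $\vol_{d-1}(H_e)/\|e\|$ gives
\[
\frac{\vol_{d-1}(H_e)}{\|e\|}\, |\nabla^{\mathcal{G}} f(e)|^2 \;>\; \|e\|\,\vol_{d-1}(H_e) \;=\; d\,\vol_d(\Qe).
\]
Summing this inequality over all $e \in \E[\overline{A}]$ with $|\nabla^{\mathcal{G}} f(e)| > \delta$ (and bounding the sum on the right by the full energy sum, since every term is nonnegative) yields the desired estimate.

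I expect no real obstacle: the lemma is essentially a Chebyshev-type bound in disguise, with the hypothesis $\delta \geq \sup_e \|e\|$ precisely calibrated to produce the factor $\|e\|\,\vol_{d-1}(H_e)$ that matches $d\,\vol_d(\Qe)$ via Lemma~\ref{lemma:geometry-bound}. The only point requiring any care is being clean about the inequality being strict versus non-strict on the indicator set, which is harmless after summing.
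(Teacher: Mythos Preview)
Your proposal is correct and essentially identical to the paper's proof: both unfold $\D(f,A)$ using $\cond(e)=\vol_{d-1}(H_e)/\|e\|$, use $|\nabla^{\mathcal G}f(e)|>\delta\geq\|e\|$ on the indicator set to replace $|\nabla^{\mathcal G}f(e)|^2/\|e\|$ by $\|e\|$, and then invoke Lemma~\ref{lemma:geometry-bound}. (One wording nit: after summing your displayed inequality over bad edges, it is the \emph{left}-hand partial energy sum that you bound by the full $\D(f,A)$, not the right.)
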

\begin{proof}
	We compute, using the assumption~$\delta \geq \sup_{e \in \E[\overline{A}]} \|e\|$, 
	\begin{align*}
		\D(f, A)  =  \sum_{e \in \E[\overline{A}]} \frac{\vol_{d-1}(H_e)}{\|e\|} |\nabla^{\mathcal{G}} f(e)|^2  
		&\geq \sum_{e \in \E[\overline{A}]} \frac{\vol_{d-1}(H_e)|}{\|e\|} \delta^2 1\{ |\nabla^{\mathcal{G}} f(e)| > \delta\} \\
		&\geq \sum_{e \in \E[\overline{A}]} \vol_{d-1}(H_e) \|e\|   1\{ |\nabla^{\mathcal{G}} f(e)| > \delta\}  \\
		&= \sum_{e \in \E[\overline{A}]} d \vol_{d}(\Qe)  1\{ |\nabla^{\mathcal{G}} f(e)| > \delta\}  \, , 
	\end{align*}
	where in the last step we used Lemma~\ref{lemma:geometry-bound}. 
\end{proof}

\begin{theorem}
		\label{theorem:generaldim}
	Let~$U$ be a bounded domain with~$\overline{U} \subset \mathcal{D} \subset \R^d$ and suppose~$h_C \in C^{2}(\GCU)$ satisfies~$\CLap h_C = 0$ on~$\GCU$ and let~$h_D: \overline{\V[U]} \to \R$ satisfy 
	\[
	\left\{
	\begin{aligned}
		&  \aDelta h_D = 0 & \mbox{in} & \ \V[U] \,,
		\\
		& h_D = h_C & \mbox{on} & \  \partial \V[U] \, . 
	\end{aligned}
	\right.
	\]
	Then, there exists~$C= C(U) < \infty$ such that, with
	\[
	\eps := \sup_{v \in \overline{\V[U]}} \diam(P_v) \quad L := \sup_{\GCU} |\Cnabla h_C| \quad \mbox{and} \quad M := \sup_{\GCU} |\Cnabla^2 h_C| \vee 1 \, , 
	\]
	and for any fixed~$K \geq 1$, which can depend on~$\eps$,
	\begin{equation}
		\begin{aligned}
		\label{eq:small-volume-assumption}
		\mathcal{A} := \{ v \in \overline{\V[U]} : \vol_{d}(P_v) < \eps^{K}  \}  \quad \mbox{and} \quad
		D := \sup_{\mbox{components $Q \subset \mathcal{A}$}} {\sum_{v \in Q} \diam(P_v)}  \, , 
		\end{aligned}
	\end{equation}
	where the supremum is over the connected components of the subgraph induced by~$\mathcal{A}$, we have, assuming~{$\eps  < (C M)^{-1}$},
	\begin{equation} 
		\label{eq:full-bound}
{	 \sup_{v \in \V[U]} | h_D(v) - h_C(v) | \leq C M K \eps(\log \eps^{-1}) + D L  \, .}
	\end{equation}
\end{theorem}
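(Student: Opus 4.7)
The plan is to iterate Proposition \ref{prop:energy-poincare-bound} on a nested sequence of shrinking sub-domains, stopping once the remaining region is small enough that the smallness hypothesis \eqref{eq:small-volume-assumption} takes over. Set $\tilde h^{(0)} := h_D$ and $W_0 := \V[U]$, and inductively, given a vertex set $W_n \subseteq \V[U]$ together with a function $\tilde h^{(n)}$ that is discrete harmonic on $W_n$ with boundary data $h_C$ on $\partial W_n$, let $V_n := \vol_d\bigl(\bigcup_{v \in \overline{W_n}} P_v\bigr)$, choose $k_n := c V_n^{-1}$ for a constant $c = c(\diam(U), d)$, and apply Proposition \ref{prop:energy-poincare-bound} to $\tilde h^{(n)}$ on $W_n$. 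This produces a per-step threshold $\delta_n = O(M \eps)$ and a new bad set
\[
W_{n+1} := \{ v \in W_n : |\tilde h^{(n)}(v) - h_C(v)| > \delta_n \}
\]
satisfying $\vol_{d-1}(\projj(W_{n+1})) \leq 1/k_n$ for every coordinate $j$. Let $\tilde h^{(n+1)}$ denote the discrete harmonic extension of $h_C$ to $W_{n+1}$.

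Two facts drive the iteration. First, since $\tilde h^{(n)}$ and $\tilde h^{(n+1)}$ are both discrete harmonic on $W_{n+1}$ and differ by at most $\delta_n$ on $\partial W_{n+1}$ (the former by the definition of $W_{n+1}$, the latter equalling $h_C$ there), the maximum principle yields $|\tilde h^{(n)} - \tilde h^{(n+1)}| \leq \delta_n$ on $W_{n+1}$, and telescoping gives $|h_D - \tilde h^{(n)}| \leq \sum_{m<n} \delta_m$ on $W_n$. Second, if $v \in W_{n+1}$ and $y \in \projj(P_v)$, then the fiber line through $y$ enters $P_v$ through a facet $H_e$ incident to $v$, so $v \in L_y^j$ and $|\tilde h^{(n)}(v)-h_C(v)|>\delta_n$ places $y$ in the bad-$y$ set produced by Proposition \ref{prop:energy-poincare-bound}. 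Consequently,
\[
V_{n+1} \leq \diam(U) \cdot \vol_{d-1}\Bigl(\bigcup_{v \in W_{n+1}} \projj(P_v)\Bigr) \leq \frac{\diam(U)}{k_n} \leq \frac{V_n}{2},
\]
by the choice of $k_n$, so $V_n$ halves at each step.

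Terminate at the first $N$ for which $V_N < \eps^K$; the geometric decay forces $N \leq C K \log \eps^{-1}$. At termination every tile in $W_N$ has volume at most $V_N < \eps^K$, so $W_N \subseteq \mathcal{A}$, and each connected component of $W_N$ lies inside a component of the subgraph induced by $\mathcal{A}$ whose total tile-diameter sum is at most $D$. The maximum principle applied to $\tilde h^{(N)}$ component-by-component then gives $|\tilde h^{(N)}(v) - h_C(v)| \leq L D$ for each $v \in W_N$. Combining with the telescoping estimate yields
\[
|h_D(v) - h_C(v)| \leq \sum_{n=0}^{N-1} \delta_n + L D \leq C M K \eps \log \eps^{-1} + L D
\]
for every $v \in \V[U]$, as required.

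The main technical step is the geometric decay $V_{n+1} \leq V_n/2$. It requires identifying the tiles of bad vertices with a subset of a bad-$y$ set supplied by the Poincar\'e-type inequality, recovering $d$-volume via a one-dimensional fiber estimate, and then calibrating $k_n := c V_n^{-1}$ so that the per-step error $\delta_n$ stays of order $M\eps$ (yielding cumulative error $O(M K \eps \log \eps^{-1})$ over $N = O(K \log \eps^{-1})$ iterations) while $V_n$ contracts by a factor of $2$. The auxiliary assumption $\eps < (CM)^{-1}$ merely ensures $V_0 > \eps^K$ so that the iteration has a nontrivial number of steps to execute.
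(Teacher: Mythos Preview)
Your overall strategy---iterating the closeness-on-columns estimate on a nested sequence of bad sets, telescoping via the maximum principle, and terminating once the remaining volume drops below $\eps^K$---is the same as the paper's. But there is a genuine gap in the contraction step $V_{n+1}\le V_n/2$.

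The quantity that actually enters Proposition~\ref{prop:energy-poincare-bound} is $\Sigma_n:=\sum_{e\in\E[\overline{W_n}]}d\vol_d(\Qe)$, and this sum runs over all edges with \emph{at least one} endpoint in $W_n$, in particular all boundary edges $e=(v,w)$ with $v\in W_{n+1}$ and $w\in\partial W_{n+1}$. For such $e$ the dual polytope $\Qe$ contains the cone $\conv(w,H_e)\subset P_w$, whose volume lies entirely in the boundary tile $P_w$. Your projection argument establishes only that $\projj(P_v)$ lies in the bad-$y$ set for $v\in W_{n+1}$; it gives no information about $\projj(P_w)$ for $w\in\partial W_{n+1}$. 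Hence your displayed inequality
\[
V_{n+1}\le\diam(U)\cdot\vol_{d-1}\Bigl(\bigcup_{v\in W_{n+1}}\projj(P_v)\Bigr)
\]
is false as written (the left side includes boundary tiles, the right side does not), and redefining $V_n$ without the closure just relocates the problem: $\Sigma_{n+1}$ still picks up the uncontrolled boundary contribution $\sum_{e\in\partial\E[W_{n+1}]}\vol_d(\conv(w,H_e))$, so the threshold $\delta_{n+1}$ need not stay $O(M\eps)$ and the iteration does not close.

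The paper fixes this with two ingredients you omit. First, it \emph{trims} the bad set: $S_{i+1}$ is $N_{i+1}$ with those vertices removed that are joined to $\partial N_{i+1}$ by an edge of small gradient $|\nabla^{\mathcal G}(h_i-h_C)|\le 2\eps$. This guarantees every surviving boundary edge of $S_{i+1}$ has gradient exceeding $2\eps\ge\sup_e\|e\|$. Second, the weighted gradient--energy estimate (Lemma~\ref{lemma:gradient-energy-estimate}) then bounds the total $\Qe$-volume carried by those large-gradient boundary edges by the Dirichlet energy $\D(h_i-h_C,U_i)$, which Proposition~\ref{prop:energy-bound} controls by $9M^2\eps^2\Sigma_i$. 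The resulting recursion has two terms,
\[
f(i+1)\le CA^{-1}f(i)+9M^2\eps^2 f(i),
\]
and the hypothesis $\eps<(CM)^{-1}$ is exactly what renders the boundary term harmless---not, as you suggest, merely a device to ensure $V_0>\eps^K$. Your sketch captures the interior (Poincar\'e) term but entirely misses the boundary (energy) term; the trimming and Lemma~\ref{lemma:gradient-energy-estimate} are essential, not optional.
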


	Note that if for some large~$K$ the set~$\mathcal{A}$ in~\eqref{eq:small-volume-assumption} is empty, then the second error term 
	in~\eqref{eq:full-bound} is zero. 
\begin{proof}[Proof of Theorem~\ref{theorem:generaldim}]
	We consider a sequence of functions~$\{h_i\}_{i \geq 0}$ on~$\V[U]$ which converges to~$h_C$ as~$i \to \infty$. 
	Each such function will be the ``discrete harmonic extension'' of~$h_C$ to a subset~$S_i \subset \V[U]$.
	In the first step we define the sequence. In the second step we bound~$|h_D - h_C|$
	in terms of~$|h_i - h_D|$ and the diameter of the components of~$S_i$ using the maximum principle and the continuity of $h_C$.  In the third step, we estimate this diameter using~\eqref{eq:small-volume-assumption} and Proposition~\ref{prop:energy-poincare-bound}. 
	\smallskip

	\emph{Step 1.} We define the iteration. For a set of vertices~$V$ with~$V \subset \V[U]$ we denote the harmonic extension of 
	~$h_C$ to~$V$ by~$\mathcal{H}(V)$, \ie, $\mathcal{H}(V) = g$ where~$g$ is the unique solution~$g: \overline{V} \to \R$ of 
	\[
	\left\{
	\begin{aligned}
		&  \aDelta g = 0 & \mbox{in} & \ V \,,
		\\
		& g = h_C & \mbox{on} &  \ \overline{\V[U]} \setminus V  \, . 
	\end{aligned}
	\right.
	\]
	{Fix a large constant~$A > 1$, depending on~$U$, to be determined below. }
	Start with~$h_0 := h_D$ and recursively set, for~$i \geq 0$, 
	\[
	h_{i+1} = \mathcal{H}(S_i)
	\]
	where~$S_0 := \V[U]$ and
	\begin{equation}
		\label{eq:def-of-bad-set}
		\begin{aligned}
				N_{i+1} &:= \{ v \in \V[U] : |(h_{i} - h_C)(v)| > {A M \eps}    \}  \, ,  \\
			S_{i+1} &:=  N_{i+1}  \setminus \{ v \in N_{i+1} : \exists w \in \partial N_{i+1} \, \, \mbox{with} \, \, |\nabla^{\mathcal{G}}(h_i - h_C)(v,w)| \leq 2 \eps \}    \, .
		\end{aligned}
	\end{equation}

	 That is,~$S_{i+1}$ is defined to be a slightly trimmed subset of~$N_{i+1}$, which itself is a subset of~$S_i$ where~$|h_{i}-h_C|$ is large.
	 The trimming is used in~\eqref{eq:bounding-polytope-volume-iteration} below so that Lemma~\ref{lemma:gradient-energy-estimate} can control the volume of polytopes on the boundary of~$N_{i+1}$ which remain. Note that~$h_0 = h_1 = h_D$. 
	\smallskip

	\emph{Step 2.} We bound the difference between~$h_C$ and~$h_D$ using the sequence defined in Step 1.
	By definition of~$S_{i}$, we have that 
	\[
	|h_i - h_C| \leq (A M+2) \eps \quad \mbox{on} \ \V[U] \setminus S_{i+1}  \qquad \forall i \geq 0 \, . 
	\]
	Also, since the sets are necessarily nested,~$S_{i+1} \subset S_i$, the maximum principle yields that for every~$i \geq 0$,
\[
	\max_{\V[U]} |h_{i+1} - h_{i} | = \max_{S_{i}} |h_{i+1} - h_{i} | = 
	 \max_{\partial S_{i}} |h_{i+1} - h_{i} | = \max_{\partial S_{i} } |h_C - h_i| \leq 
	 (A M+2) \eps \, . 
\]
By the previous two displays and telescoping, we have 
	\begin{equation}
		\label{eq:telescoping-bound}
		|h_D - h_C|  \leq  {i (A M+2) \eps} 
		 \quad  \mbox{on}   \ \overline{\V[U]} \setminus S_i  \quad \forall i \geq 0 \, . 
	\end{equation}
	 By the mean value theorem, 
	 for each component~$Q$ of~$S_i$, 
	\[
	\sup_{x, y \in \overline{Q}} |h_C(x) - h_C(y)| \leq {\sum_{v \in Q} \diam(P_v)  L}
	\]
	and, for each vertex~$v \in Q$, by the maximum principle~$h_D(v) \leq h_D(w)$ for some vertex~$w \in \partial Q$. Consequently, by the above two displays and the triangle inequality, for each such component~$Q$ and vertex~$v \in Q$, there is a vertex~$w \in \partial Q$ such that
	\begin{align*}
		h_D(v) - h_C(v) &\leq h_D(w) - h_C(v)   \\
		&\leq h_D(w) - h_C(w) +  {\sum_{v \in Q} \diam(P_v)  L} \\
		&\leq i (A M+2) \eps  +  {\sum_{v \in Q} \diam(P_v)  L} \, . 
	\end{align*}
	Repeating this for the other direction shows that
	\begin{equation}
		\label{eq:bound-which-includes-diameter}
		\sup_{v \in \V[U]}|h_D(v) - h_C(v)|  \leq  {i (A M+2) \eps}  + \left( \sup_{\mbox{components $Q \subset S_i$}} {\sum_{v \in Q} \diam(P_v)}  \right) L  \quad \forall i \geq 0 \, . 
	\end{equation}

	\smallskip

	\emph{Step 3.} 	We show  that the term in the parentheses on the right of~\eqref{eq:bound-which-includes-diameter} is bounded above by~$D$ for a sufficiently large choice of~$i$.  For each~$i \geq 0$, let  
	\[
	U_i := \bigcup_{v \in S_i} P_v \quad \mbox{and} \quad Y_i := \{y \in \projone(U_i): \max_{v \in \Lyone[U_i]} |(h_i - h_C)(v)| >  	{A M \eps} \} \, , 
	\]   	
	where~$\projone$ and~$\Lyone$ are as in~\eqref{eq:line-project}.
	By Proposition~\ref{prop:energy-poincare-bound} with parameter choice of~$k = (3A^{-1} \sum_{e \in \E[\overline{U_i}]} d   \vol_{d}(\Qe))^{-1}$ and~$U$ replaced by~$U_i$ we have, for every~$i \geq 0$,
	\begin{equation*}
	 	\vol_{d-1} \left( Y_i  \right) \leq  {3 A^{-1} \sum_{e \in {\E[\overline{U_i}]}} d   \vol_{d}(\Qe)} \, .
	\end{equation*}
	Using the definitions of $U_i$ and $S_i$, we may bound the sum on the right as  
	\begin{equation}
	\label{eq:bounding-polytope-volume-iteration}
	\sum_{e \in {\E[\overline{U}_i]}} d   \vol_{d}(\Qe) \leq \underbrace{\sum_{e \in \E[N_i]} d \vol_{d}(\Qe)}_{\leq d \vol_{d-1}(Y_{i-1}) \diam(U)} + 
	\underbrace{\sum_{e \in \partial \E[N_i]} d \vol_{d}(\Qe) 1\{ |\nabla^{\mathcal{G}} (h_{i-1} - h_C)(e)| > 2 \eps \}}_{\leq \D(h_{i-1} - h_C, U_{i-1})} \, . 
	\end{equation}
	The bound for the first term follows since, for each edge~$(w,v) = e \in \E[N_i]$,  as~$v,w \in N_i$, the projection of~$P_w \cup P_v$ is contained in~$Y_{i-1}$ and hence the dual polytope~$\Qe$ is completely contained in a cylinder 
	with base~$Y_{i-1}$ and height~$\diam(U)$. The second term is bounded by  Lemma~\ref{lemma:gradient-energy-estimate} applied with~$f := h_{i-1} - h_C$
	and~$A = U_{i-1}$. 
	
	Combining the above two displays with Proposition~\ref{prop:energy-bound} to bound~$\D(h_{i} - h_C, U_{i})$ yields, for every~$i \geq 0$

	\begin{equation*}
	\label{eq:vol-bound-by-qe}
	\begin{aligned}
	\vol_{d-1} \left( Y_{i+1}  \right)  &\leq {3 A^{-1}} \sum_{e \in {\E[\overline{U_{i+1}}]}} d   \vol_{d}(\Qe) \\
	&\leq {3 A^{-1}} d \diam(U) \vol_{d-1} \left( Y_{i}  \right)
	+ {27 A^{-1} M^2 \eps^{2}} \sum_{e \in \E[\overline{U_{i}}]} d   \vol_{d}(\Qe)  \, , 
	\end{aligned}
\end{equation*}
	and so, if we define 
	\[
	f(i) :=  \vol_{d-1}(Y_i) \vee  {3 A^{-1}} \sum_{e \in \E[\overline{U_i}]} d   \vol_{d}(\Qe)
	\]
	we have 
	\[
	f(i+1) \leq {3 A^{-1}} d \diam(U) f(i) + 9 M^2 \eps^2 f(i) \, . 
	\]
	Assuming~$\eps < 2^{-10} A^{-1} M^{-1}$, the above display implies that there is a constant~$C=C(U) < \infty$ such that
	\[
	f(i+1) \leq C {A^{-1}} f(i) \qquad \forall i \geq 1 \, 
	\]
	and so, using~$f(0) \leq C$, and picking~$A$ sufficiently large, depending on~$U$, so that~$C A^{-1} < \frac14$, we have
	\begin{equation}
		\label{eq:geometric-bound-during-iteration}
	{f(i+1) \leq 2^{-i} \qquad \forall i \geq 1} \, . 
	\end{equation}
	Since, for every~$v \in S_i$, the polytope~$P_v \subset \bigcup_{e \in \E[U_i]} \Qe$, this shows that 
	\[
	\vol_{d}(P_v) \leq  2^{-i} \qquad \forall v \in S_{i+1} \, ,  \quad \forall i \geq 1 \, . 
	\]
	Consequently, by~\eqref{eq:small-volume-assumption} each component of~$S_{C K \log \eps^{-1}}$ must have the sum of the diameters of its corresponding polytopes bounded by~$D$. This together with~\eqref{eq:bound-which-includes-diameter} completes the proof. 
\end{proof}

We modify the previous proof to replace the assumption~\eqref{eq:small-volume-assumption} by one
where the diameter of every polytope is bounded by some (possibly tiny) power of the volume of its dual polytopes. 
\begin{theorem}
	\label{theorem:generaldim-decay}
	Let~$U$ be a bounded domain with~$\overline{U} \subset \mathcal{D} \subset \R^d$ and suppose~$h_C \in C^{2}(\GCU)$ satisfies~$\CLap h_C = 0$ on~$\GCU$ and let~$h_D: \overline{\V[U]} \to \R$ satisfy 
	\[
	\left\{
	\begin{aligned}
		&  \aDelta h_D = 0 & \mbox{in} & \ \V[U] \,,
		\\
		& h_D = h_C & \mbox{on} & \  \partial \V[U] \, . 
	\end{aligned}
	\right.
	\]
	Then with
	\[
	\eps := \sup_{v \in \overline{\V[U]}} \diam(P_v) \quad \quad M := \sup_{\GCU} |\nabla^2 h_C| \vee 1 \, , 
	\]
	and under the assumption that there exists some~$F, \alpha > 0$ such that
	\begin{equation}
		\label{eq:decay-of-stuff}
		\diam(P_v) \leq  F \sup_{e = (w,v) \in \E} \vol_{d}(\Qe)^{\alpha} \qquad \forall v \in \V[U] \, ,  
	\end{equation}
	we have that there exists~$C=C(U, F, \alpha) < \infty$ such that, assuming~{$\eps  < (C M)^{-1}$},
	\begin{equation} 
		\label{eq:full-bound-decay-of-stuff}
		\sup_{v \in \V[U]} | h_D(v) - h_C(v) | \leq {C M \eps (\log \eps^{-1})}  \, . 
	\end{equation}
\end{theorem}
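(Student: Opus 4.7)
\medskip

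\noindent\textbf{Proof plan for Theorem~\ref{theorem:generaldim-decay}.}

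The plan is to rerun verbatim Steps~1--3 of the iteration in the proof of Theorem~\ref{theorem:generaldim}, and to replace only the final combinatorial invocation of hypothesis~\eqref{eq:small-volume-assumption} with an argument using the decay hypothesis~\eqref{eq:decay-of-stuff}. Concretely, start from $h_0 := h_D$, define $S_i$ and $h_{i+1} = \mathcal{H}(S_i)$ exactly as in~\eqref{eq:def-of-bad-set}, and choose the large constant $A = A(U)$ so that the recursion in Step~3 gives the geometric bound~\eqref{eq:geometric-bound-during-iteration}, i.e.\ there is a constant $C_0 = C_0(U)$ such that, after any $i \ge 0$ iterations,
\[
\delta_i \ :=\ \sum_{e \in \E[\overline{U_i}]} d\,\vol_d(Q_e)\ \le\ C_0\,2^{-i}.
\]
In particular every edge $e$ incident to a vertex $v \in S_i$ satisfies $\vol_d(Q_e) \le \delta_i$, so hypothesis~\eqref{eq:decay-of-stuff} yields the \emph{uniform} diameter bound $\diam(P_v) \le F\,\delta_i^{\alpha}$ for all such $v$. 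This is the step in which~\eqref{eq:decay-of-stuff} replaces~\eqref{eq:small-volume-assumption}.

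Next I would upgrade this pointwise diameter bound to a bound on each component. The key identity is obtained by raising~\eqref{eq:decay-of-stuff} to the power $1/\alpha$ and summing over $v$ in a component $Q$ of $S_i$:
\[
\sum_{v \in Q}\diam(P_v)^{1/\alpha}\ \le\ F^{1/\alpha}\sum_{v \in Q}\sum_{e \ni v} \vol_d(Q_e)\ \le\ 2 F^{1/\alpha}\,\delta_i ,
\]
since each edge is counted at most twice. To convert this summability estimate into a bound on the spatial diameter $\mathrm{diam}\bigl(\bigcup_{v \in Q}P_v\bigr)$ (which controls $|h_C(v)-h_C(w)|$ along a shortest path of polytopes in the component and then enters Step~2 of Theorem~\ref{theorem:generaldim} via the mean value theorem), I will combine the above with the uniform bound $\diam(P_v) \le F\delta_i^{\alpha}$. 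For $\alpha \ge 1$ the subadditivity of $x \mapsto x^{\alpha}$ gives directly
\[
\sum_{v \in Q}\diam(P_v)\ \le\ \Bigl(\sum_{v \in Q}\diam(P_v)^{1/\alpha}\Bigr)^{\!\alpha}\ \le\ (2F^{1/\alpha})^{\alpha}\,\delta_i^{\alpha},
\]
so it suffices to take $i \gtrsim \alpha^{-1}\log\varepsilon^{-1}$ iterations to force the right-hand side below $\varepsilon$. For $\alpha \in (0,1)$ the same calculation together with H\"older gives $\sum_{v \in Q}\diam(P_v) \le (2F^{1/\alpha})^{\alpha}\,\delta_i^{\alpha}\,|Q|^{1-\alpha}$, and one controls $|Q|$ by noting that, since $\diam(P_v)\le F\delta_i^{\alpha}$ forces $\vol_d(P_v)\le cF^d\delta_i^{\alpha d}$, while the polytopes in $Q$ are disjoint and covered by $\bigcup_{e\in\E[\overline{U_i}]}Q_e$ of total volume $\le\delta_i$, standard covering comparisons bound $|Q|$ by a polynomial in $\delta_i^{-1}$ whose exponent depends only on $d,\alpha,F$. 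This gives $\sum_{v \in Q}\diam(P_v) \le C(F,\alpha,d)\,\delta_i^{\beta}$ for some $\beta = \beta(\alpha,d) > 0$, so again $O_{\alpha,d,F}(\log\varepsilon^{-1})$ iterations suffice.

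Finally, plugging into the bound~\eqref{eq:bound-which-includes-diameter} from Step~2 of Theorem~\ref{theorem:generaldim} with the chosen $i = O_{U,F,\alpha}(\log\varepsilon^{-1})$, one obtains
\[
\sup_{v \in \V[U]}|h_D(v) - h_C(v)|\ \le\ i A M\varepsilon\ +\ L\cdot\!\!\sup_{\text{components }Q \subset S_i}\sum_{v \in Q}\diam(P_v)\ \le\ CM\varepsilon\,(\log\varepsilon^{-1}),
\]
after absorbing $L$ (which depends on $U$ and $M$ through standard interior gradient estimates for $h_C$) into $C$. The step I expect to be the main obstacle is the bound on the spatial diameter of a component $Q$ when $\alpha < 1$: the summability $\sum_v\diam(P_v)^{1/\alpha}\le 2F^{1/\alpha}\delta_i$ is not by itself enough, and the argument must combine it with a covering/volume comparison that uses both the decay hypothesis and the already established volume bound $\vol_d\bigl(\bigcup_{v\in Q}P_v\bigr)\le\delta_i$ in order to close the loop independently of the (a priori uncontrolled) cardinality $|Q|$.
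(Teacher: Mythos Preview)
Your plan has a genuine gap in the case $\alpha < 1$, and the paper's proof takes a different route precisely to avoid it.

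The step that fails is your bound on $|Q|$. You note that $\diam(P_v) \le F\delta_i^{\alpha}$ forces $\vol_d(P_v) \le c F^d \delta_i^{\alpha d}$, and that the polytopes of $Q$ have total volume at most $\delta_i$. But this is an \emph{upper} bound on each $\vol_d(P_v)$, which goes the wrong way: to bound $|Q|$ from above you would need a \emph{lower} bound on the individual volumes, and neither~\eqref{eq:decay-of-stuff} nor the iteration supplies one. Concretely, nothing prevents a component of $S_i$ from consisting of a long chain of many tiny polytopes: if each has diameter $r$ and there are $N$ of them, the constraint $\sum_v \diam(P_v)^{1/\alpha} \lesssim \delta_i$ only says $N r^{1/\alpha} \lesssim \delta_i$, whence $\sum_v \diam(P_v) = Nr \lesssim \delta_i\, r^{1-1/\alpha}$, which blows up as $r\to 0$ when $\alpha<1$. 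So~\eqref{eq:bound-which-includes-diameter} cannot be closed this way. (A secondary issue: even when your component bound works, the residual term $L\cdot\sum_{v\in Q}\diam(P_v)$ cannot be absorbed into $CM\eps$ as you claim, since $L=\sup|\nabla h_C|$ is not controlled by $M=\sup|D^2 h_C|\vee 1$; take $h_C$ linear.)

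The paper's remedy is to modify the iteration itself rather than its termination. Instead of the fixed threshold $AM\eps$ in~\eqref{eq:def-of-bad-set}, one uses a \emph{variable} threshold $AM\delta_i$, where now $\delta_i := \sup_{v \in S'_i}\diam(P_v)$ is the largest polytope diameter remaining in the bad set at step $i$. Step~2 then telescopes to $|h_D - h_C| \le C A M\sum_{j\le i}\delta_j$ on $\V[U]\setminus S'_i$, with no component-diameter term and no $L$. Rerunning Step~3 with $\eps$ replaced by $\delta_j$ at stage $j$ gives the same geometric decay for $\sum_e \vol_d(Q_e)$ over the bad set, and~\eqref{eq:decay-of-stuff} converts this into $\delta_i \le \eps \wedge C\,2^{-\alpha i}$. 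Since $\V[U]$ is finite, $S'_i$ is eventually empty, so the telescoped bound holds everywhere; summing $\sum_j (\eps \wedge C\,2^{-\alpha j}) \le K\eps + C\,2^{-\alpha K}$ and choosing $K \asymp \alpha^{-1}\log\eps^{-1}$ yields~\eqref{eq:full-bound-decay-of-stuff}. This argument is uniform in $\alpha>0$ and sidesteps the cardinality issue entirely.
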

\begin{proof}
	The proof follows the same outline as the previous one but utilizes the observation that the diameter of each polytope  
	in the ``bad set'' should decrease in every step. To avoid repetition, we use the same notation as the previous proof. 
	Objects which are nearly identical to those in the previous proof 
	are denoted with a prime, those which are new have different letters; a central role will be played by the decreasing
	sequence~$\{\delta_i\}_{i \geq 0}$ which will describe the longest edge in the bad set. 
	The constant~$C$ below will be allowed to depend on~$U,F,\alpha$ and may change from line to line. 
	{Fix~$A \geq 1$, which depends only on~$U$, as in~\eqref{eq:def-of-bad-set}. }
	\smallskip

	Start with~$h_0' := h_D$,~$\delta_0 := \eps$,~$S'_0 := \V[U]$ and recursively set, for~$i \geq 0$, 
	\begin{equation}
		\label{eq:def-of-other-bad-set}
		\begin{aligned}
			h'_{i+1} &:=  \mathcal{H}(S'_{i}) \, ,   \\
			N'_{i+1} &:= \{ v \in \V[U] : |(h'_{i} - h_C)(v)| > {A M \delta_i}    \}  \, ,  \\
			S'_{i+1} &:=  N'_{i+1}  \setminus \{ v \in N'_{i+1} : \exists w \in \partial N'_{i+1} \, \, \mbox{with} \, \, |\nabla^{\mathcal{G}}(h'_i - h_C)(v,w)| \leq 2 \delta_i \}    \, , \\
			\delta_{i+1} &:= \sup_{v \in S'_{i+1}} \diam(P_v) \, . 
		\end{aligned}
	\end{equation}
	Following the argument leading to~\eqref{eq:telescoping-bound},  we have
	\begin{equation}
		\label{eq:sum-of-deltaj-bound}
		|h_D - h_C| \leq C {A} M \sum_{j=1}^{i} {\delta_j}  \quad  \mbox{on}   \ \overline{\V[U]} \setminus S'_{i}  \quad \forall i \geq 0 \, . 
	\end{equation}
	Now, instead of using~\eqref{eq:bound-which-includes-diameter} and arguing that the components of~$S'_{i}$ have a small diameter for~$i$ large, we will allow~$i$ to go to infinity in the above display
	and control how quickly~$\delta_j$ goes to zero. 
	
	Since~$\delta_j$ is defined as the smallest diameter of a polytope in~$S'_{j}$, we may replace every instance of~$\eps$ in Step 3 of the previous proof by~$\delta_j$.  Denote by~$U'_j := \bigcup_{v \in S'_{j}} P_v$ so that if we define, in place of~$f(i)$, 
	\[
	f'(i) :=  {C A^{-1}} \sum_{e \in \E[\overline{U'_i}]} d   \vol_{d}(\Qe) \, , 
	\]
	we have,  as in~\eqref{eq:geometric-bound-during-iteration}, 
	\[
	f'(i+1) \leq  {2^{-i}}  \, . 
	\]
	We rewrite the bound on the sum as a bound on the maximum,
	\[
	\left({C A} f'(i) \right)^{\alpha} \geq \left( \sum_{e \in \E[\overline{U'_i}]}  \vol_{d}(\Qe) \right)^{\alpha} \geq \sup_{e \in \E[\overline{U'_i}]} \vol_{d}(\Qe)^{\alpha} \, . 
	\]
	The above two displays together with~\eqref{eq:decay-of-stuff} then imply a bound on~$\delta_i$, 
		\[
		\sup_{v \in S'_{i}} \diam(P_v) \leq C A^{\alpha}  f'(i)^{\alpha}\leq  C 2^{-\alpha i} \, , 
		\]
		where in the last step we absorbed~$A$ into~$C$.

	Since~$\delta_i \leq \eps$ for all~$i$, this shows that 
	\[
	{\delta_i \leq \eps \wedge (C 2^{- \alpha  i}) \quad \forall i \geq 0 \, . }
	\]
	Since the right side goes to zero as~$i \to \infty$, and there are only finitely many polytopes in~$U$, the set~$S'_{i}$ must empty
	for large enough~$i$. Thus, taking $K \in \mathbb N$ and plugging the above display into~\eqref{eq:sum-of-deltaj-bound} and sending~$i \to \infty$ {yields 
\begin{align*}
	|h_D - h_C|  &\leq \inf_{K >0}   \left( C M K \eps + C M \sum_{j=K}^{\infty}    2^{-\alpha j} \right) \\
	&\leq \inf_{K >0}   \left( C M K \eps + C M 2^{-\alpha K} \right) \\
	&\leq C M \eps (\log \eps^{-1})   \quad \mbox{(after choosing~$K = C (\alpha^{-1}+1) \log(\eps^{-1})$)} \, .
\end{align*}}
This completes the proof. 
\end{proof}

\begin{proof}[Proof of Theorem~\ref{theorem:convergence-of-the-dirichlet-problem} under Hypothesis~\ref{item:cont-neighborhood}]
	The assertions corresponding to the hypotheses~\ref{item:planarity} and~\ref{item:diam} follow from Theorems~\ref{theorem:2d} and~\ref{theorem:generaldim-decay} respectively. The assertion corresponding to Hypothesis~\ref{item:vol} follows by applying Theorem~\ref{theorem:generaldim} for each~$n$
	with~$K := 2 \lfloor \frac{\log(1/\min_{v \in \V_n} \vol_{d}(P_v)) }{\log(1/\eps_n)} \rfloor$. Indeed, with this choice, for large enough~$n$ the set~$\mathcal{A}$ defined in~\eqref{eq:small-volume-assumption} is empty but the first term on the right in~\eqref{eq:full-bound} still converges to zero. 
\end{proof}

\section{Convergence of random walk modulo time paramaterization} 
\label{sec:rw-convergence}
In this section we prove a ``black box theorem'' which roughly states that if you have convergence of the Dirichlet 
problem, then the trace of random walk converges, under rescaling, to the trace of Brownian motion. Slightly more precisely,  we show that under this condition, the law of the rescaled random walk converges to the law of Brownian motion with respect to the local topology on curves viewed modulo time parameterization. 

Specifically, we fix a sequence of undirected graphs~$\{\G_n = (\V_n, \E_n)\}_{n\geq0}$
with each~$\V_n \subset \Rd$.  For a point~$z \in \Rd$, let~$z^{n}$ be the nearest vertex in~$\G_n$, with ties broken in lexicographical ordering.  
We also fix a nearest neighbor discrete-time random walk~$\{X_t^{n}\}_{t \geq 0}$ on~$\V_n$; that is,~$\{X_t^{n}\}$ is a time homogeneous Markov chain and~$(X_{t}^{n}, X_{t+1}^{n}) \in \E_n$ for all~$t \geq 0$. We let~$\Delta^{\G_n}$ denote the generator of~$X_t^{n}$, that is,
for every vertex~$a \in \V_n$ and function~$f: \V_n \to \R$, 
\[
\Delta^{\G_n} f(a) = \mathbb{E}[f(X^{a, n}_1) - f(a)] \, , 
\]
where the expected value is taken over one step of the random walk with initial point~$a \in \V_n$.

 We define, for a set~$U \subset \Rd$ and~$r > 0$, 
\[
B_{r}(U) := \{ x \in \Rd : \inf_{y \in U}\| x - y\| \leq r\} \, . 
\]
 The following assumption states that the graph approximates some (possibly infinite) domain~$\mathcal{D}$ and the Dirichlet problem for every ball~$B_r(x)$ intersecting a neighborhood of~$\mathcal{D}$ converges. 
 
\begin{assumptionD}
	\label{ass:D}
	{There exists a domain~$\mathcal{D}$ and some~$\delta > 0$ such that for every~$r > 0$ and~$x \in B_{\delta}(\mathcal{D})$ the following holds for~$B := B_r(x) \cap B_{\delta}(\mathcal{D})$.} The graph approximates Euclidean space in the ball, 
	\begin{equation}
		\label{eq:local-approximation-of-space}
	\lim_{n \to \infty} \sup_{z \in B} \| z^{n} - z\| = 0 \, . 
	\end{equation}
	Moreover, for every function~$h_C$ which is continuum harmonic in a neighborhood of~$B$, the discrete
	harmonic extension~$h^{n}_D$ of~$h_C$ to~$\V_n[B]$, defined by, 
	\[
	\left\{
	\begin{aligned}
		&  \Delta^{\G_n} h^{n}_D = 0 & \mbox{in} & \ \V_n[B] \,,
		\\
		& h^{n}_D = h_C & \mbox{on} &  \ \partial \V_n[B]    \, ,
	\end{aligned}
	\right.
	\]
	converges uniformly to~$h_C$,
	\begin{equation}
		\label{eq:convergence-of-d-problem}
	\lim_{n \to \infty} \sup_{z \in \V_n[B]} |h^{n}_D(z) - h_C(z)|  = 0 \, . 
	\end{equation}
\end{assumptionD}


Under this assumption, we prove a central limit theorem.

\begin{theorem}
	\label{theorem:general-clt}
	Suppose~$\mathcal{D}$ is such that Assumption~\ref{ass:D} holds and let~$U \subset \mathcal{D}$ be a {Lipschitz}, bounded domain. Let~$z \in U$, and let~$X^{z, n}$ be random walk on~$\G_n$ 
	started at~$z^{n}$, let~$J^{z, n}_U$ denote the first exit from~$\V_n[U]$,
	and let~$Y^{z, n}$ denote the piecewise linear interpolation of~$X^{z, n}$.
	Let~$\mathcal{B}^z$ be a standard Brownian motion started at~$z \in U$ with first exit~$\tau^{z}_U$ from~$U$. 
	The supremum over all~$z \in U$ of the Prokhorov distance between the laws of~$Y^{z, n} \vert_{[0,J^{z, n}_U]}$ and~$\mathcal{B}^{z} \vert_{[0, \tau^{z}_U]}$ with respect to the metric~\eqref{eq:cmp-topology} converges to 0 as~$n \to \infty$. 
\end{theorem}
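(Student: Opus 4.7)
The plan is to prove Theorem~\ref{theorem:general-clt} by constructing an iterative coupling of the random walk and Brownian motion through exit distributions from small balls, then deducing convergence in the metric~\eqref{eq:cmp-topology} from the fact that both processes remain confined to a common small region between successive coupling checkpoints.

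First, I would translate Assumption~\ref{ass:D} into a statement about exit measures. For $w \in \overline{U}$ and $r > 0$ with $B_{2r}(w) \subset B_{\delta}(\mathcal{D})$, set $B := B_r(w) \cap B_{\delta}(\mathcal{D})$ and let $\tau^n$, $\tau$ denote the first exit times from $\V_n[B]$ for $X^{v,n}$ and from $B$ for $\mathcal{B}^x$, respectively. For any continuous test function $g$ defined in a small neighborhood of $\partial B$ and harmonic there (say, the restriction of a function harmonic in a larger neighborhood, which is enough to separate points by a density argument), both $v \mapsto \mathbb{E}[g(X^{v,n}_{\tau^n})]$ and $x \mapsto \mathbb{E}[g(\mathcal{B}^x_{\tau})]$ solve their respective Dirichlet problems, so~\eqref{eq:convergence-of-d-problem} gives $\mathbb{E}[g(X^{w^n,n}_{\tau^n})] \to \mathbb{E}[g(\mathcal{B}^w_{\tau})]$. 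Standard weak-convergence arguments then upgrade this to convergence of the exit laws themselves, uniformly in $w$ on compact subsets of $\mathcal{D}$.

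Second, I would construct an iterative coupling. Fix a slowly-decaying sequence $r_n \to 0$ with $r_n$ much larger than $\sup_{z \in \overline{U}} \|z^n - z\|$. Starting from $W_0 := z$ and $V_0 := z^n$, I inductively produce matched positions $(W_k, V_k)$ with $\|W_k - V_k\|$ much smaller than $r_n$ by Skorokhod-coupling the exit distributions from $B_{r_n}(W_k) \cap B_\delta(\mathcal{D})$ for Brownian motion and from the discrete region $\V_n[B_{r_n}(V_k) \cap B_\delta(\mathcal{D})]$ for the walk, so that after each step the new positions $(W_{k+1}, V_{k+1})$ again satisfy $\|W_{k+1} - V_{k+1}\| = o(r_n)$ with high probability. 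The iteration stops when either process has left $U$; since Brownian motion hits the boundary of $U$ in finitely many such macro-steps (a standard hitting-time estimate bounds the expected number of iterations by $O(r_n^{-2})$), a union bound controls the accumulated coupling error.

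Third, I would translate the coupling into closeness in the metric~\eqref{eq:cmp-topology}. I reparametrize the linearly interpolated path $Y^{z,n}$ so that its $k$-th checkpoint time coincides with the Brownian motion's $k$-th exit time. Since each process is confined to a ball of radius $r_n$ about its current checkpoint between consecutive checkpoints, the two reparametrized trajectories agree to within $O(r_n)$ uniformly in time, which implies the Prokhorov distance tends to $0$ uniformly in $z$. The main obstacle is handling steps near $\partial U$, where the balls $B_{r_n}(V_k)$ may extend outside $U$ and the two coupled processes may exit $U$ at different checkpoints. Here the Lipschitz hypothesis on $U$ (as flagged in the footnote after Theorem~\ref{theorem:convergence-of-rw-trace}) is essential: it guarantees that once one of the processes has exited $U$, the coupled partner cannot remain inside $U$ for much longer than a small additional time without itself exiting near the same point, so the terminal segment contributing to the supremum in~\eqref{eq:cmp-topology} has vanishing length.
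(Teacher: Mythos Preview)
Your proposal is correct and follows essentially the same three-step architecture as the paper: (i) use Assumption~\ref{ass:D} to deduce convergence of exit (harmonic) measures from small balls, (ii) iterate via the strong Markov property to couple the walk and Brownian motion at a sequence of ball-exit checkpoints, and (iii) reparametrize so checkpoint times match and use confinement in each ball plus the Lipschitz boundary estimate to bound the curve distance.

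The one organizational difference worth noting is that the paper decouples the two limits: it first fixes a mesoscopic scale~$\delta$, covers~$B_{2\delta}(U)$ by finitely many balls of radius~$\delta$ centered at grid points of~$\delta\mathbb{Z}^d$, and lets~$n\to\infty$ so that the finite sequence of exit points~$(Y^{z,n}_{J^{z,n}_{\delta,k}})_{k\le K^z_\delta}$ converges in law to~$(\mathcal{B}^z_{\tau^z_{\delta,k}})_{k\le K^z_\delta}$; only afterward is~$\delta$ sent to~$0$. This sidesteps the union bound you invoke over~$O(r_n^{-2})$ steps with a per-step error that is only known to vanish qualitatively (Assumption~\ref{ass:D} gives no rate), so your version implicitly requires a diagonal choice of~$r_n$ slow enough that~$r_n^{-2}$ times the worst-case Prokhorov error over all radius-$r_n$ balls still tends to~$0$. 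The paper's fixed-$\delta$ grid also means both processes exit from balls with the \emph{same} center, avoiding the extra continuity argument needed when your balls are centered at the slightly different points~$W_k$ and~$V_k$. None of this is a genuine gap, but the paper's ordering of limits is cleaner.
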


\begin{proof}[Proof of Theorem~\ref{theorem:convergence-of-rw-trace} assuming Theorem~\ref{theorem:general-clt}]
	It follows from Theorem~\ref{theorem:convergence-of-the-dirichlet-problem} that Assumption~\ref{ass:D}
	is satisfied. This immediately implies Theorem~\ref{theorem:convergence-of-rw-trace}.
\end{proof}

\begin{proof}[Proof of Theorem~\ref{theorem:convergence-of-the-dirichlet-problem} assuming Hypothesis~\ref{item:hypb}]
This is immediate from Theorem~\ref{theorem:general-clt} and the fact that the solution of the Dirichlet problem can be expressed in terms of the random walk. 
\end{proof}

It remains to prove Theorem~\ref{theorem:general-clt}. We first show that Assumption~\ref{ass:D} implies convergence of harmonic measure. We then use the strong Markov property  of random walk together with an approximation argument to deduce Theorem~\ref{theorem:general-clt}.

\subsection{Convergence of harmonic measure}
 Let~$\B$ denote the unit ball in~$\Rd$.
Let~$\hm(x)$ be the harmonic measure on~$\partial \B$, as seen from~$x \in \B$. For~$x \in \B$ and~$z \in \partial \V_n[\B]$, we define~$\hmn(x)[z]$ to be the probability measure which assigns 
mass to each vertex of $\partial\V_n[\B]$ equal to its discrete harmonic measure from $x^n$. We also write~$\dPr$ for the Prokohorov distance between two probability measures.

\begin{prop} \label{prop:convergence-of-harmonic-measure}
	Under Assumption~\ref{ass:D}  we have that 
	\[
	\lim_{n \to \infty} \sup_{x \in \B} \dPr( \hmn(x), \hm(x)) = 0 \, . 
	\]
\end{prop}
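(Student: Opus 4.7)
The plan is to reduce uniform Prokhorov convergence of the discrete harmonic measures to uniform convergence of their integrals against a dense family of harmonic test functions, and then apply Assumption~\ref{ass:D} directly.

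First I handle the convergence of integrals against a fixed harmonic polynomial $p$ on $\Rd$. The continuum mean-value property gives $\int p\,d\hm(x) = p(x)$. On the discrete side, by the optional stopping theorem applied to the bounded martingale $p(X^{x,n}_t)$ stopped at the first exit $J^{x,n}_{\B}$ from $\V_n[\B]$,
\[
\int p\,d\hmn(x) \;=\; \mathbb{E}^{x^n}\!\bigl[p(X^{x,n}_{J^{x,n}_\B})\bigr] \;=\; h^{n,p}_D(x^n),
\]
where $h^{n,p}_D$ is the discrete harmonic extension to $\V_n[\B]$ with boundary values $p$. Since $p$ is harmonic on all of $\Rd$, Assumption~\ref{ass:D} applied with $h_C=p$ yields $\sup_{z \in \V_n[\B]} |h^{n,p}_D(z) - p(z)| \to 0$. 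Combined with~\eqref{eq:local-approximation-of-space} and the continuity of $p$, this gives
\[
\sup_{x \in \B} \Bigl| \int p\,d\hmn(x) - \int p\,d\hm(x) \Bigr| \;\longrightarrow\; 0.
\]

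Next I pass to arbitrary bounded continuous test functions. Restrictions to $\partial \B$ of harmonic polynomials on $\Rd$ are dense in $C(\partial \B)$, by the Weierstrass approximation theorem together with the standard decomposition of any polynomial into a sum of solid spherical harmonics times powers of $|x|^2$, which reduce on the unit sphere to sums of harmonic polynomials. Given $f \in C_b(\Rd)$ and $\eta>0$, I pick a harmonic polynomial $p$ with $\|f-p\|_{C(\partial \B)} < \eta$. Since $f - p$ is uniformly continuous, and~\eqref{eq:local-approximation-of-space} forces $\partial \V_n[\B]$ into an arbitrarily small neighborhood of $\partial \B$ for large $n$, one also has $|f(v) - p(v)| < 2\eta$ for every $v \in \partial \V_n[\B]$ once $n$ is large. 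Combining this with the previous display yields
\[
\sup_{x \in \B} \Bigl| \int f\,d\hmn(x) - \int f\,d\hm(x) \Bigr| \;\longrightarrow\; 0
\]
for every $f \in C_b(\Rd)$.

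Finally, I upgrade this pointwise convergence of integrals to uniform Prokhorov convergence by sequential compactness. If the conclusion failed there would exist $\epsilon_0 > 0$, indices $n_k \to \infty$, and points $x_k \in \B$ with $\dPr(\hmn(x_k), \hm(x_k)) \geq \epsilon_0$ along the subsequence $n_k$. Passing to a further subsequence I may assume $x_k \to x_\star \in \overline{\B}$, and then the continuity of the map $x \mapsto \hm(x)$ on $\overline{\B}$ (with the convention $\hm(x_\star) := \delta_{x_\star}$ for $x_\star \in \partial \B$, which follows from concentration of the Poisson kernel) gives $\hm(x_k) \to \hm(x_\star)$ in $\dPr$. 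The argument of the first two paragraphs applied along $(n_k, x_k)$ gives $\int f\,d\hmn(x_k) \to \int f\,d\hm(x_\star)$ for every $f \in C_b(\Rd)$, so $\hmn(x_k) \to \hm(x_\star)$ weakly; on probability measures supported in a fixed compact set this coincides with $\dPr$-convergence, contradicting the choice of sequence. The main technical point is this last uniformity step, since Assumption~\ref{ass:D} only delivers pointwise-in-$x$ convergence of discrete harmonic functions and we must leverage continuity of the continuum Poisson kernel to promote the result to a uniform statement.
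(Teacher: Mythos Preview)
Your argument is correct and follows the same overall strategy as the paper---reduce to uniform convergence of integrals against a dense class of test functions, then conclude Prokhorov convergence---but the implementation differs in a useful way. The paper takes a smooth $f$ on $\partial\B$, extends it by Whitney to $\Rd$, solves the continuum Dirichlet problem on a slightly larger ball $B_{1+\delta}$ to manufacture a function harmonic in a neighborhood of $\overline\B$, applies Assumption~\ref{ass:D}, and then sends $\delta\to 0$. You instead test directly against harmonic polynomials, which are globally harmonic, so Assumption~\ref{ass:D} applies with no auxiliary $\delta$-limit; density of spherical harmonics in $C(\partial\B)$ then replaces the Whitney/enlarged-ball machinery. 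This is a cleaner route. You also make explicit, via the subsequence argument, the passage from ``$\sup_x|\int f\,d\hmn(x)-\int f\,d\hm(x)|\to 0$ for each $f$'' to ``$\sup_x\dPr(\hmn(x),\hm(x))\to 0$'', a step the paper leaves implicit after writing ``it suffices to show~\eqref{eq:weak-convergence-of-smooth}''.

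One small correction: in your first paragraph you invoke optional stopping for ``the bounded martingale $p(X^{x,n}_t)$''. In the generality of Section~\ref{sec:rw-convergence} the walk is an arbitrary nearest-neighbor chain with generator $\Delta^{\G_n}$, and there is no reason $p$ is $\Delta^{\G_n}$-harmonic. The identity $\int p\,d\hmn(x)=h^{n,p}_D(x^n)$ is nonetheless correct---it is just the probabilistic representation of the discrete Dirichlet solution, i.e.\ optional stopping applied to the genuine martingale $h^{n,p}_D(X^{x,n}_t)$, not to $p(X^{x,n}_t)$. Your final sentence is also slightly off: Assumption~\ref{ass:D} already gives convergence uniform in $x$ for each fixed $h_C$; what your compactness step supplies is uniformity over the test function, which is what Prokhorov requires.
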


\begin{proof}
	Let~$f$ be a smooth, bounded function on~$\partial \B$.	By Whitney extension, we may assume that~$f$ is smooth on~$\Rd$.
	It suffices to show that
	\begin{equation}
		\label{eq:weak-convergence-of-smooth}
	\lim_{n \to \infty}  \sup_{x \in \B} \left| \int_{\Rd} f(z) d \hmn(x)[z] - \int_{\partial \B} f(z) d \hm(x)[z] \right| = 0  \, .   
	\end{equation}
 	Fix~$\delta > 0$ and let 
	~$h_C^{\delta}$ denote the unique function which is harmonic on~$B_{1 + \delta}$ with~$h^{\delta}_C = f$ on~$\partial B_{1 + \delta}$. 
	Let~$h^{n,\delta}_D$ denote the unique solution of 	
	\[
	\left\{
	\begin{aligned}
		&  \Delta^{\G_n} h^{n,\delta}_D = 0 & \mbox{in} & \ \V_n[\B] \,,
		\\
		& h^{n,\delta}_D = h^{\delta}_C & \mbox{on} &  \ \partial \V_n[\B]    \, .
	\end{aligned}
	\right.
	\]
	By~\eqref{eq:convergence-of-d-problem}, for each~$\delta > 0$, 
	\[
	\lim_{n \to \infty } \sup_{x \in \B}|h^{n, \delta}_D(x^{n}) - h^{\delta}_C(x)|  = 0 \, . 
	\]
	Also, by smoothness of~$f$ and the continuous dependence of harmonic functions on their boundary data,
	 the maximum principle, and~\eqref{eq:local-approximation-of-space}, 
	\[
	\lim_{n \to \infty} \lim_{\delta \to 0}  \left(  \left| h^{n, \delta}_D(x^{n}) - \int_{\Rd} f(z) d \hmn(x)[z] \right| + \left| \int_{\partial \B} f(z) d \hm(x)[z] - h_C^{\delta}(x) \right| \right) = 0 \, . 
	\]
	Combining the previous two displays with the triangle inequality verifies~\eqref{eq:weak-convergence-of-smooth}, completing the proof. 
\end{proof}

Since Assumption~\ref{ass:D} is preserved under scaling and translation this also shows convergence of the harmonic measure of rescaled and translated balls.

\subsection{Convergence of random walk}
The argument given here is similar to the proofs of~\cite[Lemma 3.14 and Theorem 3.10]{GMSInvariance}.

\begin{proof}[Proof of Theorem~\ref{theorem:general-clt}]
	The idea is to apply rescaled and translated versions of Proposition~\ref{prop:convergence-of-harmonic-measure} to a sequence of small balls following the path of the random walk.

	Fix~$z \in U$, a small parameter~$\delta > 0$, and cover the domain~$B_{2\delta}(U)$ by balls of radius~$\delta$ centered at \emph{grid points} 
	defined as elements of the set $\delta \Zd \cap B_{2 \delta}(U)$.  
	
	We iteratively define, for~$k \in \N$, a sequence of stopping times~$J^{z, n}_{k}$ and grid points~$w_{\delta, k}^{z, n}$.  Start with~$J^{z, n}_0 = 0$ and choose~$w_{\delta,0}^{z, n}$ so that the point~$z^{n} \in \V_n[B_{\delta}(w_{\delta,0}^{z, n})]$. Then, having defined~$w_{\delta, k-1}^{z, n}$ and~$J^{z, n}_{k-1}$,
	if~$w_{\delta, k-1}^{z, n} \not \in B_{\delta}(U)$, then stop, setting~$K_{\delta}^{z, n} := k-1$. Otherwise, let~$w_{\delta, k}^{z, n}$ be the nearest (ties broken lexicographically)	grid point to~$Y^{z, n}_{J^{z, n}_{k-1}}$. Let~$J^{z, n}_{k}$ be 
	the smallest~$t \geq J^{z, n}_{k-1}$ for which~$X_t^{z, n} \in \partial \V_n[B_{2 \delta}(w_{\delta,k}^{z, n})]$. 
	
	The corresponding Brownian motion stopping times are defined as follows. Initialize~$\tau^z_{\delta, 0} = 0$ 
	and the grid point~$w_{\delta, 0}^{z} = w_{\delta, 0}^{z, n}$. Then, having defined~$w_{\delta, k-1}^{z}$ and~$\tau^{z}_{k-1}$,
	if~$w_{\delta, k-1}^z \not \in B_{\delta}(U)$, then stop, setting~$K_{\delta}^z := k-1$. Otherwise, let~$w_{\delta, k}^z$ be the nearest (ties broken lexicographically)	grid point to~$\mathcal{B}^{z}_{\tau^{z}_{k-1}}$. Let~$\tau^{z}_{k}$ be 
	the smallest~$t \geq \tau^{z}_{k-1}$ for which~$\mathcal{B}_t^{z} \in \partial B_{2 \delta}(w_{\delta, k}^z)$.

	Let~$P_{\delta, k}^{z, n}$ denote the conditional law of~$Y^{z, n}_{J_{\delta,k}^{z, n}}$ given~$w_{\delta,k}^{z,n}$
	and let~$\hm_{\delta, k}^{z}$ be the (continuum) harmonic measure 
	on~$\partial B_{2 \delta}(w_{\delta, k}^{z, n})$ as viewed 
	from~$Y^{z, n}_{J_{\delta, k-1}^{z, n}}$.	By (rescaled and translated versions of) Proposition~\ref{prop:convergence-of-harmonic-measure} and the strong Markov property of random walk, 
	\[
	\lim_{n \to \infty} \sup_{z \in U} \sup_{k \in [0, K_{\delta}^{z, n}] \cap \N} \dPr \left( P_{\delta, k}^{z, n}, \hm_{\delta, k}^{z} \right) = 0 \, . 
	\]
	From this, continuity of harmonic measure, and the fact that~$K_{\delta}^z$ and~$K_{\delta}^{z, n}$ are almost surely finite, we find 
	that the supremum over all~$z \in U$ of the Prokhorov distance between the laws of~$\left( Y^{z, n}_{J_{\delta, k}^{z, n}} \right)_{k \in [0, K_{\delta}^{z, n}] \cap \N}$ and~$\left( \mathcal{B}^z_{\tau_{\delta, k}^{z}} \right)_{k \in [0, K_{\delta}^z] \cap \N}$ tends to 0 as~$n \to \infty$. 
	
	Also, by construction and~\eqref{eq:local-approximation-of-space} we have that 
	\begin{equation}
		\label{eq:continuity-of-approximation}
		\sup_{s, t \in [\tau^z_{\delta, k-1}, \tau^z_{\delta, k}]} \|\mathcal{B}_s^z - \mathcal{B}_t^z \| \leq 4 \delta
		\quad \mbox{and} \quad 
		\lim_{n \to \infty} \sup_{s, t \in [J_{\delta, k-1}, J_{\delta, k}]} \| Y_s^{z, n} - Y_t^{z, n} \ | \leq 4 \delta  \, . 
	\end{equation}
	From this and the preceding paragraph, it follows that we can couple the laws of~$Y^{z, n}$ and~$\mathcal{B}^z$
	in such a way that with probability at least~$1-\delta$, we have~$K_{\delta}^{z, n} = K_{\delta}^z$ and 
	\[
	\sup_{k \in [0, K_{\delta}^z] \cap \N} \left| Y^{z, n}_{J_{\delta, k}^{z, n}} - \mathcal{B}_{\tau_{\delta, k}^z}^z \right| \leq \delta \,. 
	\]
	If this is the case, we re-parameterize~$Y^{z, n} \vert_{[0, J_U^{z, n}]}$ in such a way that~$[J_{\delta, k-1}^{z, n}, J_{\delta, k}^{z, n}]$ is traced in~$\tau_{\delta, k}^z - \tau_{\delta, k-1}^z$ units of time.  {Since~$U$ is Lipschitz, a standard Brownian motion estimate
	 shows that~$|\tau_{\delta,K_{\delta}^z} - \tau_z^U| \leq C \delta$ for some constant~$C(U)$ with probability going to one as~$\delta \to 0$.}
	Consequently, with probability going to one as~$\delta \to 0$, by~\eqref{eq:continuity-of-approximation}, the uniform distance between this re-parameterized version of the curve~$Y^{z, n} \vert_{[0,J^{z, n}_U]}$ and~$B^{z} \vert_{[0, \tau^{z}_U]}$ is at most~$C \delta$. Since~$\delta$ and~$z$ were arbitrary, this completes  the proof. 
\end{proof}

\section{Verifying the hypothesis} \label{sec:verifying-assumptions}
In this section we show that Voronoi tesselations of (possibly very degenerate) point processes satisfy Hypothesis~\ref{item:vol}.
That is, we prove Proposition~\ref{prop:min-max-edge-length}.

First, we show a deterministic property of Voronoi tesselations: if the points are sufficiently spread out, 
then the minimal cell size cannot be large, and if the points are not concentrated, then the maximal cell size cannot be small. In the statement and below, we denote by~$[-r,r]^d$ the~$d$-dimensional cube of radius~$r>0$ centered at the origin. 
\begin{lemma} \label{lemma:geometric-constraint}
	Let~$U\subset \Rd$ be a bounded domain and let~$S \subset \Rd$ be a locally finite set of points
	\begin{enumerate}[label=(\alph*)]
		\item If there exists~$r > 0$ with at most one point in~$x + [-2r,2r]^d$ for all~$x \in r \Zd \cap U$, then
		\begin{equation*}
			\label{eq:lower-bound-on-diameter}
		B_{r}(s) \subset \mathcal{C}_s \quad \forall s \in S \cap U \, . 
		\end{equation*}
	
		\item There exists a constant~$\delta = \delta(d) \in (0,1)$ such that if there exists~$k > 0$ with at least one point in~$y + [-\delta k,\delta k]^d$ for all~$y \in k \Zd \cap B_{2 k}(U)$, then
		\begin{equation*}
			\label{eq:upper-bound-on-diameter}
			 \mathcal{C}_s \subset B_{4 k \sqrt{d}}(s) \quad \forall s \in S \cap U \, . 
		\end{equation*}
	\end{enumerate}
\end{lemma}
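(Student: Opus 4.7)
Both parts follow from direct pigeonhole arguments exploiting the $\ell^\infty$ structure of the integer grid, together with the $\ell^\infty$-to-$\ell^2$ conversion encoded in the $\sqrt{d}$ factor.

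For part (a), I will prove the contrapositive. Suppose $y \in B_r(s)$ does not lie in $\mathcal{C}_s$; then some $s' \in S \setminus \{s\}$ satisfies $\|y - s'\| \leq \|y - s\| \leq r$, so both $s$ and $s'$ lie within Euclidean distance $r$---hence $\ell^\infty$-distance $r$---of $y$. Taking $x \in r\Zd$ to be the grid point nearest $y$ in $\ell^\infty$ gives $\|y - x\|_\infty \leq r/2$, so the triangle inequality places both $s$ and $s'$ inside the cube $x + [-2r, 2r]^d$. Since $s \in U$ and $x$ is within $r/2$ of $s$, $x$ can be taken in $r\Zd \cap U$, contradicting the single-point assumption.

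For part (b), I will prove the contrapositive: for $y \in \Rd$ with $\|y - s\| > 4k\sqrt{d}$, I will exhibit $s^* \in S$ with $\|y - s^*\| < \|y - s\|$, which rules out $y \in \mathcal{C}_s$. The plan is to select a point $p$ on the segment from $s$ to $y$ at bounded distance from $s$ (hence in $B_{2k}(U)$, since $s \in U$), take $y' \in k\Zd$ to be the grid point nearest $p$ in $\ell^\infty$, and invoke the density hypothesis to obtain $s^* \in y' + [-\delta k, \delta k]^d \subset S$. Since $s^*$ then lies within Euclidean distance $(1/2 + \delta)k\sqrt{d}$ of $p$, while $p$ is chosen well inside the segment $[s, y]$, a triangle inequality gives $\|y - s^*\| \leq \|y - p\| + (1/2 + \delta)k\sqrt{d} < \|y - s\|$ provided $\delta$ is chosen sufficiently small depending only on the dimension $d$.

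The principal technical obstacle will be balancing the two competing constraints on the intermediate point $p$: it must be close enough to $s$ that the nearest grid point $y'$ still lies in $B_{2k}(U)$ (so the density hypothesis applies), yet far enough towards $y$ that the perturbation from $p$ to $s^*$ cannot erase the gain $\|y - p\| < \|y - s\|$. The constants $4k\sqrt{d}$ and $\delta(d)$ in the conclusion are precisely the slack needed to absorb the two $\ell^\infty$-to-$\ell^2$ conversions (one from $p$ to $y'$ and one from $y'$ to $s^*$) at the chosen intermediate scale.
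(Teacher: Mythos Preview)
For part (a) your contrapositive is the same pigeonhole as the paper's: trap $s$ and a competing $s'$ in a single grid cube. (Both versions quietly assume the relevant grid point actually lands in $U$; this is the same harmless imprecision present in the paper.)

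For part (b) your route is genuinely different. The paper does not walk along the segment toward $y$ at all; instead it fixes once and for all $2d$ points $s^{(i^\pm)}\in S$ near $s\pm k e_i$, rewrites the corresponding bisecting half-spaces in the form $\{z:z\cdot x^{(i^\pm)}\le 1\}$, and then invokes a separate convex-duality lemma (if a small ball lies in $\conv(x^{(1^\pm)},\dots,x^{(d^\pm)})$ then the dual polytope lies in a large ball) to bound $\mathcal{C}_s$. Your single-competitor argument is more elementary and avoids that auxiliary lemma entirely, which is attractive.

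The difficulty is that the balance you assert does not close for $d\ge 4$. With $p=s+tu$ on the segment and $y'$ the $\ell^\infty$-nearest grid point to $p$, your two constraints read
\[
t>k\sqrt d\,(\tfrac12+\delta)\quad\text{(to absorb }\|p-s^*\|\le(\tfrac12+\delta)k\sqrt d\,),\qquad
t+\tfrac{k\sqrt d}{2}\le 2k\quad\text{(to force }y'\in B_{2k}(U)\,),
\]
and these are incompatible once $\sqrt d(1+\delta)\ge 2$. The radius $4k\sqrt d$ in the conclusion never enters this computation, because the binding constraint is the $2k$ in the \emph{hypothesis}, not the conclusion. The paper sidesteps this by choosing its auxiliary grid points along coordinate axes through $s$, so they sit at Euclidean distance $\approx k$ (not $\approx k\sqrt d$) from $s$. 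To salvage your approach you should either replace the crude triangle inequality by the expansion $\|y-s^*\|^2=\|y-s\|^2-2(y-s)\cdot(s^*-s)+\|s^*-s\|^2$, so that the requirement becomes $(s^*-s)\cdot u>\|s^*-s\|^2/(2\|y-s\|)$ and the hypothesis $\|y-s\|>4k\sqrt d$ finally does work, or else pick $y'$ directly as an axis-neighbor of the grid point nearest $s$ rather than via an intermediate $p$.
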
 
To prove Lemma~\ref{lemma:geometric-constraint}, we use a particular case of the following fact: if a convex polytope is large, then the dual polytope must be small. 
\begin{figure}
	\includegraphics[width=0.5\textwidth]{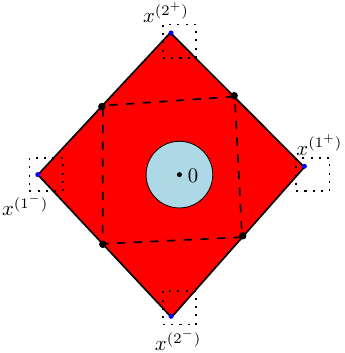}
	\label{fig:nearby-points}
	\caption{Illustration of the proof of Lemma~\ref{lemma:dual-polytope-bound}. The polytope~$P$ is in red and the dual~$P^*$ is outlined by a black dashed line. The cubes~$[-\delta, \delta]^d \pm e_i$ containing~$x^{(i^{\pm})}$, which are blue dots, are outlined by a black dotted line.}
\end{figure}

\begin{lemma} \label{lemma:dual-polytope-bound}
	There exists a constant~$\delta'(d) \in (0,1)$ such that the following holds for every selection of~$x^{(i^{\pm})} \in [-\delta', \delta']^d \pm e_i$ for~$i = 1, \ldots, d$.
	The dual of the convex polytope~$P := \conv(x^{(1^+)}, x^{(1^-)}, \ldots, x^{(d^+)}, x^{(d^-)})$ defined by~$P^* := \{ y \in \Rd: x \cdot y \leq 1 \mbox{ for all~$x \in P$}\}$ is bounded by~$P^* \subset B_{2 \sqrt{d}}$. 
\end{lemma}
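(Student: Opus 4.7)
The plan is to reduce the claim to a containment statement $P \supset B_r(0)$ for $r = 1/(2\sqrt d)$, and then verify this by a perturbation argument around the standard cross-polytope.

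First I would invoke the following standard consequence of polar duality: if a closed convex set $K$ satisfies $B_r(0) \subset K$ for some $r > 0$, then $K^* \subset B_{1/r}(0)$. Indeed, for any $y \in \mathbb R^d$ with $\|y\| > 1/r$, the point $x = ry/\|y\|$ lies in $B_r(0) \subset K$ and satisfies $x \cdot y = r\|y\| > 1$, so $y \notin K^*$. Applying this with $r = 1/(2\sqrt d)$ reduces the lemma to showing that $B_{1/(2\sqrt d)}(0) \subset P$ whenever $\delta'$ is sufficiently small (depending only on $d$).

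Next I would treat the unperturbed case $\delta' = 0$. There the vertices are exactly $\pm e_1, \ldots, \pm e_d$ and $P$ is the standard cross-polytope $\{x \in \mathbb R^d : \sum_i |x_i| \leq 1\}$. Its $2^d$ facets are the hyperplanes $\{x : \epsilon \cdot x = 1\}$ for $\epsilon \in \{-1,+1\}^d$, each of which has distance exactly $1/\sqrt d$ from the origin. Consequently $B_{1/\sqrt d}(0) \subset P$ in this case, which is strictly better than what we need.

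For the perturbation step, observe that for every choice of signs $\epsilon \in \{-1,+1\}^d$, the $d$ points $x^{(1^{\epsilon_1})}, \ldots, x^{(d^{\epsilon_d})}$ lie within $\delta'$ of the corresponding vertices $\epsilon_1 e_1, \ldots, \epsilon_d e_d$ of a facet of the cross-polytope. The affine hyperplane $H_\epsilon$ through these $d$ perturbed points is affinely independent for $\delta' < 1/2$ (say), and its unit normal and distance to the origin depend continuously on the $x^{(i^{\epsilon_i})}$, uniformly over the compact set $\prod_i([-\delta',\delta']^d + \epsilon_i e_i)$. Since at $\delta' = 0$ each such hyperplane sits at distance $1/\sqrt d$ from the origin on the far side from $0$, by continuity there exists $\delta'(d) \in (0,1)$ such that for all admissible perturbations and all $2^d$ sign choices, every $H_\epsilon$ sits at distance at least $1/(2\sqrt d)$ from the origin, with the origin on the ``inner'' side. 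The polytope $P$ equals the intersection of the closed half-spaces bounded by these $H_\epsilon$ and containing $0$, so $B_{1/(2\sqrt d)}(0) \subset P$, and the polar duality reduction completes the proof.

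The only mildly delicate step is the last one: one must check that for small $\delta'$ the combinatorial facet structure of $P$ really is inherited from the cross-polytope (so that the $H_\epsilon$ above are indeed its facets), which follows from the same continuity/compactness argument since the cross-polytope is a simplicial polytope in general position.
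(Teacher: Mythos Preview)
Your argument is correct and follows essentially the same route as the paper: reduce via polar duality to showing $B_{1/(2\sqrt d)} \subset P$, verify this containment for the unperturbed cross-polytope, then perturb. The paper's perturbation step is slightly cleaner than yours---it appeals directly to the Lipschitz continuity of the convex-hull map with respect to Hausdorff distance, which sidesteps any need to track the facet structure of $P$ or invoke combinatorial stability of simplicial polytopes.
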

\begin{proof}
	It suffices to show that the ball of radius~$R := (4 d)^{-\nicefrac12}$ centered at the origin is contained in the polytope,~$B_R \subset P$. Indeed, 
	by the definition of dual, we have that~$P^* \subset B_R^*$ and since~$B_R^* = B_{R^{-1}}$, this would show~$P^* \subset B_{2 \sqrt{d}}$.

	To see that~$B_R \subset P$, we first observe, by the Cauchy-Schwartz inequality, that the ball of radius~$d^{-\nicefrac12}$ is contained in
	the convex hull of~$ \conv(\pm e_1, \ldots, \pm e_d)$; indeed, if~$z \in B_1$, 
	\[
	\left( \sum_{i=1}^d |z_i| \right)^2 \leq  d \sum_{i=1}^d z_i^2  \leq  d   \, .  
	\]
	Since the map from the set of points to their convex hull 
	is Lipschitz continuous with respect to the Hausdorff distance, 
	this shows that for small enough~$\delta'$, depending only
	on~$d$, the ball of radius~$(4 d)^{-\nicefrac12}$ is contained in~$P$. This completes the proof.
\end{proof}

\begin{proof}[Proof of Lemma~\ref{lemma:geometric-constraint}]
	Proof of (a). Let~$s \in S \cap U$. Let~$s'$ be the nearest, in Euclidean distance, point in~$S$ to~$s$. Let~$x \in r \Zd \cap U$ be such that~$\|s - x\| \leq r$. Then, since there can be no other points in~$x + [-2r, 2r]^d$, we must have~$\|s-s'\| > r$. 
	
	Proof of (b). Let~$s \in S \cap U$ and let~$\delta := c \delta'$ where~$\delta'$ is as in Lemma~\ref{lemma:dual-polytope-bound} and~$c(d) \in (0,1)$ is a constant to be determined below. By rescaling and translation, we may suppose~$s = 0$ and~$k = \frac12$, and that there are, for each~$i \in \{1, \ldots, d\}$, points~$s^{(i^{\pm})} \in \{[- c d \delta', cd   \delta']^d \pm \frac12 e_i\} \cap S$.
	Denote this set of points by~$S'$. Since~$s = 0$, we can rewrite, for all~$s' \in S'$ and~$y \in \Rd$, the halfspace
	  	\[
	  	 (y - \frac{1}{2}(s + s')) \cdot (s-s')  \geq 0 \iff - y \cdot s' + \frac12 \|s'\|^2 \geq 0 \iff y \cdot \left( 2 \frac{s'}{\|s'\|^2} \right) \leq 1 \,  ,
	  	\]
	  	so that by~\eqref{eq:half-space-definition-of-cells} and the fact~$S' \subset S$, we have
	\[
	\mathcal{C}_s \subset \bigcap_{s' \in S'} \left\{ y \in \Rd: y \cdot \left( 2 \frac{s'}{\|s'\|^2} \right) \leq 1  \right\} \, . 
	\]
	Define~$x^{(i^{\pm})} := 2 \frac{s^{(i^{\pm})}}{\|s^{(i^{\pm})}\|^2}$ and observe that for a sufficiently small choice of~$c$, we have $x^{(i^{\pm})} \in [-\delta', \delta']^d \pm e_i$ for each~$i \in \{1, \ldots, d\}$. Thus, Lemma~\ref{lemma:dual-polytope-bound} yields that~$\mathcal{C}_s \subset B_{2\sqrt{d}}$.  This completes the proof after undoing the scaling.
\end{proof}

It remains to show that the conditions of Lemma~\ref{lemma:geometric-constraint} are satisfied with probability one  as~$m \to \infty$
for particular choices of the grid.

\begin{proof}[Proof of Proposition~\ref{prop:min-max-edge-length}]
	We will show that the events in Lemma~\ref{lemma:geometric-constraint} are satisfied for
	\begin{equation}
		\label{eq:choice-of-k-and-r}
	k :=  m^{-1/(2 \beta^{+})} \quad \mbox{and} \quad  r:= m^{-8/\beta^-} \, , 
	\end{equation}
	for all large enough~$m$ for a slightly enlarged set~$U' := B_{4 m^{-1/(4\beta^+)}}(U)$.  For a finite set of points we write~$|\cdot|$ for the cardinality.  We use the fact that the number of points in a cube of side length~$s$, the random variable~$|\Lambda_m \cap [-s,s]^d|$, is Poisson with mean~$m \mu([-s,s]^d)$. 

	For the lower bound, let~$\delta(d)$ be as in Lemma~\ref{lemma:geometric-constraint} so that by~\eqref{eq:holder-assumption}, 
	for all~$m$ sufficiently large, 
	\[
	m \mu([-\delta k,\delta k]^d) \geq  m^{\nicefrac13} \, , 
	\]
	and so
	\[
	\P[|\Lambda_m \cap [-\delta k,\delta k]| = 0] = \exp(-m \mu([-\delta k,\delta k]^d)) \leq \exp(-m^{\nicefrac13})
	\]
	which shows, after taking a union bound,
	\begin{equation}
		\label{eq:lower-bound-cube}
		\P\left[ \bigcup_{y \in k \Zd \cap B_{2 k}(U')} \left\{ \mbox{the cube~$\{y +[-\delta k,\delta k]^d\}$ contains no points of~$\Lambda_m$}\right\} \right] \leq \exp(-m^{\nicefrac14}) \, .
	\end{equation}

	For the upper bound, we first observe that if~$X \sim \mbox{Poisson}(\lambda)$, then, for all~$\lambda$ sufficiently small,
	\begin{equation}
		\label{eq:poisson-bound}
	\P[X \geq 2] = 1 - \P[X=0]  - \P[X=1]  = 1 - \exp(-\lambda) - \lambda \exp(-\lambda) \leq \lambda^2 \, . 
	\end{equation}
	Fix~$N \geq 1$ to be selected below, and let
	\[
	\beta^- = b_0 < \cdots < b_N = \beta^+
	\]
	be a partition of the interval~$[\beta^-, \beta^+]$. For each~$j \in \{1, \ldots, N\}$, let~$R_j$
	be the set of grid boxes of radius~$2 r$ with~$\mu$-volume between~$(2r)^{b_{j}}$ and~$(2r)^{b_{j-1}}$, 
	\[
	R_j := \{ B := y + [-2 r, 2 r]^d  : y \in r \Z^d \cap U' \, \, \mbox{and} \, \, \mu(B) \in [(2 r)^{b_{j}}, (2 r)^{b_{j-1}}]  \} \, . 
	\]
	Observe that by~\eqref{eq:holder-assumption}, for every~$j \in \{1, \ldots, N\}$ we have 
	\begin{equation}
		\label{eq:bound-on-cardinality-of-rj}
		|R_j| \leq r^{-b_{j}} d^d \mu(U') \, ,
	\end{equation}
	and also, 
	\begin{equation*}
		|\Lambda_m \cap B| \sim \mbox{Poisson}(\lambda_j) \quad \forall B \in R_j \quad \mbox{with~$\lambda_j \leq m 2^{\beta^+} r^{b_{j-1}}$} \, .
	\end{equation*}
	 By the previous display and~\eqref{eq:poisson-bound} we have, for all~$j \in \{1, \ldots, N\}$, 
	\[
	\P[	|\Lambda_m \cap B| \geq 2] \leq m^2 4^{\beta^+} r^{2 b_{j-1}} \qquad \forall B \in R_j 
	\]
	and so by a union bound and~\eqref{eq:bound-on-cardinality-of-rj}, 
	\[
		\P\left[ \bigcup_{B \in R_j} \left\{\mbox{the cube~$B$ contains at least two points of~$\Lambda_m$}\right\} \right] \leq  m^2 4^{\beta^+} r^{2 b_{j-1} - b_{j}} \mu(U') \, . 
	\]
	By taking~$N$ sufficiently large, depending only on~$\beta^{+}$ and~$\beta^-$ so that~$b_{j} - b_{j-1} \leq \beta^-/2$, we have~$m^2 r^{2 b_{j-1} - b_j} \leq m^2 r^{\beta^-/2}$. Consequently, by the previous display and a union bound over~$j$, we have 
	\begin{align*}
		&\P\left[ \bigcup_{y \in r \Zd \cap U'} \left\{\mbox{the cube~$\{y +[-2 r, 2r]^d\}$ contains at least two points of~$\Lambda_m$} \right\} \right] \\
		&\qquad \leq  N d^d 4^{\beta^+} m^2 r^{\beta^-/2} \mu(U') \leq N d^d 4^{\beta^+} m^{-2} \mu(U') \, ,
	\end{align*}
	and so for~$m$ sufficiently large
	\begin{equation}
	\label{eq:upper-bound-cube}
			\P\left[ \bigcup_{y \in r \Zd \cap U'} \left\{\mbox{the cube~$\{y +[-2 r,2 r]^d\}$ contains at least two points of~$\Lambda_m$} \right\} \right]  \leq m^{-\nicefrac32}  \, . 
	\end{equation}
	Combining~\eqref{eq:upper-bound-cube} and~\eqref{eq:lower-bound-cube} together with the Borel-Cantelli lemma
	shows that the hypotheses required for Lemma~\ref{lemma:geometric-constraint} are satisfied almost surely 
	for all~$m$ sufficiently large for~$U'$. Since every such cell which intersects~$U$ must be centered at a grid point in~$U'$,  this completes the proof of~\eqref{eq:cells-are-contained-in-balls}.

	This also shows that~$\eps_m := \sup_{v \in \VVor(\Lambda_m)} \diam(P_v) \leq m^{-1/(3\beta^+)}$ for all large~$m$ and that~\eqref{eq:smallness} is satisfied almost surely.
	To see that Hypothesis~\ref{item:vol} is satisfied, we observe that~\eqref{eq:cells-are-contained-in-balls} implies that
	the minimum volume of a cell in~$\G_m$ is~$m^{-d 8/\beta^-} \geq \eps_m^{d (3 \beta^+) 8/\beta^- } $. 
\end{proof}

\section{Counterexample for simple random walk} \label{sec:counter-example}
It is straightforward to construct, in all dimensions, sphere packings of \emph{unbounded} degree upon which simple random walk with unit conductance does not converge to Brownian motion modulo time change, see, for example,~\cite[Example 4.8]{NachmiasStFlour}. 
In this section we give a \emph{bounded} degree example in dimensions at least three. This is similar to the potential examples discussed in~\cite[Section 1.5]{GMSInvariance}.
We are not sure whether such a counterexample exists in two dimensions.

\begin{figure}
	\includegraphics[width=0.45\textwidth]{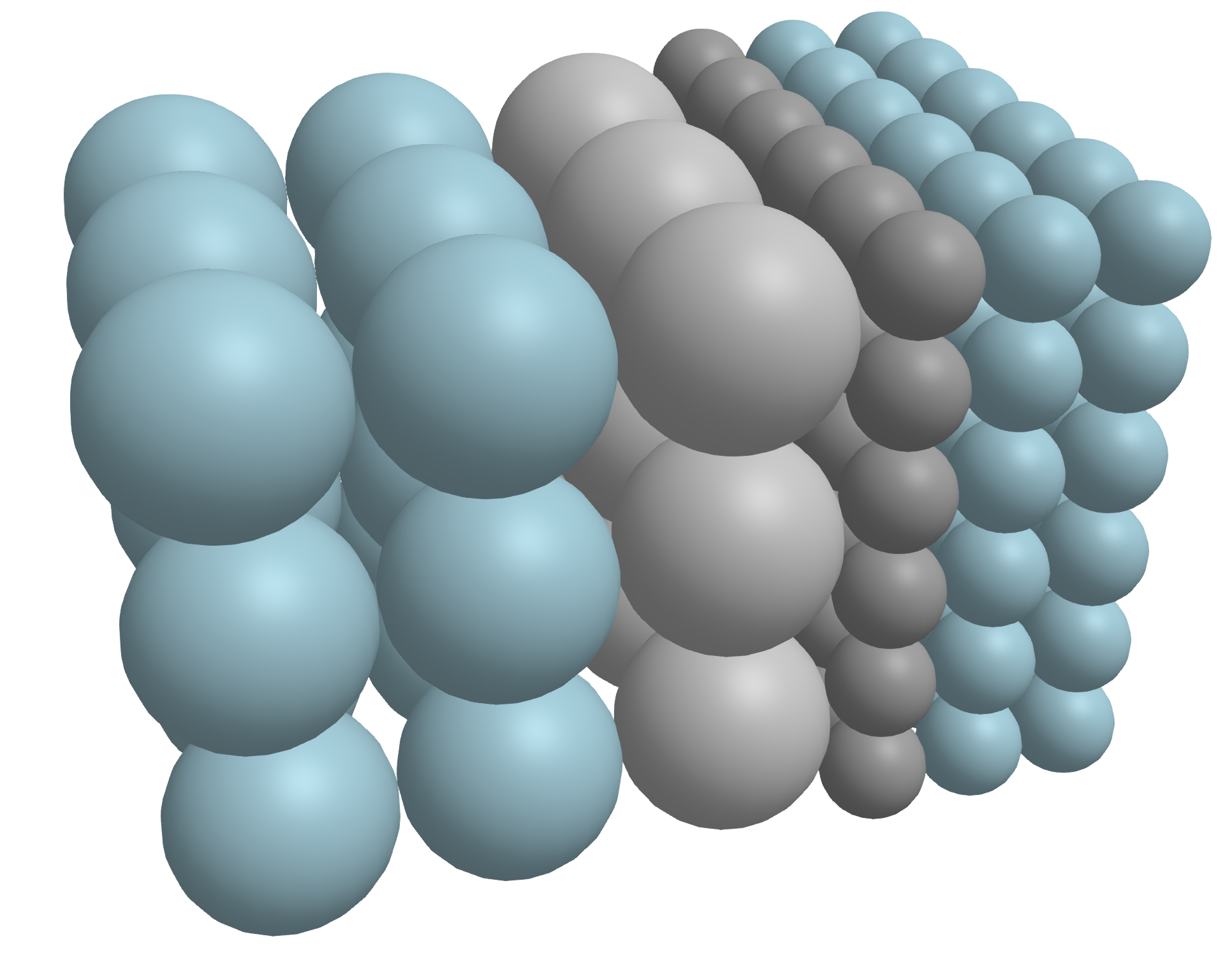}
	\quad 
	\includegraphics[width=0.35\textwidth]{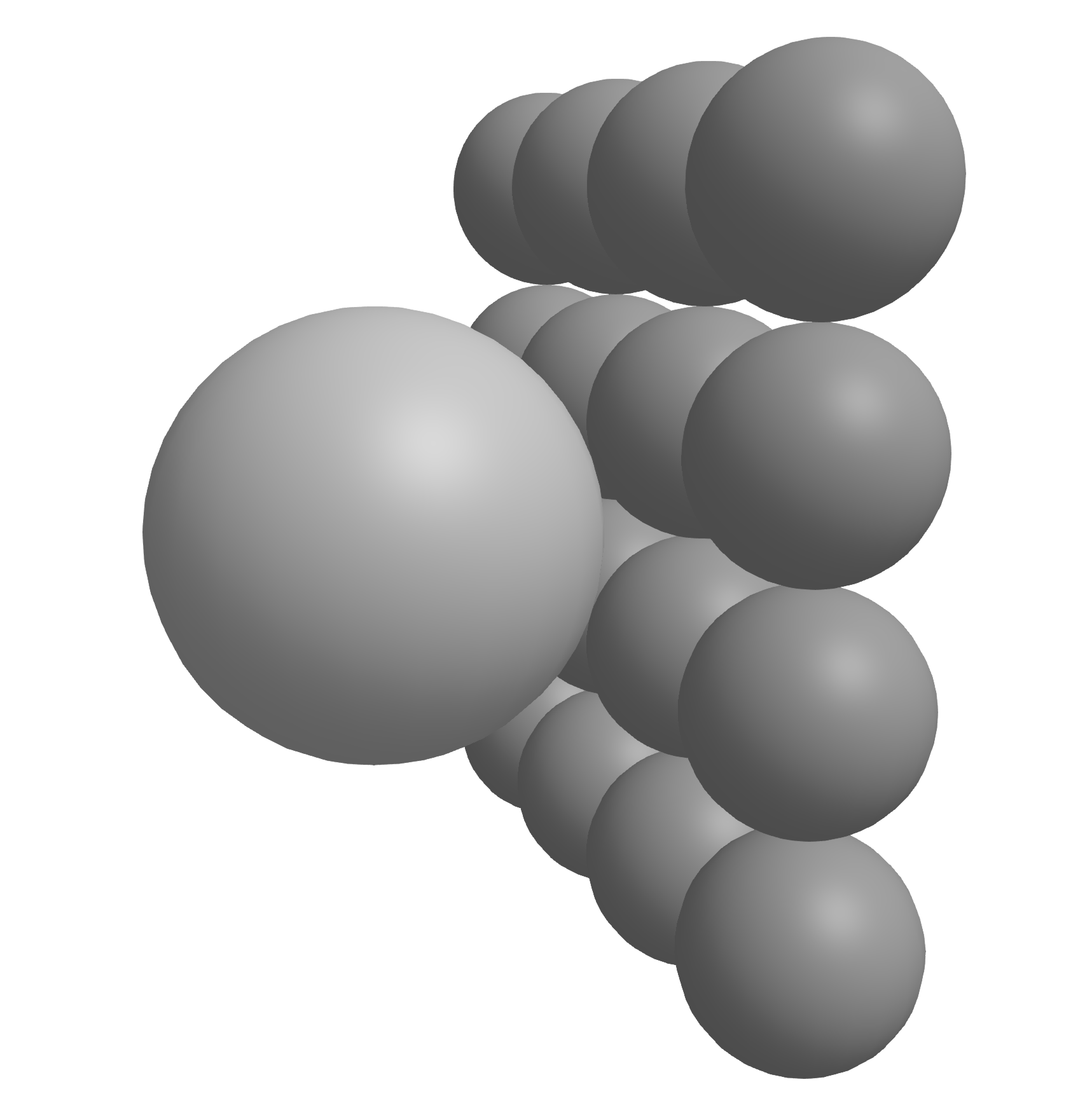}
	\caption{On the left, part of the sphere packing given in Theorem~\ref{theorem:counter-example} for~$d=3$. 
		On the right, a big middle sphere and some nearby middle small spheres.
		Big  (resp.\ small) middles spheres are in light (resp.\ dark) gray and the other spheres are in light blue.}
	\label{fig:counter-example}
\end{figure}

\begin{theorem} \label{theorem:counter-example}
	For all~$d \geq 3$, there exists a sphere packing of~$\R^d$ upon which simple random 
	walk with unit conductance does not converge to Brownian motion modulo time parametrization in the scaling limit. Moreover, the spheres all have radii either $1$ or $1/2$ and are tangent to a bounded number of other spheres.
\end{theorem}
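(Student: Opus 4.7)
The plan is to construct a $\Zd$-periodic sphere packing of $\Rd$ using only spheres of radii $1$ and $1/2$, whose tangency graph has bounded degree, and such that simple random walk with unit conductance on this graph has a non-zero asymptotic velocity $\vec v$. Once this is in place, the functional central limit theorem for Markov chains on periodic graphs implies that $n^{-1} X_{\lfloor n t \rfloor} \to t \vec v$ in probability as $n \to \infty$; the rescaled trace therefore concentrates on a straight line segment, which is manifestly not the trace of Brownian motion modulo time parameterization.

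The construction, sketched in Figure~\ref{fig:counter-example}, places alternating ``big middle'' (radius $1$) and ``small middle'' (radius $1/2$) spheres along a preferred axis, together with auxiliary spheres arranged so that each big middle sphere is tangent to many small auxiliary spheres on one side but comparatively few spheres on the other, while each small middle sphere has a more symmetric, smaller neighborhood. The extra room available in $d \geq 3$ allows us to insert the auxiliary spheres and pack them into a valid sphere packing without creating unbounded degree. Bounded degree is then standard: any sphere in a packing of $\Rd$ by balls of radii in $\{1/2, 1\}$ is tangent to at most $C(d)$ others, by a simple volume argument on an annulus around its center.

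The drift is computed as follows. Passing to the quotient by the $\Zd$-action, SRW projects to an irreducible Markov chain on a finite graph, whose stationary distribution $\pi$ is proportional to the vertex degrees. The asymptotic velocity is
\[
\vec v \;=\; \sum_{v \in F} \pi(v)\, \frac{1}{\deg(v)} \sum_{w \sim v} (w - v),
\]
where $F$ is a fundamental domain. By the asymmetric placement of the auxiliary spheres, direct inspection of the finitely many terms shows that $\vec v$ has a non-zero component along the preferred axis. The standard FCLT for ergodic periodic Markov chains then yields $n^{-1} X_{\lfloor n t \rfloor}\to t \vec v$ uniformly in $t$ on compact intervals, in probability, and finishes the proof.

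The main obstacle is the explicit construction in the first step: one must exhibit a genuine sphere packing (pairwise disjoint interiors) of $\Rd$ that is simultaneously $\Zd$-periodic, uses only the two radii $1$ and $1/2$, has bounded degree, and produces a net drift for SRW. The remaining pieces --- the degree bound, the drift computation, and the scaling-limit conclusion --- are fairly routine once the construction is in hand.
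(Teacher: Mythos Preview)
Your approach has a genuine gap: simple random walk on a $\Zd$-periodic graph \emph{always} has zero asymptotic velocity, so the drift you are trying to produce cannot exist. Indeed, with $\pi(v)\propto\deg(v)$ your own formula gives
\[
\vec v \;=\; \frac{1}{2|E|}\sum_{v\in F}\sum_{w\sim v}(w-v),
\]
which is a sum over all directed edges in the quotient; each undirected edge contributes two opposite displacements and the total vanishes. More conceptually, SRW is reversible with respect to the degree measure, and for any reversible periodic chain one has $\vec v=\sum_{v,w}\pi(v)p(v,w)(w-v)=-\vec v$ by swapping $v\leftrightarrow w$. Thus $n^{-1}X_{\lfloor nt\rfloor}\to 0$, which says nothing about the trace, and your ballistic-scaling conclusion collapses.

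The paper avoids this obstruction by using a packing that is \emph{not} periodic in the $x_1$-direction: unit spheres fill the half-space $x_1\le 0$ and half-radius spheres fill $x_1>0$, with an explicit interface where each big ``middle'' sphere touches four small ``middle'' spheres but each small middle sphere touches only one big middle sphere. A direct one-dimensional gambler's-ruin calculation then shows that SRW started at the interface hits $\{x_1=2N\}$ before $\{x_1=-2N\}$ with probability tending to $2/3$, contradicting the negation symmetry of Brownian motion. If you want to salvage a periodic construction, the correct target is not a drift but an \emph{anisotropic} diffusive limit: arrange the packing so that the effective covariance matrix $\Sigma$ in the invariance principle $n^{-1/2}X_{\lfloor nt\rfloor}\Rightarrow \Sigma^{1/2}B_t$ is not a scalar multiple of the identity. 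That would also preclude convergence to standard Brownian motion modulo time change, but it requires a different computation than the one you outlined.
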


The sphere packing we construct will force the random walk to lack negation symmetry at large scales, precluding it from converging to Brownian motion.

\begin{figure}
	\includegraphics[width=0.5\textwidth]{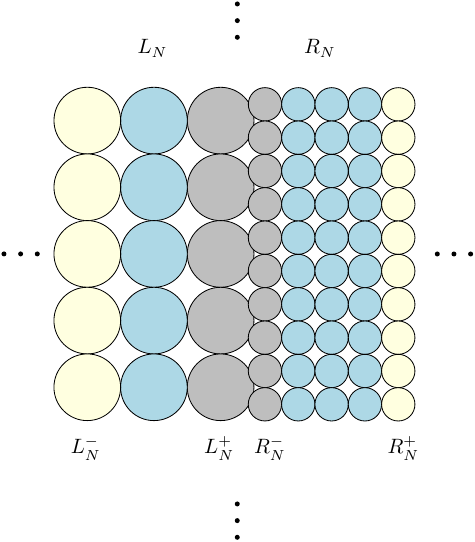}
	\caption{A top-down view of the sphere packing given in Theorem~\ref{theorem:counter-example} for~$d=3$.
		The spheres in~$L_N, R_N$ (defined in~\eqref{eq:def-of-ln}) are in light blue, the spheres in~$L_N^{-}, R_N^{+}$ are in light yellow, 
		the spheres in~$L_N^{+}, R_N^{-}$ are in gray (defined in~\eqref{eq:def-of-lnp}).  Here~$N=2$. Note that the small middle spheres are slightly above small big spheres so they are not overlapping.}
	\label{fig:counter-example-topdown}
\end{figure}

\begin{proof}[Proof of Theorem~\ref{theorem:counter-example}]
	We start with the construction of the sphere packing, part of which is illustrated in Figure~\ref{fig:counter-example}. 
	\begin{itemize}
		\item 	At each vertex~$x$ on~$2\Z^d$ for~$x_1 \leq 0$, place a \emph{big} sphere of radius~$1$. 
		\item 	The big spheres centered at~$x \in \Zd$ for~$x_1 = 0$ are the big~\emph{middle} spheres. 
		\item   Place \emph{small} spheres of radius~$\frac12$ at~$ (\frac{\sqrt{7}}{2}, \frac12, \frac12, 0, \ldots, 0) + y$ for~$y \in \Zd$ with~$y_1 \geq 0$.
		\item The small spheres with centers~$x$ for~$x_1 = \frac{\sqrt{7}}{2}$ are~\emph{small middle} spheres. 
	\end{itemize}

	Two spheres are adjacent to each other if and only if the distance between their centers 
	is equal to the sum of the two radii. Thus, non-middle spheres are adjacent to~$2 d$ other spheres of the same size.  
	Also, by construction, each big middle sphere is adjacent to~$2d-1$ big spheres and~$4$ small middle spheres
	and each small middle sphere is adjacent to~$2d-1$ small spheres and 1 big middle sphere, see Figure~\ref{fig:counter-example}.  Indeed, the big middle sphere with center~$(0, x_2, \ldots, x_d)$
	is adjacent to the four small middle spheres with centers~$(\frac{\sqrt{7}}{2}, x_2\pm \frac12, x_3\pm \frac12, x_4, \ldots, x_d)$ as
	\[
	\sqrt{ \left(\frac{\sqrt{7}}{2}\right)^2 + \left(\pm \frac12\right)^2 + \left(\pm \frac12\right)^2} = 3/2 = 1 + 1/2 \, . 
	\]
	Denote the sphere packing by~$\mathcal{S}$ and identify spheres in~$\mathcal{S}$ by their centers. 
	
	We now argue that simple random walk (SRW) on the tangency graph of~$\mathcal{S}$ is more likely to go right than to go left when started in the middle. To that end, we consider the~$x_1$ coordinate of the walk. Fix~$N \geq 2$ and divide space
	according to Figure~\ref{fig:counter-example-topdown} as follows:
	\begin{equation}
		\label{eq:def-of-ln}
		L_N = \{ x \in \mathcal{S} : -2 N < x_1  < 0 \} \quad \mbox{and} \quad R_N =  \{ x \in \mathcal{S}: \frac{\sqrt{7}}{2} < x_1 < 2N + \frac{\sqrt{7}}{2} \} \,  , 
	\end{equation}
	and consider their boundaries 
	\begin{equation}
		\label{eq:def-of-lnp}
		\begin{aligned}
			L_N^{-} &= \{ x \in \mathcal{S}: x_1 = -2N \} \qquad L_N^{+} = \{ x \in \mathcal{S}: x_1 = 0 \} \\
			R_N^{-} &= \{ x \in \mathcal{S}: x_1 = \frac{\sqrt{7}}{2}  \} \qquad R_N^{+} = \{ x \in \mathcal{S}: x_1 = 2N + \frac{\sqrt{7}}{2} \} \, .
		\end{aligned}
	\end{equation}
	A standard estimate for simple random walk on $\Zd$ (obtained by considering the harmonic function~$x \mapsto x_1$) gives 
	\[
	\P\left[ \mbox{SRW started at~$\{x_1 = -2\}$ hits~$L_N^{-}$ before~$L_N^+$}\right] = \frac{1}{N}
	\]
	and
	\[
	\P\left[ \mbox{SRW started at~$R_N^{-}$ hits~$L_N^{+}$ before~$R_N^+$}\right] = \frac{2N}{2N+1} \, . 
	\]
	Putting these estimates together and denoting 
	\[
	p := \P[\mbox{SRW started at~$L_N^+$ hits~$L_N^{-}$ before~$R_N^+$}]
	\]
	we have, by conditioning on the first exit from~$L_N^+$, using the fact that each big middle sphere is adjacent to 4 small middle spheres,
	\begin{align*}
		p &=  \P\left[ \mbox{SRW started at~$\{x_1 = -2\}$ hits~$L_N^{-}$ before~$L_N^+$}\right] \frac{1}{5}  \\
		&\quad+ p \P\left[ \mbox{SRW started at~$\{x_1 = -2\}$ hits~$L_N^{+}$ before~$L_N^{-}$}\right] \frac{1}{5} \\
		&\quad+  p \P\left[ \mbox{SRW started at~$R_N^{-}$ hits~$L_N^{+}$ before~$R_N^+$}\right] \frac{4}{5} \\
		&=  \frac{1}{5}\left( \frac{1}{N} + p \frac{N-1}{N}\right) + \frac{4}{5} p \frac{2N}{2 N +1}  \, . 
	\end{align*}
	Solving the above equation yields that 
	\[
	p = \frac{2N+ 1}{6N + 1} \, , 
	\]
	in particular, 
	\[
	\lim_{N \to \infty} \P[\mbox{SRW started at~$(0,\ldots,0)$ hits~$R_N^{+}$ before~$L_N^{-}$}]  = \frac{2}{3} \, .  
	\]
	Since Brownian motion satisfies negation symmetry, this is incompatible with the SRW converging to Brownian motion, modulo time parameterization.  
\end{proof}

{
  \bibliographystyle{plainnat}
  \bibliography{refs}
}

\end{document}